\newcommand{\RR}{{\mathbb R}}
\newcommand{\HH}{{\mathbb H}}
\newcommand{\EE}{{\mathbb E}}
\newcommand{\LL}{{\mathbb L}}
\newcommand{\PP}{{\mathbb P}}
\newcommand{\WW}{{\mathbb W}}
\newcommand{\VV}{{\mathbb V}}
 \newcommand{\bu}{{\bf u}}
 \newcommand{\bU}{{\bf U}}
 \newcommand{\bQ}{{\bf Q}^0_h}
 \newcommand{\bE}{{\bf E}}
\newcommand{\bPhi}{{\bf \Phi}}
\newcommand{\bG}{{\bf G}}
\numberwithin{equation}{section}
\newtheorem{theorem}{Theorem}[section]
\newtheorem{defn}[theorem]{Definition}
\newtheorem{lemma}[theorem]{Lemma}
\newtheorem{remark}[theorem]{Remark}
\newtheorem{prop}[theorem]{Proposition}
\newtheorem{coro}[theorem]{Corollary}
\begin{document}
\title[Strong Convergence for 2D NS  space-time  Euler schemes]
{Space-time  Euler discretization schemes  \\ for the stochastic 2D Navier-Stokes equations}

%%%%%%%%%%%%%%%%%%%
\author[H. Bessaih]{Hakima Bessaih}
\address{University of Wyoming, Department of Mathematics and Statistics, Dept. 3036, 1000
East University Avenue, Laramie WY 82071, United States}
\email{ bessaih@uwyo.edu}

\author[A. Millet]{ Annie Millet}
\address{SAMM, EA 4543,
Universit\'e Paris 1 Panth\'eon Sorbonne, 90 Rue de
Tolbiac, 75634 Paris Cedex France {\it and} Laboratoire de
Probabilit\'es, Statistique et Mod\'elisation, UMR 8001, 
  Universit\'es Paris~6-Paris~7} 
\email{amillet@univ-paris1.fr}

\thanks{  Hakima Bessaih was partially supported by Simons Foundation Grant 582264 }  

\subjclass[2000]{ Primary 60H15, 60H35; Secondary 76D06, 76M35.} 

\keywords{Stochastic Navier-Sokes equations, Euler schemes, finite elements, 
strong convergence, implicit time discretization,  exponential moments}

\begin{abstract} 
 We prove that the implicit time  Euler scheme coupled with finite elements space discretization for the 2D Navier-Stokes 
equations on the torus 
subject to a random perturbation converges in $L^2(\Omega)$, and describe the rate of convergence for an $H^1$-valued initial condition.
This refines previous results which only established the convergence in probability of these numerical approximations. Using 
exponential  moment estimates of
the solution of the stochastic Navier-Stokes equations and convergence of a   localized scheme,  we can prove strong convergence of 
this space-time approximation.
The  speed of the $L^2(\Omega)$-convergence depends on the diffusion coefficient and on the viscosity parameter. In case of Scott-Vogelius mixed
elements and for an additive noise, the convergence is polynomial.   
\end{abstract}

\maketitle

%%%%%%%%%%%%%%%%%%%%%%%%%%%%%%%%%%%%%%%%%%%%%%%%%%%%%%%%
\section{Introduction}\label{s1} \smallskip

Numerical schemes and algorithms have been introduced to best approximate and construct solutions for PDEs. 
A similar approach has started to emerge for stochastic models and in particular SPDEs. Many algorithms  based on either finite difference, finite element  or spectral Galerkin methods 
(for the space discretization), and on either Euler schemes, Crank-Nicolson or Runge-Kutta schemes (for the time discretization) 
have been introduced for both the linear and nonlinear cases. Their rates of convergence have been widely investigated.
The literature on numerical analysis for SPDEs is now very extensive. 
%There is an extensive literature, see 
In \cite{Ben1}  the models are either linear, have global Lipschitz properties,
 or more generally some monotonicity property. In this case the convergence is proven to be in mean square. 
When nonlinearities are involved that are not of Lipschitz or monotone type,  then a rate of  convergence 
 in mean square is more difficult to  obtain.  
  Indeed, because of the stochastic perturbation, one may not use the Gronwall lemma after taking the expectation of the
   error bound   
  since it involves a nonlinear term which is often  quadratic. 
  One way to get around this problem  is to localize the nonlinear term in order to get a linear inequality, and then use the Gronwall lemma. 
  This gives rise to  a rate of   convergence in probability, that was first introduced by J.~Printems \cite{Pri}. 
  
  In this paper, we focus on the stochastic 2D Navier-Stokes equations. Our goal is to implement a space-time discretization scheme, 
  prove its $L^2(\Omega)$ convergence, and get appropriate rates of convergence. 
  
  Our model is given by 
\begin{align} \label{2D-NS}
 \partial_t u - \nu \Delta u + (u\cdot \nabla) u + \nabla \pi & = G(u) dW\quad \mbox{\rm in } \quad (0,T)\times D,\\
 \mbox{\rm div }u&=0 \quad \mbox{\rm in } \quad (0,T)\times D,
 \end{align}
 where  $D= [0,L]^2$, $T>0$. The process % and  $D_T=(0,T)\times D$. 
$u: \Omega\times (0,T)\times D  \to \RR^2$  is  the velocity field 
 with initial condition $u_0\in \WW^{1,2}(D)$,  periodic boundary conditions $u(t,x+L v_i)=u(t,x)$ on  $(0,T)\times \partial D$, 
 where $v_i$, $i=1,2$
 denotes the canonical basis of $\RR^2$, and $\pi : \Omega\times (0,T)\times D  \to \RR$  is  the  pressure. 
 Here $G$ is a diffusion coefficient with global Lipschitz conditions.
Let $(\Omega, {\mathcal F}, ({\mathcal F}_t),  \PP)$ denote a filtered probability space and $W$ be a Wiener process to be precisely defined later. 
Unique global weak and strong solutions (in the PDE sense) for \eqref{2D-NS} are constructed for both additive and multiplicative noise, 
and without being exhaustive, we refer to \cite{Breckner,  ChuMil}.  

Various space-time numerical schemes have been studied for the stochastic Navier-Stokes equations with a multiplicative noise \eqref{2D-NS}.
 We refer to \cite{BrCaPr,Dor, Breckner, CarPro, BreDog}, where convergence in probability is stated with various rates of convergence. 
 As stated previously, the main tool to get the convergence in probability  is the localization of the nonlinear term over a space of large probability.
  Our previous paper \cite{Be-Mi_time} describes most of the results contained in these papers. 
  The first result on an strong, that is $L^2(\Omega)$-convergence, rate is proved in  \cite{Be-Mi_time} for an $H^1$-valued initial condition. 
  The method is based on the fact that the solution (and the scheme) have finite  moments (bounded uniformly on the mesh). 
   When the noise is additive, the solution has exponential  moments;  we used this property in \cite{Be-Mi_time} to  get an explicit polynomial strong rate
   of convergence. 
   Let us mention the result by Duan-Yang \cite{DuaYan}, where  strong convergence results are stated, and proved using semi-group 
   techniques; in that paper  the initial condition $u_0\in H^2(D)$  is more regular than in our setting. 
   \smallskip
    
In our previous paper \cite{Be-Mi_time}, only a time discretization scheme for  the model \eqref{2D-NS} was studied. 
 In the current paper, we implement a space-time discretization. More specifically, we have  an implicit time discretization
coupled with a finite elements spatial discretization. A similar algorithm has been studied in  \cite{CarPro},  where  convergence in probability was proven.
  Using the tools introduced in our previous paper \cite{Be-Mi_time},
  which are formalized in some general framework, we improve these results by proving rates of  $L^2(\Omega)$-convergence for a space-time scheme. 
 
  This paper  deals with  the  fully implicit % semi-implicit
   time Euler scheme with constant time mesh $\frac{T}{N}$  and the finite elements discretization with
  space parameter $h$ used in \cite{CarPro}. 
  For general finite elements, in order to ensure stability, we have to deal with the velocity and the pressure which satisfy the discrete LBB condition.
 As in the results on convergence in probability proved in \cite{CarPro},  we compare the space-time discretization $\bU^l$
 and the fully implicit time discretization $\bu^l$ in $\LL^2:=\LL^2(D)$  uniformly on the time grid $\{\l \frac{T}{N}\}_{l=0, \cdots,N}$. 
 The strong  speed of convergence we obtain 
  for the velocity  includes a term containing the pressure. 
  %Under either some strong assumption on the
  %diffusion coefficient or a relation between the time mesh and the space parameter, we can avoid this pressure term in the description of that speed.
  
  The proof relies first on a  $L^2(\Omega)$ convergence result of $\max_{1\leq l\leq N} 1_{\Omega_{l-1}(M)} |\bu^l - \bU^l|_{\LL^2}^2$ 
  localized by a set $\tilde{\Omega}_l(M)$. On that set,  some power  of the $\WW^{1,2}$-norm of the time discretization  $\bu^l$ is controlled. 
  However, due to the fact that the classical non-linear term $[u.\nabla]u$ has to be coupled with $\frac{1}{2} [{\rm div}\, u] u$, for general
  finite elements the localization set is $\tilde{\Omega}_l(M):= \big\{ \max_{j=0, \cdots, l} \|\bu^j\|_{\WW^{1,2}}^4 \leq M\big\}$.  Since we can control
  moments of the $\LL^2$-norm of both discretizations $\bu^l$ and $\bU^l$, choosing the bound $M(N,h)$ in an appropriate way, we can prove
  a strong speed of convergence. In general, the error  contains a term including the pressure and its discretization. To get more precise
  results, we have to make some assumptions either on the pressure  or on the relation between $h$ and $T/N$, or to strengthen the assumptions
  on the diffusion coefficient.
  In case of a multiplicative stochastic perturbation, 
   for a constant divergence-free $\WW^{1,2}$-valued initial condition (or more generally for a Gaussian $\WW^{1,2}$-valued initial random variable), 
    the strong speed of 
  convergence of $\max_l |\bu^l - \bU^l|_{\LL^2}^2 $ is any negative power of $\ln( h^2 + T/N)$.  If the noise is additive the speed of
  convergence is $C \exp( c \sqrt{\ln(h^2 + T/N)})$. The fact that the convergence is much faster is related to the existence of exponential
  moments of $\max_{0\leq l\leq N} \|\bu^l\|_{\WW^{1,2}}^2$. However, the speed is not polynomial due
   to the fact that the localization involves the fourth
  power of the $\WW^{1,2}$ norm of the time scheme $\{\bu^l\}_l$. 
  
  In the particular case of divergence-free finite elements, such as the Scott-Vogelius mixed elements \cite{ScoVog, Zhang} the localization
  set is $\Omega_l(M):= \big\{ \max_{j=0, \cdots, l} \|\bu^j\|_{\WW^{1,2}}^2 \leq M\big\}$. If the speed of convergence is similar  to that in the case 
  of general finite elements (with a higher power of
  $\ln(h^2 + T/N)$), the situation is different for an additive stochastic perturbation.  As it was
  already observed in \cite{Be-Mi_time}, where the existence of exponential moments for $\sup_{s\leq T} \|u(s)\|_{\WW^{1,2}}^2$
  was used, the localization by the square of $\| \bu^l\|_{\WW^{1,2}}$,   and the existence of exponential moments for the time scheme 
   provide a ``polynomial" speed of
  convergence of $\max_{1\leq l\leq N} |\bu^l - \bU^l|_{\LL^2}^2$ in terms of $h^2+ T/N$. The exponent depends on the viscosity $\nu$ and on the
  ``strength" of the noise.  If the strength of the noise converges to 0, this exponent converges to 1. 
  
  In all cases, coupling the above results with the strong convergence of $\max_{1\leq l\leq N} \big| u(\frac{lT}{N}) - \bu^l \big|_{\LL^2}^2$ from
  \cite{Be-Mi_time}, we deduce the strong speed of convergence of our   space-time Euler scheme. Note that due to the parabolic feature of the
  Navier-Sokes equations, the coupling of $h^2$ and $T/N$ is to be expected. 
  \smallskip
  
  The paper is organized as follows. In section \ref{preliminary} we define precisely our model, the stochastic perturbation,  and recall
  classical results for the solution $u$ to the stochastic 2D Navier-Stokes equations. Section 3 describes the fully implicit time Euler
  scheme; it   recalls some bounds for moments proved in \cite{BrCaPr} and \cite{CarPro}, 
  as well as the strong speed of convergence of this time scheme
  proved in \cite{Be-Mi_time} for multiplicative and additive stochastic perturbations. It also recalls some results about averages of the
  time discretization of the pressure proved in \cite{CarPro}.  In section \ref{s4}, we introduce the finite elements, 
   recall the definition of $\bU^l$ from \cite{BrCaPr, CarPro}, as well as various bounds of moments
  for $\bU^l$. Some technical estimates about the non-linear terms are proved, among which terms using some  localization on  abstract subsets of $\Omega$.
  Choosing these subsets in an appropriate way and using a discrete version of Gronwall's lemma,
       we prove the  localized $L^2(\Omega)$-convergence of the error between space-time Euler scheme 
  $\bU^l$ and the
  fully implicit time scheme $\bu^l$  in section \ref{s4bis}.      In section \ref{framework} we introduce  a general framework
  to deduce the strong speed of convergence from localized $L^2(\Omega)$ estimates and moment bounds. 
  We then apply the general results to deduce the strong speed of convergence  of 
  $\max_{1\leq l\leq N}| \bU^l - u(lT/N)|_{\LL^2}^2$
  in the case of general finite elements,  and then in that of divergence-free ones in section \ref{s5}. Finally the existence of exponential moments for
   $\max_{1\leq l\leq N} \|\bu^l\|_{\WW^{1,2}}^2$ is proven in the Appendix, both for a deterministic and random initial condition.

   As usual, throughout the paper, $C$ will denote a constant which can change from one line to the next, and we let $C(a)$ denote  a constant
   depending on some parameter $a$.

\section{Notations and preliminary results}\label{preliminary} 
 In what follows, we will  consider velocity fields that have   mean zero  
 over  $[0,L]^2$. 
 Let ${\mathbb L}^p:=[L^p_{per}(D)]^2$ (resp. ${\mathbb W}^{k,p}:=[W^{k,p}_{per}(D)]^2$)  denote the usual Lebesgue 
 and Sobolev spaces of   periodic vector-valued functions   with mean zero over   $[0,L]^2$
endowed with the norms $|\cdot |_{\LL^p}$ (resp. $\|\cdot \|_{{\mathbb W}^{k,p}}$). 
 To ease notations, we will denote by $\|\; \cdot \; \|_k$ the ${\mathbb W}^{k,p}$-norm. 
  Let %$\LL^2_{per}$ denote the subset of $\LL^2$  periodic functions with mean zero over   $[0,L]^2$, and let
\begin{align*}
  H:= &\{ u\in \LL^2 \; : \; {\rm div }\;  u=0 \quad \mbox {\rm weakly in }\;  D \}, \qquad
  V:=  H \cap {\mathbb W}^{1,2}; 
  \end{align*}
   these are   separable  Hilbert spaces.  The space $H$ inherits its inner product  denoted by $(\cdot,\cdot)$ and its norm from $\LL^2$.
   The norm in $V$, inherited from ${\mathbb W}^{1,2}$, is denoted by $\| \cdot \|_V$.  
   Moreover,   let  $V'$ be the dual space of $V$ 
   with respect to the Gelfand triple,  
  $\langle\cdot,\cdot\rangle$ denotes the duality between $V'$ and $V$.    Let $A=- \Delta$ with its domain  
  $\mbox{\rm Dom}(A)={\mathbb W}^{2,2}\cap H$. 
 % Recall that the Poincar\'e inequality implies the existence of a positive constant $\tilde{c}$ such that
  %\begin{equation} 				\label{Poincare_H}
  %|u|_{L^2} \leq \tilde{c} |\nabla u|_{L^2}, \quad \forall u\in V. 
  %\end{equation}
  
  Let $b:[\WW^{1,2}]^3 \to \RR$ denote the trilinear map defined by 
  \[ b(u_1,u_2,u_3):=\int_D  \big(u_1(x)\cdot \nabla u_2(x)\big)\cdot u_3(x)\, dx, \]
  which by the incompressibility condition satisfies  
  \begin{equation} \label{B}
  b(u_1,u_2, u_3)=-b(u_1,u_3,u_2), \quad b(u_1,u_2,u_2)=0,\quad  \forall u_1\in V, \; \forall u_2,u_3\in \WW^{1,2}.
  \end{equation} 
  There exists a continuous bilinear map $B:V\times V \mapsto  V'$ such that
  \[ \langle B(u_1,u_2), u_3\rangle = b(u_1,u_2,u_3), \quad \mbox{\rm for all } \; u_i\in V, \; i=1,2,3.\]
 % The map  $B$ satisfies the following antisymmetry relations
 % \begin{equation} \label{B}
%  \langle B(u_1,u_2), u_3\rangle = - \langle B(u_1,u_3), u_2\rangle , \quad \langle B(u_1,u_2),  u_2\rangle = 0 
%  \qquad \mbox {\rm for all } \quad u_i\in V.
%  \end{equation}
 % Since the domain is the torus, we also have
  %\begin{equation}		\label{b(u,Au)}
  %B(u,u,Au)=0 \quad \mbox{\rm for } \, u\in Dom(A). 
  %\end{equation}
  Furthermore, the Gagliardo-Nirenberg inequality implies %that for $X:=H\cap \LL^4(D)$ we have 
  \begin{equation} \label{interpol}
  \|u\|_{\LL^4} ^2 \leq \bar{C} \; |u|_{\LL^2} \, |\nabla u|_{\LL^2} \leq \frac{\bar{C}}{2} \|u\|_{\WW^{1,2}}^2 %{{\mathbb W}^{1,2}}^2 
  \end{equation}
  for some positive constant $\bar{C}$. 
  %For $u\in V$ set $B(u):=B(u,u)$ and 
  Recall some well-known properties of $b$, which easily follow from the H\"older and Young inequalities:
   given any $\beta >0$   we have  for $X:=H\cap \LL^4(D)$
  \begin{align}
 & |  \langle B(u_1,u_2)\, ,\,  u_3\rangle | \leq \beta \|u_3\|_V^2 + \frac{1}{4 \beta} \|u_1\|_X\, \|u_2\|_X, \label{majB-X}\\
  & | \langle B(u_1,u_1) - B(u_2,u_2)\, , \, u_1-u_2\rangle | \leq \beta \|u_1-u_2\|_V^2 + C_\beta |u_1-u_2|_{\LL^2}^2 \|u_1\|_X^4, \label{B-B}
  \end{align}
  for  $u_i\in V$, $i=1,2,3$, %where 
 % \begin{equation} \label{def_Ceta}
 % C_\beta = \frac{\bar{C}^2 3^3}{4^4 \beta^3},
 % \end{equation}
  and $\bar{C}$ is defined by \eqref{interpol}.  Finally, recall that the Poincar\'e inequality implies the existence of a constant $\tilde{C}>0$
  such that if we set $\|| u\||^2 =: |\nabla u|_{\LL^2} + |Au|_{\LL^2}$ for $u\in {\rm Dom}\, (A)$, then
  \begin{equation} \label{Poincare}
  \|u\|_V^2 = |u|_{\LL^2}^2 + |\nabla u|_{\LL^2}^2 \leq \tilde{C} \||u\||^2. 
  \end{equation}
   Finally,  recall that since the domain $D$ is the torus with periodic boundary conditions, 
  the following identity involving the Stokes operator $A$ and the bilinear term holds (see e.g. \cite{Tem} Lemma 3.1):
  \begin{equation}   \label{A-B}
  \langle B(u,u), Au \rangle =0, \quad \forall u\in \mbox{\rm Dom}(A). 
  \end{equation}

   Let $K$ be a separable Hilbert space and $Q$ be a symmetric, positive trace-classe operator on $K$. 
  Let $(W(t), t\in [0,T])$ be a $K$-valued Wiener process with covariance operator $Q$, defined on the probability
  space $(\Omega, {\mathcal F},  ({\mathcal F}_t),  \PP)$. Let $\{e_j\}_{j\geq 1}$ denote an orthonormal basis of $K$
   made of eigenfunctions of $Q$ , with eigenvalues $\{q_j\}_{j\geq 1} $. Then 
   \[ W(t)=\sum_{j=1}^\infty \sqrt{q_j} \, \beta^j(t)\, e_j, \qquad \forall t\in [0,T],\]
   where $\{ \beta_j\}_{j\geq 1}$ are independent one-dimensional Brownian motions defined on $(\Omega, {\mathcal F},  ({\mathcal F}_t),  \PP)$.
  For technical reasons, we assume that the initial condition $u_0$ belongs to $L^p(\Omega ; V)$ for some $p\in [2,\infty]$, 
   and  only consider {\it strong solutions}  in the PDE sense. 
 Given two Hilbert spaces $H_1$ and $H_2$, let   ${\mathcal L}(H_1,H_2)$ denote the set of linear operators from $H_1$ to $H_2$. 
  The diffusion coefficient $G$ satisfies the following assumption:\\  
 
 { \bf Condition (G1)} Assume that %$G:[0,T]\times V \to L_2(K,V)$ 
  $G : \LL^2 \to {\mathcal L}(K,\WW^{1,2})$ is Lipschitz-continuous with linear growth. More precisely, 
   there exist positive constants $K_i$, $i=0,1$ %,2$,  
  and $L_1$  %and $L_2$  
  such that %for $u,v\in  \LL^2  $: % and $t\in [0,T]$:
  \begin{align}
  \| G(u)\|_{{\mathcal L}(K,\LL^2)}^2 &\leq K_0 + K_1 |u|_{\LL^2}^2, \label{growthG_L} \quad \forall u\in \LL^2,\\ 
\| G(u)\|_{{\mathcal L}(K,\WW^{1,2})}^2 &\leq K_0 + K_1 |u|_{\mathbb{W}^{1,2}}^2, \label{growthG_W} \quad \forall u\in \WW^{1,2},\\  
%+ K_2 \|u\|_V^2, \label{growthG_H}\\
 \|G(u)-G(v)\|_{{\mathcal L}(K,\LL^2)}^2 &\leq L_1 |u-v|_{\LL^2}^2, \qquad \forall u,v\in \LL^2 . %   + L_2 \|u-v\|_V^2 . 
 \label{LipG_W}
  \end{align} 
  
  We also consider the following variant of the assumptions on the diffusion coefficient $G$ in terms of divergence-free fields.\\
  {\bf Condition (G2)} 
Assume that %$G:[0,T]\times V \to L_2(K,V)$ 
  $G : H \to {\mathcal L}(K,V)$ is Lipschitz-continuous with linear growth. More precisely, 
   there exist positive constants $K_i$, $i=0,1$ %,2$,  
  and $L_1$  %and $L_2$  
  such that %for $u,v\in  \LL^2  $: % and $t\in [0,T]$:
  \begin{align}
  \| G(u)\|_{{\mathcal L}(K,H)}^2 &\leq K_0 + K_1 |u|_H^2, \label{growthG_H} \quad \forall u\in H, \\
\| G(u)\|_{{\mathcal L}(K,V)}^2 &\leq K_0 + K_1 |u|_V^2, \label{growthG_V} \quad \forall u\in V,\\  
%+ K_2 \|u\|_V^2, \label{growthG_H}\\
 \|G(u)-G(v)\|_{{\mathcal L}(K,H)}^2 &\leq L_1 |u-v|_{H}^2, \qquad \forall u,v\in H . %   + L_2 \|u-v\|_V^2 . 
 \label{LipG_H}
  \end{align} 
 Let ${\bf P}_H$ denote the Leray projection. Note that the conditions \eqref{growthG_L} and \eqref{growthG_W} 
 imply a linear growth of the trace of ${\bf P}_HG(u)\, Q\, G^*(u)
  {\bf P}_H$ and $A^{\frac{1}{2}} {\bf P}_H G(u) Q G^*(u)  {\bf P}_H A^{\frac{1}{2}}$.
  %  similar to assumption
  %(S3) in \cite{CarPro}; thanks to \eqref{growthG_L},  which only involves $\LL^2$,  no control of $K_3$ is required in \cite[(S3)]{CarPro}. 
  A similar result holds under condition {\bf (G2)}.

 We define a  strong solution of \eqref{2D-NS} as follows (see Definition 2.1 in \cite{CarPro}): 
 \begin{defn}
 We say that equation \eqref{2D-NS} has a strong  solution if:
 \begin{itemize}
 \item  $u $ is an adapted $V$-valued process,
 \item $\PP$ a.s. we have $u\in C([0,T];V) \cap L^2(0,T; \mbox{\rm Dom}(A))$,
 \item  $\PP\;  \mbox{\rm a.s.}$
  \begin{align*}
  \big(u(t), \phi\big) +& \nu \int_0^t \big( \nabla u(s), \nabla \phi\big) ds + \int_0^t \big\langle [u(s) \cdot \nabla]u(s), \phi\big\rangle ds \\
& =
 \big( u_0, \phi) + \int_0^t \big( \phi ,  G(u(s)) dW(s) \big)
 \end{align*}
for every $t\in [0,T]$ and every $\phi \in V$.
  \end{itemize}
 \end{defn}
 
 As usual, by projecting  \eqref{2D-NS} on divergence free fields when dealing with the velocity, the pressure term can be disregarded 
 and the velocity
 is implicitly in the space $V$. % and can be recovered afterwards. 
 %Proposition 2.2 in \cite{BesMil} 
 Lemma 2.1 in \cite{CarPro} (see also \cite{BeBrMi}, Theorem 4.1) shows the following:
  \begin{theorem} \label{strong_wp}
  Assume that $u_0$ is a $V$-valued, ${\mathcal F}_0$-measurable  random variable such that $\EE \big( \|u_0\|_V^{2p}\big) <\infty$ 
  for some real number $p\in [2,\infty)$.  Assume that the condition {\bf (G1)} or {\bf (G2)} is satisfied. %and {\bf (G2)} are satisfied.  
  % with $K_2<\frac{2\nu}{2p-1}$ and $L_2<2\nu$. 
  Then  there exists a unique    solution $u$ to equation \eqref{2D-NS}. 
  %Furthermore, $u\in C([0,T];V) \cap L^2(0,T;Dom(A)) $ a.s.  and 
  Furthermore,  for some positive constant $C$ we have
  \begin{equation}   \label{bound_u}
  \EE\Big( \sup_{t\in [0,T]} \|u(t)\|_V^{2p} + \int_0^T |Au(s)|_{\LL^2}^2 \big( 1+\|u(s)\|_V^{2(p-1)}\big) ds \Big) \leq C\big[ 1+ \EE (\|u_0\|_V^{2p}) \big].
  \end{equation}
  \end{theorem}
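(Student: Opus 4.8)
The plan is the classical Galerkin-plus-a-priori-estimates scheme; existence and uniqueness are by now standard for the stochastic 2D Navier--Stokes equations and are exactly what the cited references \cite{BeBrMi,CarPro} establish, so the substantive point is the quantitative bound \eqref{bound_u}, and the structural fact that makes the whole argument close at the $V$--level is the torus identity \eqref{A-B}. First I would apply the Leray projection $\mathbf{P}_H$ to eliminate the pressure, project the equation onto the span $V_n$ of the first $n$ eigenfunctions of $A$, and note that the resulting finite--dimensional It\^o system has locally Lipschitz coefficients (the bilinear map $B$ is locally Lipschitz on $V_n$), hence a unique maximal local solution $u_n$; the bounds below then make it global.

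For the a priori estimate I would apply It\^o's formula to $\|u_n(t)\|_V^2=(u_n(t),(I+A)u_n(t))$. The essential observation is that the nonlinear contribution splits as $\langle B(u_n,u_n),u_n\rangle+\langle B(u_n,u_n),Au_n\rangle$ and \emph{both pieces vanish}: the first by \eqref{B}, the second by the periodic--boundary identity \eqref{A-B}. This is where the torus is used; on a general bounded domain one would only have $|\langle B(u,u),Au\rangle|\le\tfrac{\nu}{2}|Au|_{\LL^2}^2+C\|u\|_V^6$, which forces a different and more delicate argument. After this cancellation, using \eqref{growthG_W}--\eqref{growthG_V} and the remark following Condition (G2) to bound the It\^o correction trace by $C(1+\|u_n\|_V^2)$, one obtains
\begin{equation*}
d\|u_n\|_V^2+2\nu\big(|\nabla u_n|_{\LL^2}^2+|Au_n|_{\LL^2}^2\big)\,dt\le C\big(1+\|u_n\|_V^2\big)\,dt+dM_n(t),
\end{equation*}
where $dM_n=2(u_n,G(u_n)\,dW)_V$ and $d\langle M_n\rangle\le C\|u_n\|_V^2(1+\|u_n\|_V^2)\,dt$. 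Writing $X_n:=\|u_n\|_V^2$ and applying It\^o to $X_n^p$, with the correction $\tfrac{p(p-1)}{2}X_n^{p-2}\,d\langle M_n\rangle$ absorbed by Young's inequality, gives, after localizing with the stopping times $\tau_R=\inf\{t:\|u_n(t)\|_V>R\}$,
\begin{equation*}
dX_n^p+2p\nu\,X_n^{p-1}\big(|\nabla u_n|_{\LL^2}^2+|Au_n|_{\LL^2}^2\big)\,dt\le C\big(1+X_n^p\big)\,dt+p\,X_n^{p-1}\,dM_n(t).
\end{equation*}

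From here the standard two--step argument applies: taking expectations kills the stochastic integral on $[0,t\wedge\tau_R]$, and Gronwall's lemma gives $\sup_{t\le T}\EE\,X_n(t\wedge\tau_R)^p\le C(1+\EE\|u_0\|_V^{2p})$ uniformly in $n$ and $R$; feeding this back, applying the Burkholder--Davis--Gundy inequality to $\sup_t\big|\int_0^{t\wedge\tau_R}pX_n^{p-1}\,dM_n\big|$, using Young's inequality to absorb $\tfrac12\EE\sup_t X_n^p$ into the left side, and then letting $R\to\infty$ by Fatou, yields
\begin{equation*}
\EE\Big(\sup_{t\le T}\|u_n(t)\|_V^{2p}+\int_0^T\|u_n(s)\|_V^{2(p-1)}|Au_n(s)|_{\LL^2}^2\,ds\Big)\le C\big(1+\EE\|u_0\|_V^{2p}\big),
\end{equation*}
uniformly in $n$. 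The $p=1$ instance of the same computation gives $\EE\int_0^T|Au_n(s)|_{\LL^2}^2\,ds\le C(1+\EE\|u_0\|_V^2)\le C(1+\EE\|u_0\|_V^{2p})$, and adding the two bounds reproduces \eqref{bound_u} for $u_n$.

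Finally I would pass to the limit along a subsequence, using the uniform bounds for weak/weak--$*$ compactness in $L^{2p}(\Omega;L^\infty(0,T;V))$ and $L^2(\Omega\times(0,T);\mathrm{Dom}(A))$ together with the standard monotonicity/compactness identification of $\lim_n B(u_n,u_n)$ (as in \cite{BeBrMi,CarPro}); \eqref{bound_u} survives by weak lower semicontinuity of the norms. Pathwise uniqueness follows from the local monotonicity estimate \eqref{B-B}: for two solutions $u,v$, It\^o's formula for $|u-v|_{\LL^2}^2$, \eqref{B-B} with $\beta$ small enough to absorb $\beta|\nabla(u-v)|_{\LL^2}^2$ into $2\nu|\nabla(u-v)|_{\LL^2}^2$, and \eqref{LipG_W} (or \eqref{LipG_H}) yield $d|u-v|_{\LL^2}^2\le C|u-v|_{\LL^2}^2(1+\|u\|_X^4)\,dt+dN(t)$ with $N$ a martingale; since $u\in C([0,T];V)$ and $V\hookrightarrow\LL^4$ by \eqref{interpol} one has $\int_0^T\|u(s)\|_X^4\,ds<\infty$ $\PP$--a.s., so a stopping--time/stochastic Gronwall argument forces $u=v$. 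The one genuinely load--bearing step is the $V$--level a priori estimate, which closes only because of the cancellation \eqref{A-B}; once that is available, the It\^o expansion of $X_n^p$, the BDG/Gronwall bootstrap, the passage to the limit, and the uniqueness are all routine, the only technical nuisance being the $\tau_R$--localization needed because the stochastic integrals are a priori only local martingales.
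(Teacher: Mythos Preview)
The paper does not actually prove this theorem; it is stated as a citation of Lemma~2.1 in \cite{CarPro} (see also Theorem~4.1 in \cite{BeBrMi}), with no argument given in the present text. Your proposal is correct and is essentially the standard Galerkin--a~priori--estimates scheme those references carry out: the cancellation $\langle B(u,u),u\rangle=\langle B(u,u),Au\rangle=0$ from \eqref{B} and \eqref{A-B} is indeed the load-bearing step that closes the $V$-level energy inequality on the torus, and the subsequent It\^o-on-$X_n^p$, BDG/Gronwall bootstrap, weak-compactness passage to the limit, and local-monotonicity uniqueness via \eqref{B-B} are all handled as you describe.
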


\section{ Time Euler discretization scheme}	 \label{sec_Euler}
%\subsection{Description of the fully implicit scheme and first results}   \label{known_Euler}

In this section, we describe the fully implicit time Euler scheme  of the stochastic 2D Navier-Stokes equations introduced by 
E.~Carelli and A.~Prohl in \cite{CarPro}, and recall the strong convergence proved in \cite{Be-Mi_time}. 
 Fix $N\geq 1$, let $k=\frac{T}{N}$ denote the constant time mesh,  and let  $t_l=l \frac{T}{N}$, $l=0, \cdots, N,$ denote the time grid. 
  \smallskip
 
\noindent {\bf Fully implicit Euler scheme} 
{\it Let $u_0$ be a $V$-valued,  ${\mathcal F}_0$-measurable  
random variable;  set $\bu^0=u_0$. 
Fix $N\geq 1$ and for $l=1, \cdots, N,$ find pairs 
 $\big( \bu^l, \pi^l \big) \in  V\times L^2_{per}  $ such that $\PP$  a.s. for  all  
 $\phi \in \WW^{1,2}$ and $\psi \in L^2_{per}$, 
\begin{align}    \label{full-imp1}
\big( \bu^l - \bu^{l-1} , \phi \big) +  \frac{T}{N} \Big[ \nu \big( \nabla \bu^l , \nabla \phi\big) &+ \big\langle (\bu^l\cdot\nabla \bu^l), 
\phi\big\rangle \Big] - \frac{T}{N} \big( \pi^l, \mbox{\rm div } \phi)%\nonumber  \\
%& 
= \big( G( \bu^{l-1})) \,  \Delta_l W , \phi\big) , \\
\big( \mbox{\rm div } \bu^l,\psi)& = 0,  \label{fu-im-div}
\end{align}
 where $\Delta_l W = W(t_l) - W(t_{l-1})$. } \\

In this section, our aim is to recall bounds for the strong error of this Euler time scheme. Since we are looking for a $V$-valued process, 
we define the scheme for the velocity 
 projected on divergence free fields and reformulate the algorithm as follows  (see \cite[Section 3]{CarPro}).
 \begin{align}    \label{full-imp1Bis}
\big( \bu^l - \bu^{l-1} , \phi \big) +&  \frac{T}{N} \Big[ \nu \big( \nabla \bu^l , \nabla \phi\big) + \big\langle (\bu^l\cdot\nabla \bu^l), 
\phi\big\rangle \Big] % \nonumber  \\
%&\qquad \qquad \qquad 
= \big( G( \bu^{l-1}) \,  \Delta_l W , \phi\big) ,  \quad \forall \phi \in V.
\end{align}

The following result proves the existence and uniqueness of the solution $\{ \bu^l, \, l=0, \cdots, N\}$ of \eqref{full-imp1Bis}; 
it provides moment estimates for this solution.
Note that here only dyadic moments are computed because an the induction argument which relates two consecutive dyadic
numbers (see step 4 of the proof of   \cite[Lemma 3.1]{BrCaPr}).  
\begin{lemma} \cite[Lemma 3.1]{CarPro} 		\label{moments_uN}
Let $u_0\in L^{2^q}(\Omega;V)$ for some integer $q\in [1,\infty)$ be ${\mathcal F}_0$-measurable,
 such that $\EE\big( \|u_0\|_V^{2^q}\big) \leq C$. Assume that $G$ satisfies
condition {\bf (G1)}. Then there exists a unique solution $\{\bu^l\}_{l=0}^N$ to \eqref{full-imp1Bis} with $\bu^0=u_0$. The random variables
$\bu^l$ are ${\mathcal F}_{t_l}$-measurable and belong to $L^2(\Omega;V)$ a.s. Furthermore,
\begin{align}
 \EE\Big( \max_{0\leq l\leq N} \|\bu^l\|_V^{2^q} + 2\nu \frac{T}{N} \sum_{l=1}^N \| \bu^l \|_V^{2^q-2}\; |A \bu^l |^2_{\LL^2}\Big)
&\leq C_1(T,q), \label{C(T,q)}\\
\EE\Big( \sum_{l=1}^N \|\bu^l - \bu^{l-1}\|_V^2 \; \| \bu^l\|_V^2\Big) & \leq C_2(T,2),    \label{C(T,2)}\\
\EE \Big[ \Big( \sum_{l=1}^N \| \bu^l- \bu^{l-1}\|_V^2 \Big)^{2^q} + \Big( \nu \frac{T}{N} \sum_{l=1}^N |A \bu^l|_{\LL^2}^2 \Big)^{2^q} \Big] & 
\leq C_3(T,q),\; q=1,2,
\label{C(T,4)}
\end{align}
where  for $i=1,2,3$, $C_i(T,q):=C_i\big(T,q, {\rm Tr}(Q), K_0, K_1, L_1, \EE(\|u_0\|_V^{2^q})\big)$ is a constant which does not depend on $N$.
\end{lemma}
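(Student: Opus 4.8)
\medskip
\noindent\textbf{Proof proposal.}

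\emph{Step 1 (recursive construction, solvability of one step, measurability).} I would build $\{\bu^l\}_{l=0}^N$ by solving \eqref{full-imp1Bis} step by step. Assume inductively that $\bu^{l-1}\in L^2(\Omega;V)$ is $\mathcal F_{t_{l-1}}$-measurable, put $k=T/N$ and $f=\bu^{l-1}+G(\bu^{l-1})\Delta_l W$; by \eqref{growthG_W} and the independence of $\Delta_l W$ from $\mathcal F_{t_{l-1}}$, $f$ is $\mathcal F_{t_l}$-measurable and lies in $L^2(\Omega;\WW^{1,2})$. One then looks for $\bu^l\in V$ with $(\bu^l,\phi)+k\nu(\nabla\bu^l,\nabla\phi)+k\langle B(\bu^l,\bu^l),\phi\rangle=(f,\phi)$ for all $\phi\in V$, by a Galerkin approximation in the span of the first $m$ eigenfunctions of $A$: the finite-dimensional nonlinear system is solvable by the usual Brouwer-type argument, using the coercivity that follows from testing with $\bu^l_m$ and the cancellation $b(\bu^l_m,\bu^l_m,\bu^l_m)=0$ from \eqref{B}, which also gives the a priori bound $|\bu^l_m|_{\LL^2}^2+2k\nu|\nabla\bu^l_m|_{\LL^2}^2\le|f|_{\LL^2}^2$; letting $m\to\infty$ and using the compact embedding $V\hookrightarrow H$ passes to the limit in the nonlinearity. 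Uniqueness comes from testing the equation for the difference $w$ of two solutions with $\phi=w$, the cancellation $b(v_1,w,w)=0$, \eqref{interpol}, and that a priori bound. Since $\bu^l$ depends measurably on $f$ (it is a limit of Galerkin solutions, each a measurable function of $f$), the induction yields $\mathcal F_{t_l}$-measurability and $\bu^l\in L^2(\Omega;V)$.

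\emph{Step 2 (the base case $q=1$).} Testing \eqref{full-imp1Bis} with $\phi=\bu^l$ and, on the Galerkin level, with $\phi=A\bu^l$, using $2(a-b,a)=|a|^2-|b|^2+|a-b|^2$, the cancellations $b(\bu^l,\bu^l,\bu^l)=0$ from \eqref{B} and $\langle B(\bu^l,\bu^l),A\bu^l\rangle=0$ from \eqref{A-B}, and adding, one gets the one-step identity
\[ \|\bu^l\|_V^2-\|\bu^{l-1}\|_V^2+\|\bu^l-\bu^{l-1}\|_V^2+2k\nu\big(|\nabla\bu^l|_{\LL^2}^2+|A\bu^l|_{\LL^2}^2\big)=2\big(G(\bu^{l-1})\Delta_l W,\bu^l\big)_{\LL^2}+2\big(\nabla G(\bu^{l-1})\Delta_l W,\nabla\bu^l\big)_{\LL^2}. \]
On the right one writes $\bu^l=\bu^{l-1}+(\bu^l-\bu^{l-1})$, keeps the conditionally centered martingale increment, and absorbs the remainder into $\|\bu^l-\bu^{l-1}\|_V^2+C\|G(\bu^{l-1})\Delta_l W\|_{\WW^{1,2}}^2$. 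Summing over $l$, taking expectations, using the linear growth \eqref{growthG_L}--\eqref{growthG_W} together with $\EE\big(\|G(\bu^{l-1})\Delta_l W\|_{\WW^{1,2}}^2\mid\mathcal F_{t_{l-1}}\big)\le k(K_0+K_1\|\bu^{l-1}\|_V^2)$, and a discrete Gronwall lemma gives the bound on $\EE\big(k\nu\sum_l|A\bu^l|_{\LL^2}^2+\cdots\big)$; the $\EE\max_l$ version follows by applying Burkholder--Davis--Gundy to the martingale part. This is \eqref{C(T,q)} with $q=1$.

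\emph{Step 3 (induction on $q$, the remaining bounds, and the main obstacle).} Assume \eqref{C(T,q)} is known at level $q-1$. The identity of Step~2 reads $Y_l\le Y_{l-1}+M_l+E_l$ with $Y_l=\|\bu^l\|_V^2$, $M_l$ a conditionally centered martingale increment, and $E_l\ge0$ with $\EE(E_l\mid\mathcal F_{t_{l-1}})\lesssim k(K_0+K_1Y_{l-1})$; iterating and raising to the power $2^{q-1}$ gives $\EE\max_lY_l^{2^{q-1}}\lesssim\EE Y_0^{2^{q-1}}+\EE\max_l\big|\sum_{j\le l}M_j\big|^{2^{q-1}}+\EE\big(\sum_jE_j\big)^{2^{q-1}}$, and the last two terms are handled by Burkholder--Davis--Gundy (the bracket $\sum_jM_j^2$ being controlled by the level-$(q-1)$ bounds with a small-constant absorption), H\"older's and Young's inequalities, and a discrete Gronwall lemma; the nonlinear term never enters, thanks to \eqref{B} and \eqref{A-B}. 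The bounds \eqref{C(T,2)} and \eqref{C(T,4)} follow similarly: $\|\bu^l-\bu^{l-1}\|_V^2$ and $k\nu|A\bu^l|_{\LL^2}^2$ appear with favorable signs in the one-step identity, so after multiplying it by $\|\bu^l\|_V^2$, respectively raising the summed identity to the power $2^q$, and controlling the cross terms by Cauchy--Schwarz and the already-proven higher dyadic moment bounds, one concludes --- which explains the hypothesis $u_0\in L^{2^q}(\Omega;V)$. I expect the main obstacle to be the induction step for \eqref{C(T,q)}, and within it the control of $k\sum_l\|\bu^l\|_V^{2^q-2}|A\bu^l|_{\LL^2}^2$: after testing with $A\bu^l$ one multiplies by a power of $\|\bu^{l-1}\|_V$ (to preserve conditional centering) and must then repair this into a power of $\|\bu^l\|_V$, producing terms that couple consecutive time levels and couple $\Delta_l W$ with $\bu^l$ itself; these must be split by conditioning on $\mathcal F_{t_{l-1}}$ and balanced with carefully chosen Young exponents so that the dissipation survives, and it is precisely this interaction with the Gaussian increments that makes the estimates close only along the dyadic sequence $\{2^q\}$ (the ``step~4'' argument of \cite[Lemma 3.1]{BrCaPr}).
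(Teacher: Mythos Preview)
The paper does not supply its own proof of this lemma: it is quoted verbatim as \cite[Lemma 3.1]{CarPro}, and the only comment on the argument is the remark preceding the statement that dyadic moments arise from the induction in ``step~4 of the proof of \cite[Lemma 3.1]{BrCaPr}''. Your outline --- Galerkin/Brouwer solvability of one implicit step, testing with $\bu^l$ and $A\bu^l$ using the torus cancellation \eqref{A-B}, then a dyadic induction on $q$ driven by Burkholder--Davis--Gundy and discrete Gronwall --- is exactly the strategy of those references, and your identification of the ``main obstacle'' (repairing $\|\bu^{l-1}\|_V^{2^q-2}$ into $\|\bu^l\|_V^{2^q-2}$ while preserving conditional centering and keeping the dissipation) is precisely the step~4 issue the paper alludes to. So there is nothing to compare against in the present paper itself, and your proposal is in line with the cited proofs.
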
 

The following result about the pressure term will be used in the study of space-time discretization.
\begin{lemma}\cite[Lemma 3.2]{CarPro} Let $q\in [2,\infty)$ and  $\{\bu^l, \pi^l\}_{l=0, \cdots, N}$ be the solution to \eqref{full-imp1} and 
\eqref{fu-im-div}. \\
(i) Suppose that $G$ satisfies the growth and Lipschitz conditions {\bf (G1)}. Then
\begin{equation} 		\label{pressure-W}
\EE\Big( \frac{T}{N} \sum_{l=1}^N |\nabla \pi^l |_{\LL^2}^2\Big) \leq C(T) N.
\end{equation}
(ii) Suppose that $G$ satisfies the growth and Lipschitz conditions {\bf (G2)}. Then
\begin{equation} 		\label{pressure-V}
\EE\Big( \frac{T}{N} \sum_{l=1}^N |\nabla \pi^l |_{\LL^2}^2\Big) \leq C(T).
\end{equation}
\end{lemma}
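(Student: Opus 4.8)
The plan is to recover the discrete pressure gradient $\nabla\pi^l$ explicitly from the momentum equation \eqref{full-imp1} by applying the Leray projection ${\bf P}_H$, and then to bound the resulting terms using the moment estimates of Lemma \ref{moments_uN} together with the growth conditions on $G$.

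First I would use that, by Lemma \ref{moments_uN}, $\PP$-a.s. $\bu^l\in{\rm Dom}(A)$ for every $l$, so $B(\bu^l,\bu^l)\in\LL^2$ and, since $D$ is the torus, \eqref{full-imp1} may be read as an identity in $\LL^2$ (using $(\nabla\bu^l,\nabla\phi)=(A\bu^l,\phi)$ and $(\pi^l,{\rm div}\,\phi)=-(\nabla\pi^l,\phi)$):
\[ \bu^l-\bu^{l-1}+\tfrac{T}{N}\nu A\bu^l+\tfrac{T}{N}B(\bu^l,\bu^l)+\tfrac{T}{N}\nabla\pi^l=G(\bu^{l-1})\,\Delta_lW . \]
Applying ${\bf I}-{\bf P}_H$, the orthogonal projection of $\LL^2$ onto gradient fields (of operator norm $\le 1$): the increment $\bu^l-\bu^{l-1}$ is divergence free, and so is $A\bu^l=-\Delta\bu^l$ (again because $\bu^l$ is, using periodicity), hence both are annihilated, while $({\bf I}-{\bf P}_H)\nabla\pi^l=\nabla\pi^l$. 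This yields the key identity
\[ \nabla\pi^l=\tfrac{N}{T}\,({\bf I}-{\bf P}_H)\big(G(\bu^{l-1})\Delta_lW\big)-({\bf I}-{\bf P}_H)\,B(\bu^l,\bu^l). \]
The structural point separating (i) and (ii) is that under {\bf (G2)} the field $G(\bu^{l-1})\Delta_lW$ lies in $V$, hence is divergence free and the first term vanishes identically, whereas under {\bf (G1)} it need not.

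Next, from $|\nabla\pi^l|_{\LL^2}^2\le 2\tfrac{N^2}{T^2}|G(\bu^{l-1})\Delta_lW|_{\LL^2}^2+2|B(\bu^l,\bu^l)|_{\LL^2}^2$, it remains, after applying $\tfrac{T}{N}\sum_{l=1}^N\EE$, to control two sums. For the noise sum, conditioning on ${\mathcal F}_{t_{l-1}}$ and using the independence of $\Delta_lW$ gives $\EE|G(\bu^{l-1})\Delta_lW|_{\LL^2}^2=\tfrac{T}{N}\EE\,{\rm Tr}\big(G(\bu^{l-1})QG^*(\bu^{l-1})\big)\le \tfrac{T}{N}{\rm Tr}(Q)\big(K_0+K_1\EE|\bu^{l-1}|_{\LL^2}^2\big)$ by \eqref{growthG_L}; since $\EE|\bu^{l-1}|_{\LL^2}^2\le \EE\|\bu^{l-1}\|_V^2\le C_1(T,1)$ by \eqref{C(T,q)}, the contribution of the first term is $\le {\rm Tr}(Q)\big(K_0+K_1C_1(T,1)\big)N=C(T)N$. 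For the nonlinear sum, Hölder's inequality, \eqref{interpol} applied to $\bu^l$ and to $\nabla\bu^l$, and elliptic regularity on the torus give $|B(\bu^l,\bu^l)|_{\LL^2}^2\le \|\bu^l\|_{\LL^4}^2\|\nabla\bu^l\|_{\LL^4}^2\le C\|\bu^l\|_V^3|A\bu^l|_{\LL^2}\le C\big(\|\bu^l\|_V^4+\|\bu^l\|_V^2|A\bu^l|_{\LL^2}^2\big)$; summing, both $\tfrac{T}{N}\sum_l\|\bu^l\|_V^4\le T\max_l\|\bu^l\|_V^4$ and $\tfrac{T}{N}\sum_l\|\bu^l\|_V^2|A\bu^l|_{\LL^2}^2$ are controlled in expectation by $C_1(T,2)$ through \eqref{C(T,q)} with $q=2$, so this sum is $\le C(T)$. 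Combining the two bounds gives (i); under {\bf (G2)} only the nonlinear sum survives and gives (ii).

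The one genuinely delicate step is the pressure-recovery identity, i.e. verifying that ${\bf I}-{\bf P}_H$ kills the time-difference and viscous terms, which relies on the periodic boundary conditions so that $-\Delta$ preserves divergence-free fields and $A=-\Delta$; and, for part (ii), the observation that {\bf (G2)} forces $G(\bu^{l-1})\Delta_lW\in V$, removing the $O(N)$ noise contribution to $\nabla\pi^l$. The remaining estimates are a routine combination of Hölder / Gagliardo--Nirenberg inequalities with the moment bounds of Lemma \ref{moments_uN}.
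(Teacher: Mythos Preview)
Your proof is correct and follows the standard pressure-recovery argument that is used in \cite{CarPro}, from which the paper quotes this lemma without proof: isolate $\nabla\pi^l$ via the complementary Leray projection ${\bf I}-{\bf P}_H$, note that under {\bf (G2)} the stochastic forcing is $V$-valued and hence killed, and bound the surviving terms by the moment estimates \eqref{C(T,q)}. The only minor point to make explicit is why $\nabla\pi^l\in\LL^2$ a priori (so that the weak formulation \eqref{full-imp1} can be read as an $\LL^2$ identity); this follows, as you implicitly use, from the fact that all other terms in the momentum equation lie in $\LL^2$ once $\bu^l\in{\rm Dom}(A)$.
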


The following theorem gives the strong rate of convergence for a multiplicative noise with linear growth. If $u_0$ is deterministic or Gaussian,
then the speed of convergence is any negative power of $\ln(N)$. 
\begin{theorem}	\cite[Theorem 4.4]{Be-Mi_time}			\label{th_Euler_lingrow}
Let $u_0$ be such that $\EE(\|u_0\|_V^{2^q})<\infty$ for some $q\geq 3$, $G$ satisfy assumption  {\bf(G1)} or {\bf(G2)}. 
%and suppose that $K_2<\frac{2\nu}{2^q-1}$ and  $L_2<\frac{4 \nu}{19}$.  
Then the fully implicit scheme  
$\{ \bu^l\}_l$  solution of \eqref{full-imp1Bis}  converges in $L^2(\Omega)$ to the solution $u$ of \eqref{2D-NS}. 
More precisely, for $N$ large enough we have
\begin{align}			\label{speed_lin_growth_Euler}
\EE\Big( \max_{1\leq l \leq N} |u(t_l)- \bu^l|_{\LL^2}^2  + \frac{T}{N} \sum_{l=1}^N \big|\nabla\big( u(t_l)-\bu^l\big) \big|_{\LL^2}^2 \Big) 
\leq    C \big[\ln(N)\big]^{-(2^{q-1} - 1)}.    
\end{align}
\end{theorem}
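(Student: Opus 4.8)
The plan is to run an energy/error estimate for the difference $e^l := u(t_l) - \bu^l$, but—because the nonlinear term is quadratic and expectation destroys the pathwise Gronwall argument—to do so only on a suitable large-probability event $\Omega_N(M)$ on which the $\WW^{1,2}$-norms of both the exact solution $u$ and the time scheme $\{\bu^j\}$ are bounded by some $M = M(N)$. First I would write the evolution equation for $e^l$ by subtracting \eqref{full-imp1Bis} from the variational formulation of \eqref{2D-NS} integrated over $[t_{l-1},t_l]$, test with $\phi = e^l$, and split the nonlinear contribution as $\langle B(u(s),u(s)) - B(\bu^l,\bu^l), e^l\rangle$ plus the time-discretization error of $B(u(\cdot),u(\cdot))$. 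For the first piece I would invoke the bound \eqref{B-B} with $\beta$ small enough to absorb $\beta \|e^l\|_V^2$ into the viscous term $\nu|\nabla e^l|_{\LL^2}^2$, leaving a term $C_\beta |e^l|_{\LL^2}^2 \|\cdot\|_X^4$; on $\Omega_N(M)$ this coefficient is bounded by $C M$ (using \eqref{interpol} to pass from $\|\cdot\|_X^4$ to $\|\cdot\|_{\WW^{1,2}}^4$), so after the discrete Gronwall lemma one picks up a factor $\exp(CMT)$.

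The remaining ingredients are: (i) control of the martingale term coming from $\int_{t_{l-1}}^{t_l}(G(u(s)) - G(\bu^{l-1}))dW(s)$, handled by the Lipschitz bound \eqref{LipG_W}/\eqref{LipG_H}, Burkholder–Davis–Gundy applied to $\max_{1\le l\le N}$, and Young's inequality to reabsorb; (ii) the ``scheme'' consistency errors, namely the time-regularity error $|u(s) - u(t_l)|$ and $\|u(s) - u(t_{l-1})\|$ over a mesh interval, which by the regularity \eqref{bound_u} of $u$ (in particular $u \in C([0,T];V)\cap L^2(0,T;\mathrm{Dom}(A))$) and Itô's formula give contributions of order $T/N$ up to logarithmic or moment factors; (iii) the probability estimate $\PP(\Omega_N(M)^c)$, which by Chebyshev and the moment bounds \eqref{bound_u} and \eqref{C(T,q)} is at most $C M^{-2^{q-1}}$ (say), so that on the complement one uses Cauchy–Schwarz together with uniform moments of $\max_l |u(t_l)|_{\LL^2}^2$ and $\max_l |\bu^l|_{\LL^2}^2$ to bound the contribution by $C\, \PP(\Omega_N(M)^c)^{1/2}$, i.e. a negative power of $M$.

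Putting these together one gets, schematically,
\begin{equation*}
\EE\Big( \max_{1\le l\le N} |e^l|_{\LL^2}^2 + \tfrac{T}{N}\sum_{l=1}^N |\nabla e^l|_{\LL^2}^2 \Big) \le C\, e^{CMT}\, \frac{T}{N} + C\, M^{-(2^{q-1}-1)},
\end{equation*}
and the final step is to optimize over $M$: choosing $M = M(N) \sim \frac{1}{CT}\big(\tfrac12 - \epsilon\big)\ln N$ makes $e^{CMT}\,(T/N)$ of order $N^{-1/2+\epsilon}$, which is negligible compared with $M^{-(2^{q-1}-1)} \sim C[\ln N]^{-(2^{q-1}-1)}$, yielding the stated rate. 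I expect the main obstacle to be step (ii)/(iii) done carefully and uniformly in $N$: one must make sure the consistency errors and the BDG/martingale terms, restricted to the localization set, still only cost powers of $T/N$ times $e^{CMT}$ (not something worse), and one must verify that the moment bounds available—$\EE(\|u_0\|_V^{2^q})<\infty$ with $q\ge 3$—are exactly strong enough to make $\PP(\Omega_N(M)^c)^{1/2} \le C M^{-(2^{q-1}-1)}$, which is what fixes the exponent $2^{q-1}-1$ in \eqref{speed_lin_growth_Euler}. The Gaussian/deterministic refinement for $u_0$ is then the remark that in that case $\EE\exp(\lambda\|u_0\|_V^2)<\infty$, so $\PP(\Omega_N(M)^c)$ decays faster than any polynomial in $M$, improving the rate to an arbitrary negative power of $\ln N$.
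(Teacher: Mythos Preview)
Your overall strategy is exactly the one the paper (and the cited reference \cite{Be-Mi_time}) uses: the present paper does not reprove Theorem~\ref{th_Euler_lingrow} but merely recalls it, and the localize--Gronwall--optimize scheme you describe is precisely what is formalized abstractly in Theorem~\ref{general_strong} (case~(i), $a=1$) and then applied to the space--time scheme in Section~\ref{s5}. In that sense there is no ``different route'' to compare.

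That said, two bookkeeping points in your outline would, as written, \emph{fail to produce the stated exponent} $2^{q-1}-1$.
First, on the complement $\Omega_N(M)^c$ you propose Cauchy--Schwarz, i.e.\ H\"older with $p=2$, giving a contribution $C\,\PP(\Omega_N(M)^c)^{1/2}$. But with $\EE(\|u_0\|_V^{2^q})<\infty$ one has moments of $X:=\max_l|e^l|_{\LL^2}^2$ up to order $p=2^{q-1}$, and only H\"older with this maximal $p$ yields $\PP(\Omega_N(M)^c)^{(p-1)/p}\le C\,M^{-(2^{q-1}-1)}$; Cauchy--Schwarz would give $M^{-2^{q-2}}$ instead. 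This is exactly the computation $\frac{q(p-1)}{ap}$ in Theorem~\ref{general_strong}(i).
Second, invoking \eqref{B-B} and then the crude bound $\|u\|_X^4\le C\|u\|_{\WW^{1,2}}^4$ makes the Gronwall coefficient quadratic in $M$ when you localize on $\{\sup_s\|u(s)\|_V^2\le M\}$, which after optimisation halves the logarithmic exponent. What one actually uses (compare the treatment of $T_3(l)$ around \eqref{T3}, and the constant $\tilde C_3(M)$ in Proposition~\ref{loc-cv-divfree}) is the sharper form
\[
|\langle B(u,u)-B(v,v),\,u-v\rangle| = |b(u-v,u,u-v)| \le \beta\,|\nabla(u-v)|_{\LL^2}^2 + C_\beta\,|u-v|_{\LL^2}^2\,|\nabla u|_{\LL^2}^2,
\]
obtained directly from H\"older and the Gagliardo--Nirenberg inequality \eqref{interpol} applied only to the $u-v$ factors; the resulting coefficient $|\nabla u|_{\LL^2}^2$ is \emph{linear} in $M$ on the localization set, giving $e^{CMT}$ as you claimed. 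With these two fixes your argument matches \cite{Be-Mi_time} and the framework of Section~\ref{framework}. (A minor remark: only the exact solution $u$ needs to appear in the localization set; bounding $\|\bu^j\|_V$ there as well is harmless but unnecessary.)
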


The next result proves a better rate of convergence when the noise is additive. This is due to the existence of finite exponential moments
for $\sup_{t\in [0,T]} \|u(t)\|_V$.
\begin{theorem} \cite{Be-Mi_time} {\rm [Theorem 4.6] } \label{th_Euler_exp}
Let $u_0\in V$, $G$ satisfy assumption {\bf (G1)}  with $K_1=0$. 
% with $K_i=0$, $i=1,2$,  and $L_2=0$. 
Let $u$ denote the solution of
\eqref{2D-NS} and $\{ \bu^l\}_l$ be the fully implicit scheme solution of \eqref{full-imp1Bis}. Then for $N$ large enough, 
$\bar{C}$ (resp.  $\tilde{C}$) defined by \eqref{interpol} (resp. \eqref{Poincare}), 
\begin{equation}			\label{speed_add_Euler}
\EE\Big( \max_{1\leq l\leq N} |u(t_l)-\bu^l|_{\LL^2}^2 + \frac{T}{N} \sum_{l=1}^N \big| \nabla \big[ u(t_l)-\bu^l \big] \big|_{\LL^2}^2\Big)
\leq  C \Big( \frac{T}{N}\Big)^{\beta}, 
\end{equation}
where    for $\tilde{\alpha}:= \frac{\nu}{2\,  K_0 {\rm Tr} \, Q \,  \tilde{C}}$ we  have 
\begin{equation}		\label{beta0}
\beta < \beta_0:= \frac{1}{2} \left(\frac{\tilde{\alpha}_0}{\tilde{\alpha}_0 + \frac{\bar{C}^2 T}{2\nu}}\right).
\end{equation} 
\end{theorem}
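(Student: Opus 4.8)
The plan is to follow the localization strategy already sketched in the introduction, but to exploit the much stronger integrability available in the additive case: when $K_1=0$ the solution $u$ of \eqref{2D-NS} has finite exponential moments for $\sup_{t\in[0,T]}\|u(t)\|_V^2$, and the same should hold for the time scheme $\{\bu^l\}_l$ (this is exactly what the Appendix is said to establish). First I would write the equation for the error $e^l:=u(t_l)-\bu^l$ by subtracting a discretized version of the weak formulation of \eqref{2D-NS} from \eqref{full-imp1Bis}. Testing the error equation against $e^l$ (equivalently, $\bf P_H$-projected so the pressure disappears), one gets the usual telescoping $|e^l|_{\LL^2}^2-|e^{l-1}|_{\LL^2}^2$, a coercive viscous term $\nu\frac TN|\nabla e^l|_{\LL^2}^2$, the nonlinear difference $\frac TN\langle B(u(t_l),u(t_l))-B(\bu^l,\bu^l),e^l\rangle$, a time-discretization consistency error coming from replacing $u(t_l)$ by an Euler step (which is controlled by $\frac TN\int_{t_{l-1}}^{t_l}|Au(s)|_{\LL^2}^2ds$ and increments of the stochastic integral), and a noise term $(\,[G(u(s))-G(\bu^{l-1})]\,,\,e^l)$ integrated over $[t_{l-1},t_l]$. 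Using the bilinear estimate \eqref{B-B} with a small $\beta$ absorbed into the viscous term, the nonlinear term contributes $C_\beta\frac TN|e^l|_{\LL^2}^2\|u(t_l)\|_X^4$, and since $\|\cdot\|_X\le C\|\cdot\|_V$ one may also replace $\|u(t_l)\|_X^4$ by a bound involving $\|u(t_l)\|_V^2$ via \eqref{interpol}; crucially, in the additive case one can use $\|u(t_l)\|_V^2$ (square, not fourth power) as the "bad" random coefficient, because one only needs exponential control of $\sum_l \frac TN\|u(t_l)\|_V^2$, not of a fourth power.

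Second I would run the discrete Gronwall lemma conditionally: from the per-step inequality one obtains, before taking expectations,
\[
\max_{1\le l\le N}|e^l|_{\LL^2}^2+\nu\frac TN\sum_{l=1}^N|\nabla e^l|_{\LL^2}^2\le \mathcal E_N\;\exp\!\Big(C\,\tfrac TN\sum_{l=1}^N\|u(t_l)\|_V^2\Big),
\]
where $\mathcal E_N$ collects the consistency and stochastic error terms and, by Theorem \ref{strong_wp}, Lemma \ref{moments_uN} and the standard Euler consistency analysis (already done in \cite{Be-Mi_time}), satisfies $\EE\big(\mathcal E_N^r\big)\le C(r)(T/N)^{r}$ (up to logarithmic factors) for every $r\ge1$. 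Then I would apply Hölder's inequality with a pair of conjugate exponents $(p,p')$:
\[
\EE\Big(\max_l|e^l|_{\LL^2}^2+\nu\tfrac TN\textstyle\sum_l|\nabla e^l|_{\LL^2}^2\Big)\le \big[\EE(\mathcal E_N^{p})\big]^{1/p}\Big[\EE\exp\!\Big(Cp'\,\tfrac TN\textstyle\sum_l\|u(t_l)\|_V^2\Big)\Big]^{1/p'}.
\]
The first factor is $O((T/N)^{1})$; the exponential factor is finite precisely when $Cp'$ stays below the threshold fixed by the exponential-moment estimate for $\sup_{s\le T}\|u(s)\|_V^2$, which is governed by the balance between the viscosity $\nu$ and $K_0\,\mathrm{Tr}\,Q$ — this is where $\tilde\alpha_0=\nu/(2K_0\,\mathrm{Tr}\,Q\,\tilde C)$ enters. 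Choosing $p'$ as large as this constraint allows, and noting that the constant $C$ in front of $\sum_l\|u(t_l)\|_V^2$ is essentially $\bar C^2/(2\nu)$ coming from \eqref{interpol}–\eqref{B-B} after absorbing $\beta$, a short optimization over $(p,p')$ yields the exponent $\beta<\beta_0=\tfrac12\big(\tilde\alpha_0/(\tilde\alpha_0+\bar C^2T/(2\nu))\big)$, which is exactly \eqref{beta0}. The limit $K_1\to0$ is automatic here; the statement $\beta\to1$ as the noise strength $\to0$ corresponds to $\tilde\alpha_0\to\infty$.

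The main obstacle is the interplay between the Gronwall exponential factor and the finiteness of the exponential moment: one must track the constant $C$ in $\exp(C\,\tfrac TN\sum_l\|u(t_l)\|_V^2)$ with enough precision that, after the Hölder split, $Cp'$ lands below the admissible exponential-moment threshold for a range of $p'$ large enough to make $(T/N)^{1/p}$ close to $(T/N)^1$; sloppiness in any of the absorbing inequalities \eqref{interpol}, \eqref{B-B} would degrade the constant and hence $\beta_0$. A secondary technical point is to verify that $\EE(\mathcal E_N^p)\le C(p)(T/N)^{p}$ holds for the relevant $p$, i.e. that all the consistency and martingale increment terms, once raised to the $p$-th power and summed over the grid, are genuinely $O((T/N)^{p})$ with constants uniform in $N$ — this uses the moment bounds \eqref{bound_u} and \eqref{C(T,q)}–\eqref{C(T,4)} together with the Burkholder–Davis–Gundy inequality, and is essentially borrowed from the proof of Theorem \ref{th_Euler_lingrow} in \cite{Be-Mi_time}.
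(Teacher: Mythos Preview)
Your strategy departs from the paper's and has a genuine gap at the pathwise Gronwall step. After testing the error equation with $e^l=u(t_l)-\bu^l$ and using \eqref{B-B}, the nonlinear contribution produces a term $C_\epsilon\,\tfrac{T}{N}\,\|u(t_l)\|_V^2\,|e^l|_{\LL^2}^2$ (note: $|e^l|$, not $|e^{l-1}|$, because the scheme \eqref{full-imp1Bis} is fully implicit in the bilinear term). To iterate the per–step inequality \emph{pathwise} you would need $C_\epsilon\,\tfrac{T}{N}\,\|u(t_l)\|_V^2<1$ almost surely for every $l$, and this fails on a set of positive probability for any fixed $N$. There is no clean pathwise bound of the form $\max_l|e^l|_{\LL^2}^2\le \mathcal E_N\exp\big(C\,\tfrac{T}{N}\sum_l\|u(t_l)\|_V^2\big)$; the martingale and consistency pieces inside $\mathcal E_N$ also involve $e^l$ in ways that only simplify \emph{after} taking expectation. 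Secondly, even if such a bound held, your optimization is inconsistent: if indeed $[\EE(\mathcal E_N^p)]^{1/p}\le C(p)\,\tfrac{T}{N}$ for all $p$, then H\"older gives rate $(T/N)^1$ whenever the exponential factor is finite, not $(T/N)^{\beta_0}$; the exponent $\beta_0<1$ cannot arise from varying $p$ in your scheme.

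The paper's route is different and is spelled out in the comment following the statement: one uses \cite[Proposition~4.3]{Be-Mi_time} (a localized estimate of the form $\EE\big(1_{\{\sup_s\|u(s)\|_V^2\le M\}}X\big)\le \varphi(N)\,e^{(1+\delta)C_0 M}$ with $C_0=\bar C^2T/(2\nu)$, obtained by first multiplying by the indicator, \emph{then} taking expectation so that martingale terms vanish, and \emph{then} applying the discrete Gronwall lemma, which is legitimate since on the localized set $kM$ is small), together with the exponential moment Theorem~\ref{th_mom_exp_u}; these are fed into the abstract framework Theorem~\ref{general_strong}(iii) (case $a=1$), where one optimizes over the threshold $M$ rather than over H\"older exponents. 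That optimization balances $\varphi(N)e^{C_0 M}$ against $e^{-\tilde\alpha_0 M(p-1)/p}$ and produces exactly $\beta<\beta_0=\tfrac12\,\tilde\alpha_0/(\tilde\alpha_0+C_0)$.
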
 
 In reference \cite{Be-Mi_time}, we had different assumptions for the splitting scheme and the Euler one. The exponential
moments were proven for the splitting scheme setting and when applying the result for the Euler scheme, we forgot to insert
the trace of $Q$.    Furthermore, using Proposition \cite[Proposition 4.3]{Be-Mi_time}, 
Theorem \ref{th_mom_exp_u}, Theorem \ref{general_strong} and Corollary \ref{general_cor},  
we can change $\tilde{\alpha}_0$ given in \cite{Be-Mi_time} to the above constant.

\section{Space-time discretization}		\label{s4}
\subsection{Description of the finite elements method}
When studying a space time discretization using finite elements,
 one needs to have a stable pairing of the velocity and the pressure which
 satisfy the discrete LBB-condition (see e.g. \cite{CarPro}, page 2469 and pages 2487-2489). Stability issues are crucial, and the pressure has
 to be discretized together with the velocity.
 
 Let ${\mathcal T}_h$ be a quasi-uniform triangulation of the domain $D\subset \RR^2$, using triangles of maximal diameter $h>0$, and set
 $\bar{D}=\cup_{K\in {\mathcal T}_h} \bar{K}$. Let $\PP_i(K):=[P_i(K)]^2$ denote the space of polynomial vector fields  on $K$
  of degree less than or equal to $i$. Given non negative integers $i,j$, we introduce finite element function spaces 
  \begin{align*}
  {\HH}_h &\,  := \{ {\bf U} \in C^0(\bar{D}) \cap \WW^{1,2}_{per}(D) : {\bf U}\in  \PP_i(K), \quad \forall K\in {\mathcal T}_h\}, \\
  L_h &\, := \{ \Pi \in L^2_{per}(D) : \Pi\in P_j(K), \quad \forall K\in {\mathcal T}_h\},
  \end{align*}
which satisfy the discrete LBB-condition 
\begin{equation}		\label{LBB}
\sup_{\bPhi \in \HH_h} \frac{(\mbox{\rm div }  \bPhi , \Pi)}{|\nabla \bPhi |_{\LL^2} } \geq C\; |\Pi|_{\LL^2}, \quad \forall \Pi \in L_h,
\end{equation}
with a constant $C>0$ independent of the mesh size $h>0$.  Here $C^0(\bar{D})$ denotes the set of continuous vector fields on $\bar{D}$. 

Define the subset $\VV_h \subset \HH_h$  of discrete divergence-free vector fields 
\begin{equation} 		\label{Vh}
\VV_h:= \{ \bPhi \in \HH_h \; : \; ({\rm div }\,  \bPhi, \Lambda)=0, \quad \forall \Lambda \in L_h\}.
\end{equation}
 Note that in general $\VV_h \not\subset V$. 

%One of the problems is the reduced stability properties of the space-time discretization we will have
%to consider,  compared to those used in the time Euler scheme \eqref{full-imp1}-\eqref{fu-im-div}. 
%Indeed, as noticed in \cite{CarPro},  the following estimate holds for an approximation
%${\bf V}\in \VV_h$ of the deterministic incompressible stationary Stokes equation with solution 
%$({\bf u},\pi)\in (\VV\cap \WW^{2,2}) \times W^{1,2}/\RR$,
%\[ |\nabla ({\bf u} -{\bf V})|_{\LL^2} \leq C \Big(  \inf_{{\bf \Phi}\in \VV_h} |\nabla ({\bf u}-{\bf \Phi})|_{\LL^2} 
%+ \inf_{\Xi \in L_h} \; \sup_{ 0\neq {\bf \Phi} \in \VV_h }
%\frac{ ({\rm div } ({\bf \Phi} , \pi - \Xi)}{|\nabla {\bf \Phi}|_{\LL^2}}\Big).
%\] 

%The finite elements error for the velocity will depend on the best
%approximation of the error of the pressure. Indeed, in general in the stochastic case, the pressure term 
%has limited regularity. More precisely,   $\pi\in L^1(\Omega; W^{-1,\infty}(0,T; \LL^2(D)/RR)$
%(see \cite{LRS}). 
A way around this problem is to choose a space approximation such  that $\VV_h\subset V$, such as the Scott-Vogelius mixed elements 
(see \cite{ScoVog} and \cite{Zhang}). 
%In that case, the term involving the pressure vanishes.
%and the approximation error is 
%\[ |\nabla ({\bf u}-{\bf V})|_{\LL^2} \leq C \inf_{{\bf \Phi \in \VV_h}} |\nabla ({\bf u}-{\bf \Phi})|_{\LL^2}.\]
This particular case yields a better approximation. Indeed, on one hand the pressure will not appear in the upper estimate, and on the
other hand a different localization will provide a polynomial error in the case of an additive noise. 

Let ${\bf Q}_h^0 : \LL^2 \mapsto \VV_h$ (resp. $P^0_h:L^2_{per}(D)\mapsto L_h$) denote the orthogonal projection defined by
\begin{equation} 		\label{Q0h}
({\bf z}- {\bf Q}^0_h {\bf z} , {\bf \Phi})=0, \quad \forall {\bf \Phi} \in \VV_h, \qquad {\rm (resp. } \quad (z-P^0_hz, \Lambda)=0,
 \quad \forall \Lambda \in L_h{\rm )}.
\end{equation} 
The following estimates are standard (see e.g. \cite{HeyRan})
\begin{align}
|{\bf z} - {\bf Q}_h^0 {\bf z}|_{\LL^2} + h |\nabla ({\bf z} - {\bf Q}^0_h {\bf z})|_{\LL^2} & \leq \; C\,  h^2 \,  |A{\bf z}|_{\LL^2}, \quad 
\forall {\bf z}\in V \cap
\WW^{2,2}(D),	\label{I-Q_1}\\
|{\bf z} - {\bf Q}^0_h {\bf z}|_{\LL^2} & \leq \; C\, h \, |\nabla  {\bf z}|_{\LL^2}, \quad \forall {\bf z}\in V,   \label{I-Q_2} \\
%\cap \WW^{1,2}(D),} \label{I-Q_2} \\
|z-P^0_h z|_{L^2} & \leq C\, h\, |\nabla z|_{L^2}, \qquad \forall z\in W^{1,2}_{per}(D).		\label{I-P}
\end{align}

Using the Gagliardo-Nirenberg inequality \eqref{interpol}, we deduce from \eqref{I-Q_1} and \eqref{I-Q_2}  the following upper estimates for
${\bf z}\in V \cap \WW^{2,2}(D)$
\begin{equation} 		\label{I-Q_L4}
\|{\bf z}-\bQ {\bf z}\|_{\LL^4} \leq C h^{\frac{3}{2}} |A{\bf z}|_{\LL^2}, \quad {\rm and} \;  \|{\bf z}-\bQ {\bf z}\|_{\LL^4} \leq 
C h\, |A{\bf z}|_{\LL^2}^{\frac{1}{2}} \, |\nabla {\bf z}|_{\LL^2}^{\frac{1}{2}}.
\end{equation}

\subsection{Description of the space-time schemes} 
Unlike the time discretization, we will need to approximate the initial condition $u_0$ and replace it by an $\HH_h$-valued random variable $\bU^0$. 
As in \cite{CarPro}, we suppose that
\begin{equation} 		\label{U0}
\EE( |u_0 - \bU^0|_{\LL^2}^2 ) \leq C\; h^2, \qquad \EE(|\nabla \bU^0|_{\LL^2}^2) \leq C
\end{equation}
for some positive constant $C$. As it is usual in this framework, to ease notations we let $k:=\frac{T}{N}$ denote the constant time mesh. 
\\
For general finite elements satisfying the discrete LBB condition  \eqref{LBB}, one has to change the tri-linear term
$b(\bU_1, \bU_2,\bU_3 ) = ([\bU_1 \cdot \nabla] \bU_2\, , \bU_3)$ to control the nonlinear effect
 in the presence of discretely divergence-free velocity iterates,
and thus to allow stability of the scheme. Thus, we set 
\begin{equation}			\label{tildeb}
\tilde{b}(\bU_1, \bU_2, \bU_3) := ([\bU_1 \cdot \nabla ] \bU_2\, , \, \bU_3) + \frac{1}{2} \big( [{\rm div} \bU_1]\, \bU_2 \ , \bU_3\big), \quad
\forall \bU_1, \bU_2, \bU_3\in \WW^{1,2}.
\end{equation}
Note that this trilinear term is anti-symmetric with respect to the last two variables, i.e., 
\begin{equation}		\label{anti}
\tilde{b} (\bU_1, \bU_2, \bU_3) = - \tilde{b}(\bU_1, \bU_3, \bU_2), \qquad \forall \bU_1, \bU_2, \bU_3 \in \WW^{1,2}.
\end{equation}
Therefore, 
\[ %begin{equation} 		\label{anti}
\tilde{b}(\bU, \bPhi, \bPhi)=0, \qquad \forall \bU, \bPhi \in \WW^{1,2}.
\] 
%end{equation} 
%\newpage
\medskip

\noindent {\bf  Algorithm 1.}
%{\bf General space-time Euler scheme} As usual in this framework, let $k:=\frac{T}{N}$ denote the constant time mesh. 
 {\it Let $\bU^0$ be an  ${\mathcal F}_0$-measurable, $\HH_h$-valued random variable. For every $l=1, \cdots, N$,
we consider a pair of $\HH_h\times L_h$ random variables $(\bU^l, \Pi^l)$ such that for every pair $(\bPhi,\Lambda)\in \HH_h\times L_h$,
 we have a.s. }
\begin{align}		\label{algo_FE}
(\bU^l - \bU^{l-1}, \bPhi) + k \, \nu \, (\nabla \bU^l, \nabla \bPhi) + k \big( [\bU^{l-1} \cdot \nabla] \bU^l, \bPhi\big)
&+ \frac{k}{2} \big( [{\rm div} \, \bU^{l-1}]\, \bU^l, \bPhi\big) \nonumber  \\
- k ( \Pi^l, {\rm div }\,  \bPhi) &= (\bG(\bU^{l-1} ) \Delta_l W, \bPhi),\\
({\rm div }\,  \bU^l, \Lambda)&=0. \label{divU}
\end{align}

The following result, which states the existence and uniqueness of the pairs ${(\bU^l, \Pi^l)}_{l=1, \cdots, N}$ and provides moments
of the solution, 
 has been proven in \cite[Lemma 3.1]{BrCaPr}  when $\bU^0$ is deterministic (see also \cite[Lemma 4.1]{CarPro} for a random initial condition).
  Once more the exponents are dyadic
numbers because of an induction argument which enables to deduce results when doubling the exponent. 
\begin{lemma}		\label{moments_U}
Let $q\in [1, \infty)$ and $\bU^0$ be an  ${\mathcal F}_0$-measurable, $\HH_h$-valued  random variable which satisfies \eqref{U0}, and such that
$\EE\big(|\bU^0|_{\LL^2}^{2^q}\big) \leq C$ for some positive constant $C$ independent of $h>0$. Suppose that the coefficient $G$ satisfies
condition {\bf (G1)}. Then for every $l=1, \cdots, N$, there exists a unique pair $(\bU^l, \Pi^l)$ of ${\mathcal F}_{l k}$-measurable, 
 $\VV_h\times L_h$-valued  random variables which satisfy \eqref{algo_FE}-\eqref{divU}. Furthermore,
\begin{align}
&\EE\Big( \max_{0\leq l\leq N} |\bU^l|_{\LL^2}^{2^q} + \nu\, k \sum_{l=1}^N |\bU^l|_{\LL^2}^{2^{q-1}} |\nabla \bU^l|_{\LL^2}^2 \Big) \leq 
{\bf C}_1(T,q), 
   \label{U2}\\
&\EE \Big[ \Big( k \sum_{l=1}^N |\nabla \bU^l|^2_{\LL^2}\Big)^{2^{q-1}}\Big] \leq {\bf C}_2(T,q), \label{sum_grad} \\
& \EE\Big(  \sum_{l=1}^N |\bU^l-\bU^{l-1}|_{\LL^2}^2\Big) \leq {\bf C}_3(T,1), \label{U-U_L2}
\end{align}
where the constants    ${\bf C}_i(T,q):={\bf C}_i\big(T,q, {\rm Tr}\, Q, K_0, K_1, L_1, \EE(|\bU^0|_{\LL^2}^{2^q})\big)$, $i=1,2,3$
 do not depend on $N$ and $h>0$.
\end{lemma}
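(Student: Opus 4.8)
The plan is to run an induction on the time index $l$: at each step we first solve the finite-dimensional nonlinear system for the pair $(\bU^l,\Pi^l)$, and then, once existence and uniqueness are secured, derive the moment bounds \eqref{U2}--\eqref{U-U_L2}, proving \eqref{U-U_L2} and the case $q=1$ by a direct energy estimate and passing to general $q$ by a dyadic induction.

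\emph{Existence, uniqueness and measurability at step $l$.} Fix $\bU^{l-1}$. Testing \eqref{algo_FE} only against $\bPhi\in\VV_h$ removes the pressure term by \eqref{Vh}, so it suffices to find $\bU^l\in\VV_h$ with
\[
(\bU^l,\bPhi)+k\nu(\nabla\bU^l,\nabla\bPhi)+k\,\tilde b(\bU^{l-1},\bU^l,\bPhi)=\big(\bU^{l-1}+\bG(\bU^{l-1})\Delta_lW,\bPhi\big),\qquad\forall\,\bPhi\in\VV_h.
\]
Let $F:\VV_h\to\VV_h$ be defined by letting $(F(\bV),\bPhi)$ equal the difference of the two sides. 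Then $F$ is continuous on the finite-dimensional space $\VV_h$, and since $\tilde b(\bU^{l-1},\bV,\bV)=0$ by \eqref{anti} we have $(F(\bV),\bV)=|\bV|_{\LL^2}^2+k\nu|\nabla\bV|_{\LL^2}^2-(\bU^{l-1}+\bG(\bU^{l-1})\Delta_lW,\bV)$, which is strictly positive once $|\bV|_{\LL^2}$ is large; a standard corollary of Brouwer's theorem then produces a zero of $F$, i.e. a solution $\bU^l$. If $\bU_1,\bU_2$ are two solutions, subtracting the equations, testing with $\bPhi=\bU_1-\bU_2$ and using $\tilde b(\bU^{l-1},\bU_1-\bU_2,\bU_1-\bU_2)=0$ gives $|\bU_1-\bU_2|_{\LL^2}^2+k\nu|\nabla(\bU_1-\bU_2)|_{\LL^2}^2=0$, hence uniqueness. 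With $\bU^l$ fixed, the full identity \eqref{algo_FE} tested against a complement of $\VV_h$ in $\HH_h$ determines $\Pi^l\in L_h$ uniquely through the discrete LBB condition \eqref{LBB}, whose constant does not depend on $h$. Finally, uniqueness together with continuous dependence on the data makes $\bU^l$ a measurable function of $(\bU^{l-1},\Delta_lW)$, so by induction $\bU^l,\Pi^l$ are $\mathcal F_{lk}$-measurable, $\Delta_lW$ being independent of $\mathcal F_{(l-1)k}$.

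\emph{Energy estimate, $q=1$.} Test \eqref{algo_FE} with $\bPhi=2\bU^l\in\VV_h$; the pressure term vanishes by \eqref{divU}, the nonlinear term by \eqref{anti}, and the polarization identity $2(a-b,a)=|a|^2-|b|^2+|a-b|^2$ yields
\[
|\bU^l|_{\LL^2}^2-|\bU^{l-1}|_{\LL^2}^2+|\bU^l-\bU^{l-1}|_{\LL^2}^2+2k\nu|\nabla\bU^l|_{\LL^2}^2=2\big(\bG(\bU^{l-1})\Delta_lW,\bU^l\big).
\]
Write the right side as $2(\bG(\bU^{l-1})\Delta_lW,\bU^{l-1})+2(\bG(\bU^{l-1})\Delta_lW,\bU^l-\bU^{l-1})$, bound the second term by $\tfrac12|\bU^l-\bU^{l-1}|_{\LL^2}^2+2|\bG(\bU^{l-1})\Delta_lW|_{\LL^2}^2$ and absorb its first half into the left side. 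Summing over $l$, taking the maximum over partial sums and then expectations, controlling the martingale $\sum_l 2(\bG(\bU^{l-1})\Delta_lW,\bU^{l-1})$ by the discrete Burkholder--Davis--Gundy inequality, and using $\EE\big(|\bG(\bU^{l-1})\Delta_lW|_{\LL^2}^2\,\big|\,\mathcal F_{(l-1)k}\big)\le k\,{\rm Tr}(Q)\big(K_0+K_1|\bU^{l-1}|_{\LL^2}^2\big)$ from \eqref{growthG_L} and the bound on $\EE|\bU^0|_{\LL^2}^2$ in \eqref{U0}, the discrete Gronwall lemma gives \eqref{U2}, \eqref{sum_grad} and \eqref{U-U_L2} for $q=1$; only $k=T/N$ and the $h$-independent projection/inf-sup constants enter, so the bounds are uniform in $N$ and $h$.

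\emph{Dyadic induction, $2^{q-1}\to2^q$.} Assume the estimates hold with exponent $2^{q-1}$. Set $a_l:=|\bU^l|_{\LL^2}^2$; the identity above gives $a_l-a_{l-1}\le 2(\bG(\bU^{l-1})\Delta_lW,\bU^l)-2k\nu|\nabla\bU^l|_{\LL^2}^2$, and convexity of $x\mapsto x^{2^{q-1}}$ yields $a_n^{2^{q-1}}\le a_0^{2^{q-1}}+2^{q-1}\sum_{l=1}^n a_l^{2^{q-1}-1}(a_l-a_{l-1})$. Substituting, one gets an inequality whose left side controls $\max_n a_n^{2^{q-1}}$ together with a weighted sum of the $|\nabla\bU^l|_{\LL^2}^2$ (which, combined with the $q=1$ estimate \eqref{sum_grad}, reproduces the term stated in \eqref{U2}), while the right side carries the martingale $\sum_l a_l^{2^{q-1}-1}(\bG(\bU^{l-1})\Delta_lW,\bU^l)$ and lower-order remainders. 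Applying discrete BDG to the martingale, splitting the resulting products with Young's inequality so that the top power of $\max_l a_l$ is absorbed into the left side, and estimating the remaining stochastic terms by \eqref{growthG_L} together with the induction hypothesis and \eqref{interpol} where gradient norms appear, a discrete Gronwall inequality closes the induction and gives \eqref{U2}, \eqref{sum_grad}, \eqref{U-U_L2} at level $q$. I expect the bookkeeping of this last step to be the main obstacle: one must track exactly which powers of $\max_l|\bU^l|_{\LL^2}$, of $k\sum_l|\nabla\bU^l|_{\LL^2}^2$ and of $\sum_l|\bG(\bU^{l-1})\Delta_lW|_{\LL^2}^2$ are produced after raising the summed identity to a dyadic power and applying BDG, and verify that each is either absorbable or already bounded at level $2^{q-1}$ — this is precisely ``step 4'' in the proof of \cite[Lemma 3.1]{BrCaPr}, and it also makes transparent why all constants are independent of $N$ and of the mesh size $h$.
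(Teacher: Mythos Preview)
The paper does not give its own proof of this lemma: it simply cites \cite[Lemma~3.1]{BrCaPr} (deterministic $\bU^0$) and \cite[Lemma~4.1]{CarPro} (random $\bU^0$), noting only that the dyadic exponents arise from an induction that doubles the exponent at each step. Your proposal is a faithful sketch of exactly that argument --- Brouwer for existence in $\VV_h$, skew-symmetry of $\tilde b$ for uniqueness and the basic energy identity, discrete BDG plus Gronwall for $q=1$, then the convexity/doubling step --- and you even identify the relevant ``step 4'' of \cite{BrCaPr} yourself.

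One point to tighten: the sum $\sum_l a_l^{2^{q-1}-1}\big(\bG(\bU^{l-1})\Delta_lW,\bU^l\big)$ is not a martingale as written, because both $a_l=|\bU^l|_{\LL^2}^2$ and the second entry $\bU^l$ are $\mathcal F_{lk}$-measurable rather than $\mathcal F_{(l-1)k}$-measurable. Before invoking BDG you must replace $a_l^{2^{q-1}-1}$ by $a_{l-1}^{2^{q-1}-1}$ and $\bU^l$ by $\bU^{l-1}$, and control the resulting correction terms (involving $a_l^{2^{q-1}-1}-a_{l-1}^{2^{q-1}-1}$ and $\bU^l-\bU^{l-1}$) with Young's inequality and the $|\bU^l-\bU^{l-1}|_{\LL^2}^2$ already sitting on the left side. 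This is precisely the bookkeeping you flag as the main obstacle, but it is worth stating explicitly since calling that sum a martingale without the shift is incorrect.
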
 

We can reformulate the algorithm \eqref{algo_FE}-\eqref{divU} as follows, using divergence-free test  functions (see \cite[(4.4)]{CarPro}).\\
{\bf  Algorithm 2.}
%Algorithm using divergence-free test functions} 
{\it We have a.s. for $l=1, \cdots, N$ }
\begin{align}			\label{algo_special_FE}
(\bU^l - \bU^{l-1}, \bPhi) +  \nu \, k\,  (\nabla \bU^l, \nabla \bPhi) + &k \big( [\bU^{l-1} \cdot \nabla] \bU^l, \bPhi\big)
+ \frac{k}{2} \big( [{\rm div} \bU^{l-1}]\, \bU^l, \bPhi\big) \nonumber  \\
 &= (\bG(\bU^{l-1}) \Delta_l W, \bPhi), \qquad \forall \bPhi \in \VV_h .
\end{align} 
As in \cite{CarPro}, we will compare the space-time scheme $\bU^l$ and the fully implicit time scheme $\bu^l$. 
%To ease notations, for fixed $N$ and $l=0, \cdots, N$, set $\bu^l=u_N(t_l)$ and $\pi^l=\pi_N(t_l)$.
For $l=0, \cdots, N$, let $\bE^l:=\bu^l- \bU^l$. For every $l=1, \cdots, N$ and  $\bPhi\in \HH_h$, 
\begin{align*}
(\bE^l-\bE^{l-1}\, , \,  \bPhi) &+ \nu k (\nabla \bE^l \, ,\,  \nabla \bPhi) + k \tilde{b}(\bu^l, \bu^l, \bPhi) 
- k \tilde{b}( \bU^{l-1}, \bU^l, \bPhi) \\
& - k \big( \pi^l-\Pi^l\, ,\, {\rm div }\, \bPhi\big) = \big( \big[ G(\bu^{l-1}) - G(\bU^{l-1})\big]  \Delta_lW \, , \,  \bPhi\big). 
\end{align*}
Since $\bU^l\in \VV_h$ for $l=1, \cdots, N$,  we have $\bQ \bU^l=\bU^l$; in the above identity,  choose 
\[ %begin{equation}		\label{choicePhi}
\bPhi : = \bQ \bE^l = \bE^l - \big( \bu^l-\bQ \bu^l\big).
\] %end{equation}
Then, since $\bQ \bE^l\in \VV_h$ and $\Pi^l\in L_h$, using \eqref{Vh} we deduce that  $\big( \Pi^l, {\rm div}\, \bQ \bE^l\big)=0$ for $l=1, \cdots, N$. 
Since $(a, a-b) = \frac{1}{2} \big( |a|_{\LL^2}^2 - |b|_{\LL^2}^2 + |a-b|_{\LL^2}^2\big)$, we deduce
\begin{align}		\label{E^m-E^(m-1)}
\frac{1}{2} &\Big[ |\bQ \bE^l|_{\LL^2}^2 - |\bQ \bE^{l-1}|_{\LL^2}^2 + |\bQ (\bE^l - \bE^{l-1}|_{\LL^2}^2 \Big] 
+ \nu k |\nabla  \bE^l|_{\LL^2}^2 
\nonumber \\
&\quad + k \tilde{b}\big(\bu^l-\bu^{l-1}, \bu^l , \bQ \bE^l\big) +k  \tilde{b}\big(\bu^{l-1}, \bu^l ,\bQ \bE^l\big) -
k  \tilde{b}\big(\bU^{l-1}, \bU^l,  
\bQ \bE^l\big) \nonumber \\
& = \nu k \big( \nabla \bE^l, \nabla(\bu^l - \bQ \bu^l) \big) + k (\pi^l, {\rm div}\, \bQ \bE^l) 
+ \Big( \big[ G(\bu^{l-1})) - G(\bU^{l-1})\big]  \Delta_l W\, , \, \bQ \bE^l\Big). 
\end{align}

\subsection{ Intermediate results}

In this section, we will 
 prove  a series of estimates that will be crucial for the convergence results  later on. 

For $l=0, \cdots, N$, let $\Omega_l\subset \Omega$ be a decreasing family of ${\mathcal F}_{t_l}$-adapted sets, that is 
\begin{equation} \label{Omegal}
\Omega_{l+1} \subset 
\Omega_l, \; l=0, \cdots, N-1, \quad {\rm and } \; \; \Omega_l \in {\mathcal F}_{t_l}, \; l=0, \cdots, N.
\end{equation} 
{\bf Part 1: Estimate of the error term $\big|\big( \nabla \bE^l\, ,\,  \nabla [\bu^l-\bQ \bu^l]\big)\big|$}\\
%We at first upper estimate $\big|\big( \nabla \bE^l\, ,\,  \nabla (\bu^l-\bQ \bu^l)\big)\big|$. 
Using  
the Cauchy-Schwarz and  Young inequalities,  and then \eqref{I-Q_1}, we obtain for $\epsilon >0$
 \begin{align*}		%\label{E1}
 \big|  \big( \nabla \bE^l\, ,\,  \nabla (\bu^l-\bQ \bu^l)\big)\big|\leq & \; \big| \nabla \bE^l \big|_{\LL^2} \; \big|\nabla (\bu^l - \bQ \bu^l)\big|_{\LL^2}\\
 \leq &\;  \epsilon \, \big| \nabla \bE^l \big|_{\LL^2}^2 + C( \epsilon) \, \big|\nabla (\bu^l - \bQ \bu^l)\big|^2_{\LL^2} \\
  \leq  &\; \epsilon \, \big| \nabla \bE^l \big|_{\LL^2}^2 + C( \epsilon) \, h^2 \, |A\bu^l|_{\LL^2}^2. 
 \end{align*}
This implies for $\epsilon >0$ and $m=1, \dots, N$ 
\begin{align}		\label{E1}
\EE \Big( \max_{1\leq n\leq m } &\; \nu \, k \sum_{l=1}^n 
\big| 1_{\Omega_{l-1}}\, \big( \nabla \bE^l\, ,\,  \nabla (\bu^l-\bQ \bu^l)\big)\big| \nonumber \\
&\leq \; \epsilon \nu  \EE\Big( k \sum_{l=1}^m  1_{\Omega_{l-1}}
 \, |\nabla \bE^l|_{\LL^2}^2 \Big) + C(\epsilon)\, \nu\,  k\, h^2\, \EE\Big( 
\sum_{l=1}^N \nu \, |A \bu^l|_{\LL^2}^2\Big) \nonumber \\
& \leq\;  \epsilon \nu \EE\Big( k \sum_{l=1}^m  1_{\Omega_{l-1}} \, |\nabla \bE^l|_{\LL^2}^2 \Big) + C(  \nu, \epsilon)\;  h^2\;  C_1(T,1),
\end{align}
where in the last upper estimate we have used \eqref{C(T,q)} in Lemma \ref{moments_uN} with $q=1$. 

\noindent {\bf Part 2: Estimate of the term $\tilde{b}(\bu^l-\bu^{l-1}, \, \bu^l, \, \bQ \bE^l)$}\\
%We next study $\tilde{b}(\bu^l-\bu^{l-1}, \, \bu^l, \, \bQ \bE^l)$. 
Using the antisymmetry properties \eqref{B} and \eqref{anti}, the H\"older inequality
 and the inequality  $\|u\|_{\LL^4}\leq C\; | \nabla u|_{\LL^2}$ coming from the Sobolev embedding theorem, we deduce
\begin{align*}
\big| {b}(\bu^l-\bu^{l-1}, &\, \bu^l, \, \bQ \bE^l) \big| + \big| \tilde{b}(\bu^l-\bu^{l-1}, \, \bu^l, \, \bQ \bE^l) \big|  \\
& \leq 
 \; 2 \big| {b}(\bu^l-\bu^{l-1},  \bQ \bE^l, \, \bu^l ) \big| + \frac{1}{2} \big| \big( [{\rm div}\, (\bu^l - \bu^{l-1})] \; \bQ \bE^l\, , \, \bu^l\big) \big|  \\
 & \leq  \; 2 \, \| \bu^l - \bu^{l-1}\|_{\LL^4}  \; |\nabla \bQ \bE^l|_{\LL^2}\; \|\bu^l\|_{\LL^4}  + \frac{1}{2} | \nabla (\bu^l-\bu^{l-1}|_{\LL^2} 
\|\bQ \bu^l\|_{\LL^4}\; \| \bu^l\|_{\LL^4} \\
& \leq \; \epsilon\,  \nu\; |\nabla \bE^l|^2_{\LL^2} + C(\nu, \epsilon) \| \bu^l - \bu^{l-1}\|^2_V\; \|\bu^l\|^2_V.
\end{align*}
Therefore, the upper estimate \eqref{C(T,2)} in Lemma \ref{moments_uN} imply
\begin{align}		\label{b(u-u)}
\EE\Big( \max_{1\leq n\leq m} &k \sum_{l=1}^n 1_{\Omega_{l-1}}
 \big[ \big| {b}(\bu^l-\bu^{l-1}, \, \bu^l, \, \bQ \bE^l) \big| + \big| \tilde{b}(\bu^l-\bu^{l-1}, \, \bu^l, \, \bQ \bE^l) \big| \big] \Big) \nonumber \\
 &\leq \, \epsilon\,  \nu\, \EE\Big(k  \sum_{l=1}^m 1_{\Omega_{l-1}} |\nabla \bE^l|_{\LL^2}^2\Big) + C(\nu, \epsilon) \, C_2(T,2)\, k.
\end{align}

%We then study the difference
{\bf Part 3: Estimate of the term $-\tilde{b}(\bu^{l-1}, \, \bu^l, \, \bQ \bE^l) + \tilde{b}(\bU^{l-1},\, \bU^l, \, \bQ \bE^l)$}\\
\noindent For every $l=1, \cdots, N$, we have
\begin{align*}
-\tilde{b}(\bu^{l-1}, \, \bu^l, \, \bQ \bE^l) &+ \tilde{b}(\bU^{l-1},\, \bU^l, \, \bQ \bE^l)\\
&=\; -\tilde{b}(\bu^{l-1},\, \bu^l-\bU^l,\, \bQ \bE^l) - \tilde{b}(\bu^{l-1}-\bU^{l-1},\, \bU^l, \bQ\bE^l).
\end{align*}
The antisymmetry properties of $b$ (resp. $\tilde{b}$) in \eqref{B} (resp.  \eqref{anti})  implies for $l=1, \cdots, N$
\begin{align} 	\label{tildeb-tildeb}
-\tilde{b}(\bu^{l-1}, \, \bu^l, \, \bQ \bE^l) + \tilde{b}(\bU^{l-1}, \, \bU^l, \, \bQ \bE^l) = \sum_{i=1}^4 \tilde{T}_i(l), \\
- {b}(\bu^{l-1}, \, \bu^l, \, \bQ \bE^l) + {b}(\bU^{l-1}, \, \bU^l, \, \bQ \bE^l) = \sum_{i=1}^4 {T}_i(l), \label{b-b}
\end{align} 
where 
\begin{align*}
\tilde{T}_1(l)=& \;  \tilde{b}(\bu^{l-1},\, \bE^l,\, \bE^l-\bQ \bE^l), \quad \tilde{T}_2(l)= \tilde{b}(\bE^{l-1},\, \bE^l,\, \bQ\bE^l - \bE^l), \\
\tilde{T}_3(l)=& \; -\tilde{b}(\bE^{l-1},\, \bu^{l-1},\, \bQ\bE^l), \quad \tilde{T}_4(l)=\tilde{b}(\bE^{l-1},\, \bQ\bE^l,\, \bu^l-\bu^{l-1}),
\end{align*}
and the terms $T_i(l)$, $i=1, \cdots, 4$ are similar to the corresponding terms $\tilde{T}_i(l)$ replacing $\tilde{b}$ by $b$. 
\smallskip

{\it Terms $\tilde{T}_1(l)$ and $T_1(l)$.} The antisymmetry of $\tilde{b}$ and the identity
\begin{equation}	\label{E-QE}
 \bE^l-\bQ \bE^l = \bu^l-\bQ \bu^l
 \end{equation}
%\eqref{E-QE}  
imply that $\tilde{T}_1(l)=-\tilde{b}(\bu^{l-1},\, \bu^l-\bQ\bu^l,\, \bE^l)$. Using the H\"older and
Gagliardo-Nirenberg inequalities,  \eqref{interpol} and \eqref{I-Q_1}, we deduce 
\begin{align*}		%\label{T1}
\big| b(\bu^{l-1},\, \bu^l - \bQ\bu^l,  \bE^l)\big| & \leq\;  \|\bu^{l-1}\|_{\LL^4} |\nabla(\bu^l-\bQ\bu^l)|_{\LL^2} \| \bE^l\|_{\LL^4} \\ % \nonumber \\
&\leq \; \sqrt{\bar{C}}  \, | \nabla \bE^l|_{\LL^2}^{\frac{1}{2}} |\bE^l|_{\LL^2}^{\frac{1}{2}} \|\bu^{l-1}\|_V\;  h \; | A\bu^l|_{\LL^2} \\ %\nonumber\\
 &\leq \; \epsilon \nu |\nabla \bE^l|_{\LL^2}^2 + %\bar{\epsilon} 
 |\bE^l|_{\LL^2}^2  + %\frac{(C\tilde{C})^2 }{8(\epsilon \nu\bar{\epsilon} )^{\frac{1}{2}}}
 C( \nu, \epsilon) \, h^2\, \|\bu^{l-1}\|_V^2 |A\bu^l|_{\LL^2}^2, 
\end{align*}
where the last upper estimate is deduced using Young's inequality. Furthermore, using once more the Gagliardo-Nirenberg inequality,
\eqref{I-Q_L4}  and then Young's inequality, we obtain
\begin{align*}
\big| \big( [{\rm div}\, \bu^{l-1}](\bu^l-\bQ\bu^l),\, \bE^l\big)\big| \leq &\; |\nabla \bu^{l-1}|_{\LL^2} \, \|\bu^l-\bQ\bu^l\|_{\LL^4} \|\bE^l\|_{\LL^4} \\
\leq & \;  \sqrt{\bar{C}}  \,  |\nabla \bE^l|_{\LL^2}^{\frac{1}{2}} |\bE^l|_{\LL^2}^{\frac{1}{2}} \|\bu^{l-1}\|_V 
\; h^{\frac{3}{2}} |A\bu^l|_{\LL^2} \\
\leq & \; \epsilon \nu |\nabla \bE^l|_{\LL^2}^2 + %\bar{\epsilon}
 |\bE^l|_{\LL^2}^2  + %\frac{\bar{C}^2 C^2 }{8(\epsilon \nu\bar{\epsilon} )^{\frac{1}{2}}}
 C( \nu, \epsilon) \, h^3\, \|\bu^{l-1}\|_V^2 |A\bu^l|_{\LL^2}^2. 
\end{align*}
Therefore, the Cauchy-Schwarz inequality, the upper estimates \eqref{C(T,q)} with $q=2$ and \eqref{C(T,4)} with $q=1$ imply for $m=1, \cdots, N$
\begin{align}		\label{tildeT1}
\EE\Big(& \max_{1\leq n\leq m} k \sum_{l=1}^n 1_{\Omega_{l-1}} \, |\tilde{T}_1(l)|\Big) \leq  
\; 2\epsilon \nu \EE\Big( k \sum_{l=1}^m 1_{\Omega_{l-1}} |\nabla \bE^l|_{\LL^2}^2\Big)  + 2 %\bar{\epsilon}
 \EE\Big( k \sum_{l=1}^m
1_{\Omega_{l-1}} |\bE^l|_{\LL^2}^2\Big) \nonumber \\
&  \quad  + C(\nu, \epsilon)\,  h^2 \, \Big\{ \EE\Big( \max_{0\leq l\leq N} \|\bu^l\|_V^4\Big) \Big\}^{\frac{1}{2}} 
\Big\{ \EE\Big( \nu k \sum_{l=1}^N |A\bu^l|_{\LL^2}^2 \Big)^2 \Big\}^{\frac{1}{2}}\nonumber \\
& \leq \; 2\epsilon \nu \EE\Big( k \sum_{l=1}^m 1_{\Omega_{l-1}} |\nabla \bE^l|_{\LL^2}^2 \Big)+ 2%\bar{\epsilon} 
\EE\Big( k \sum_{l=1}^m
1_{\Omega_{l-1}} |\bE^l|_{\LL^2}^2\Big)  + C(\nu,\epsilon) \, h^2. 
\end{align}
A similar computation implies
\begin{align}		\label{T1}
\EE\Big( \max_{1\leq n\leq m} k \sum_{l=1}^n 1_{\Omega_{l-1}} \, |{T}_1(l)|\Big) \leq  &\; 
\epsilon \nu \EE\Big( k \sum_{l=1}^m 1_{\Omega_{l-1}} |\nabla \bE^l|_{\LL^2}^2 \Big)+ %\bar{\epsilon} 
\EE\Big( k \sum_{l=1}^m
1_{\Omega_{l-1}} |\bE^l|_{\LL^2}^2\Big) \nonumber \\
& \quad + C(\nu,\epsilon) \, h^2. 
\end{align}
\smallskip

{\it Terms $\tilde{T}_2(l)$ and $T_2(l)$.} Using once more \eqref{E-QE},  we can replace the difference $\bE^l-\bQ \bE^l$ by
$\bu^l-\bQ \bu^l$.
The Gagliardo-Nirenberg inequality, \eqref{I-Q_L4} and Young's inequality imply
\begin{align*}
\big| b(\bE^{l-1}, &\, \bE^l,\, \bu^l-\bQ\bu^l)\big| \leq \; \sqrt{\bar{C}} |\nabla \bE^{l-1}|_{\LL^2}^{\frac{1}{2}} |\bE^{l-1}|_{\LL^2}^{\frac{1}{2}} 
|\nabla \bE^l|_{\LL^2} \|\bu^l-\bQ\bu^l\|_{\LL^4}\\
\leq &\; \epsilon \nu |\nabla \bE^{l-1}|_{\LL^2}^2 + \epsilon \nu |\nabla \bE^l|_{\LL^2}^2 + C(\nu,\epsilon) \, h^4 \, |\bE^{l-1}|_{\LL^2}^2 
|A\bu^l|_{\LL^2}^2 |\nabla \bu^l|_{\LL^2}^2\\
\leq &\; \epsilon \nu |\nabla \bE^{l-1}|_{\LL^2}^2 + \epsilon \nu |\nabla \bE^l|_{\LL^2}^2 + C(\nu,\epsilon) \, h^4 \big( |\bu^{l-1}|_{\LL^2}^2
+ |\bU^{l-1}|_{\LL^2}^2\big) |A\bu^l|_{\LL^2}^2 |\nabla \bu^l|_{\LL^2}^2.
\end{align*}
Furthermore, the H\"older and Gagliardo-Nirenberg inequalities together with \eqref{I-Q_L4} and Young's inequality yield 
\begin{align*}
\big| \big( [{\rm div}\, \bE^{l-1}] \, &\bE^l,\, \bu^l-\bQ\bu^l\big)\big| \leq  \; \sqrt{\bar{C}}  |\nabla \bE^{l-1}|_{\LL^2} |\nabla \bE^l|_{\LL^2}^{\frac{1}{2}}
|\bE^l|_{\LL^2}^{\frac{1}{2}} \| \bu^l-\bQ\bu^l\|_{\LL^4}\\
\leq & \; \epsilon \nu |\nabla \bE^{l-1}|_{\LL^2}^2 + \epsilon \nu |\nabla \bE^l|_{\LL^2}^2 + C(\nu,\epsilon) \, h^4 \, |\bE^{l}|_{\LL^2}^2 
|A\bu^l|_{\LL^2}^2 |\nabla \bu^l|_{\LL^2}^2\\
\leq &\; \; \epsilon \nu |\nabla \bE^{l-1}|_{\LL^2}^2 + \epsilon \nu |\nabla \bE^l|_{\LL^2}^2 + C(\nu,\epsilon) \, h^4 \big( |\bu^{l}|_{\LL^2}^2
+ |\bU^{l}|_{\LL^2}^2\big) |A\bu^l|_{\LL^2}^2 |\nabla \bu^l|_{\LL^2}^2.
\end{align*} 
Since the family of sets $\Omega_l$ is decreasing, we have $1_{\Omega_{l-1}} |\nabla \bE^{l-1}|_{\LL^2}^2 \leq  
1_{\Omega_{l-2}} |\nabla \bE^{l-1}|_{\LL^2}^2 $ for $l=2, \cdots, N$. Therefore, 
 using \eqref{U0}, the Cauchy-Schwarz and Young  inequalities, \eqref{C(T,q)} and \eqref{U2} with $q=3$, and \eqref{C(T,4)} with $q=1$, we deduce
\begin{align}			\label{tildeT2}
\EE\Big(& \max_{1\leq n\leq m} k \sum_{l=1}^n 1_{\Omega_{l-1}} |\tilde{T}_2(l)|\Big) \leq \; 2\nu \epsilon  k\, \EE(|\nabla \bE^0|_{\LL^2}^2)
+ 4\nu \epsilon \EE\Big(k  \sum_{l=1}^m 1_{\Omega_{l-1}}|\nabla \bE^l|_{\LL^2}^2\Big) \nonumber  \\
&\quad + C(\nu, \epsilon) \, h^4\, \EE\Big[ \Big( \max_{0\leq l\leq N} \big\{ |\bu^l|_{\LL^2}^2+|\bU^l|_{\LL^2}^2\big\} \Big) 
\Big( \max_{1\leq l\leq N} \|\bu^l\|_V^2\Big)
\Big( \nu k \sum_{l=1}^N |A\bu^l|^2_{\LL^2}\Big) \Big] \nonumber \\
\leq &\; 2\nu \epsilon C k + 4\nu \epsilon \EE\Big(k  \sum_{l=1}^m 1_{\Omega_{l-1}}|\nabla \bE^l|_{\LL^2}^2\Big)\nonumber \\
&\; + C(\nu, \epsilon)\, h^4\,  \Big[ \Big\{ \EE\Big( \max_{0\leq l \leq N} \|\bu^l\|_V^8\Big)\Big\}^{\frac{1}{2}}  + \Big\{ \EE \Big( 
\max_{0\leq l \leq N} |\bU^l|_{\LL^2}^8\Big)\Big\}^{\frac{1}{2}} \Big] 
\Big\{ \EE\Big[ \Big( \nu k \sum_{l=1}^N |A\bu^l|_{\LL^2} \Big)^2\Big] \Big\}^{\frac{1}{2}} \nonumber \\
\leq &\;   4\nu \epsilon \EE\Big(k  \sum_{l=1}^m 1_{\Omega_{l-1}}|\nabla \bE^l|_{\LL^2}^2\Big) +  C(\nu,\epsilon) \Big[ k + \, h^4\Big].
\end{align}
A similar computation yields
\begin{equation}		\label{T2}
\EE\Big( \max_{1\leq n\leq m} k \sum_{l=1}^n 1_{\Omega_{l-1}} |{T}_2(l)|\Big) \leq \; 
2\nu \epsilon \EE\Big(k   \sum_{l=1}^m 1_{\Omega_{l-1}}|\nabla \bE^l|_{\LL^2}^2\Big) +  C(\nu,\epsilon) \big( k+ \, h^4\big).
\end{equation} 
\smallskip

{\it Terms $\tilde{T}_3(l)$ and $T_3(l)$.}  
The Gagliardo-Nirenberg inequality \eqref{interpol} implies
\[ \big| b(\bE^{l-1},\, \bu^{l-1},\, \bQ\bE^l)\big| \leq \bar{C} |\nabla \bE^{l-1}|_{\LL^2}^{\frac{1}{2}} |\bE^{l-1}|_{\LL^2}^{\frac{1}{2}} 
|\nabla \bu^{l-1}|_{\LL^2} |\nabla \bE^l|_{\LL^2}^{\frac{1}{2}} |\bE^l|_{\LL^2}^{\frac{1}{2}}.
\]
If $l=1$, Young's inequality implies 
\begin{align*}		
  \big| b(\bE^{0},\, \bu^{0},&\, \bQ\bE^1)\big| \leq \;  \bar{C}  \big( |\nabla \bE^{0}|_{\LL^2}^{\frac{1}{2}} \| \bu^{0}\|_V^{\frac{1}{2}}  
  |\bE^0|_{\LL^2}^{\frac{1}{2}}\big) \;  
 \big( \|\bu^0\|_V^{\frac{1}{2}} |\bE^1|_{\LL^2}^{\frac{1}{2}} \big) \; |\nabla \bE^1|_{\LL^2}^{\frac{1}{2}}  \\
 \leq &\; \epsilon \nu \, |\nabla \bE^1|_{\LL^2}^2 + \|\bu^0\|_V^2 |\bE^1|_{\LL^2}^2 + %\frac{\bar{C}^4}{2^3 \sqrt{\nu\epsilon} }
 C(\nu, \epsilon) \, \|\bu^0\|_V 
\big[  |\nabla \bu^0|_{\LL^2} + |\nabla \bU^0|_{\LL^2}\big]   |\bE^0|_{\LL^2}.
\end{align*}
Hence  H\"older's inequality, the upper estimates \eqref{U0}, \eqref{C(T,q)} for $q=2$ and \eqref{U2} for $q=2$ and imply
\begin{align} 	\label{T3-0}
k \EE\big( 1_{\Omega_0}  \big| T_3(1) \big|\big) \leq &\; k \epsilon \nu\,  \EE\big( 1_{\Omega_0}
|\nabla \bE^1|_{\LL^2}^2\big) + C k \big\{ \EE\big( \|\bu^0\|_V^4\big) \big\}^{\frac{1}{2}} \big\{ \EE \big(  |\bu^1|_{\LL^2}^2+| \bU^1|_{\LL^2}^2 \big) 
\big\}^{\frac{1}{2}} \nonumber \\
&+ k \, %\frac{\bar{C}^4}{2^3 \sqrt{\nu\epsilon} }
C(\nu, \epsilon)\,  \Big\{ \EE (|\bE^0|_{\LL^2}^2\big)\Big\}^{\frac{1}{2}}
\Big\{ \EE\big(\|\bu^0\|_V^4\big)\Big\}^{\frac{1}{4}} \Big\{ 2^3 \EE\big( |\bu^0|_{\LL^2}^4\big) + 2^3 \EE\big( |\bU^0|_{\LL^2}^4\big)\Big\}^{\frac{1}{4}}
\nonumber \\
\leq & \, k \epsilon \nu \, \EE\big( 1_{\Omega_0}
|\nabla \bE^1|_{\LL^2}^2\big) %+ k \EE\big( 1_{\Omega_0} \|\bu^0\|_V^2 |\bE^1|_{\LL^2}^2\big) 
+ C(\nu,\epsilon) k (1+h),
\end{align}
for some constant $C(\nu,\epsilon)$ depending on $\bar{C}$, $\epsilon$, $\nu$, $C_1(T,2)$ and ${\bf C}_1(T,2)$. 

For $l=2, \cdots, N$,  using \eqref{interpol}, the H\"older and Young inequalities we obtain for $\epsilon, \epsilon_1  >0$
\begin{align*}
\big| b(&\bE^{l-1},\, \bu^{l-1},\, \bQ\bE^l)\big| \leq \,   \bar{C}  |\nabla \bE^{l-1}|_{\LL^2}^{\frac{1}{2}} |\nabla \bE^l|_{\LL^2}^{\frac{1}{2}} 
\Big( \|\bu^{l-1}\|_V^{\frac{1}{2}} |\bE^l|_{\LL^2}^{\frac{1}{2}}\Big) \Big(  \|\bu^{l-2}\|_V^{\frac{1}{2}} |\bE^{l-1}|_{\LL^2}^{\frac{1}{2}}\Big)  \\
&\quad   +  \bar{C} \,  |\nabla \bE^{l-1}|_{\LL^2}^{\frac{1}{2}} |\nabla \bE^l|_{\LL^2}^{\frac{1}{2}} 
\Big( \|\bu^{l-1}\|_V^{\frac{1}{2}} |\bE^l|_{\LL^2}^{\frac{1}{2}}\Big) \Big(  \|\bu^{l-1}- \bu^{l-2}\|_V^{\frac{1}{2}} |\bE^{l-1}|_{\LL^2}^{\frac{1}{2}}\Big) \\
\leq & \; \epsilon_1 \nu |\nabla \bE^{l-1}|_{\LL^2}^2 + \epsilon_1 \nu |\nabla \bE^{l}|_{\LL^2}^2 
+ \frac{  \bar{C}^2  }{2^4 \epsilon_1 \nu} \|\bu^{l-1}\|_V^2 
|\bE^l|_{\LL^2}^2 + \frac{ \bar{C}^2 }{2^4 \epsilon_1 \nu} \|\bu^{l-2}\|_V^2  |\bE^{l-1}|_{\LL^2}^2 \\
&\quad +  \epsilon \nu |\nabla \bE^{l-1}|_{\LL^2}^2 + \epsilon \nu |\nabla \bE^{l}|_{\LL^2}^2 +  \|\bu^{l-1}\|_V^2 
|\bE^l|_{\LL^2}^2 +  \frac{ \bar{C}^4  }{2^8\epsilon^2 \nu^2} \|\bu^{l-1}- \bu^{l-2}\|_V^2   |\bE^{l-1}|_{\LL^2}^2 . 
\end{align*} 
We next use the upper estimates \eqref{C(T,q)} and \eqref{U2} with $q=2$, and \eqref{C(T,4)} with $q=1$ and the inequality
$1_{\Omega_{l-1}}  \leq 1_{\Omega_{l-2}} $.  This yields for $m=2, \cdots, N$
\begin{align}			\label{T3-2-N}
\EE&\Big( \max_{2\leq n\leq m} k \sum_{l=2}^n  1_{\Omega_{l-1}}  |T_3(l) | \Big) \leq  2(\epsilon_1 + \epsilon)\,  \nu  \EE\Big( k 
\sum_{l=1}^m 1_{\Omega_{l-1}}
|\nabla \bE^l|_{\LL^2}^2  \Big) \nonumber \\
&\quad +  \Big[ 1+ \frac{2 \,  \bar{C}^2 }{2^4\epsilon_1 \nu} \Big]  \EE\Big( k \sum_{l=1}^m 1_{\Omega_{l-1}} \|\bu^{l-1}\|_V^2
|\bE^l|_{\LL^2}^2 \Big) \nonumber \\
&\;  + C(\epsilon, \nu)k   \Big\{ \EE\Big( \max_{1\leq l\leq N}  |\bu^l|_{\LL^2}^4  +  
 \max_{1\leq l\leq N} |\bU^l|_{\LL^2}^4  \Big) \Big\}^{\frac{1}{2}} \Big\{ \EE \Big[ \Big( \sum_{l=1}^N \| \bu^l-\bu^{l-1}\|_V^2\Big)^2 \Big]
\Big\}^{\frac{1}{2}}\nonumber \\
&\leq 2( \epsilon_1+\epsilon)\,  \nu  \EE\Big( k \sum_{l=1}^m 1_{\Omega_{l-1}}
|\nabla \bE^l|_{\LL^2}^2  \Big) +  \Big[1+  \frac{  \bar{C}^2  }{2^3\epsilon_1 \nu}\Big]  
\EE\Big(  k\sum_{l=1}^m 1_{\Omega_{l-1}} \|\bu^{l-1}\|_V^2
|\bE^l|_{\LL^2}^2 \Big) \nonumber \\
&\;
 + C(\nu,\epsilon)\, k.
\end{align}
%for some constant $C$ depending on $\epsilon$, $\nu$, $\bar{C}$, $C_1(T,2)$, ${\bf C}_1(T,2)$ and $C_3(T,1)$. 
The upper estimates \eqref{T3-0} and \eqref{T3-2-N} imply for $\epsilon, \epsilon_1 >0$ and $h\in(0,1]$ 
\begin{align}		\label{T3}
\EE\Big( k\max_{1\leq n\leq m}  &\sum_{l=1}^n  1_{\Omega_{l-1}}  |T_3(l) | \Big) \leq 
(2 \epsilon_1+3\epsilon)  \, \nu \,    \EE\Big( k  \sum_{l=1}^m 1_{\Omega_{l-1}}
|\nabla \bE^l|_{\LL^2}^2  \Big) \nonumber  \\
&+   \Big[1+  \frac{\bar{C}^2}{2^3\epsilon_1 \nu}\Big]    \EE\Big( k \sum_{l=1}^m 1_{\Omega_{l-1}} \|\bu^{l-1}\|_V^2
|\bE^l|_{\LL^2}^2 \Big) + C(\nu,\epsilon) \, k.
\end{align}
Furthermore, the Gagliardo-Nirenberg  \eqref{interpol} and Young inequalities imply  for $\epsilon_2 >0$ 
\begin{align*}
\big| \big( [{\rm div}\, \bE^{l-1}]\, \bu^{l-1},\, \bQ\bE^l\big)\big| \leq &\;  \frac{ \bar{C}}{\sqrt{2}  } |\nabla \bE^{l-1}|_{\LL^2}
\| \bu^{l-1}\|_V    |\nabla \bE^{l}|_{\LL^2}^{\frac{1}{2}}  |\bE^{l}|_{\LL^2}^{\frac{1}{2}}\\
  \leq & \; \epsilon_2 \nu |\nabla \bE^{l-1}|_{\LL^2}^2 + \epsilon_2 \nu |\nabla \bE^l|_{\LL^2}^2 +  
  \frac{\bar{C}^4}{2^{8} (\epsilon_2 \nu)^3}  \|\bu^{l-1}\|_V^4
  |\bE^l|_{\LL^2}^2. 
\end{align*}
Using once more the inequality $1_{\Omega_{l-1}} |\nabla \bE^{l-1}|_{\LL^2}^2 \leq 1_{\Omega_{l-2}} |\nabla \bE^{l-1}|_{\LL^2}^2 $, we deduce for
any $m=1, \cdots, N$
\begin{align}		\label{divT3}
\EE\Big( \max_{1\leq n\leq m} k \sum_{l=1}^n  1_{\Omega_{l-1}} \big| \big( [{\rm div}\, &\bE^{l-1}]\, \bu^{l-1},\, \bQ\bE^l\big)\big| \Big)  \leq 
2\epsilon_2 \nu \EE\Big(  k \sum_{l=1}^m 1_{\Omega_{l-1}} |\nabla \bE^l|_{\LL^2}^2\Big) \nonumber \\
&+   \frac{\bar{C}^4}{2^{8} (\epsilon_2 \nu)^3}  k \EE\Big( \sum_{l=1}^m 1_{\Omega_{l-1}} \|\bu^{k-1}\|_V^4  |\bE^l|_{\LL^2}^2\Big).
\end{align}
The upper estimates \eqref{T3} and \eqref{divT3} imply for $h\in (0,1]$, $\epsilon, \epsilon_1, \epsilon_2>0$, 
\begin{align}		\label{tildeT3}
\EE\Big( \max_{1\leq n\leq m} &k \sum_{l=1}^n 1_{\Omega_{l-1}} |\tilde{T}_3(l)| \Big) \leq  \; (2\epsilon_1+2\epsilon_2+3\epsilon)\, 
 \nu \, \EE\Big(k  \sum_{l=1}^m  1_{\Omega_{l-1}} |\nabla \bE^l|_{\LL^2}^2\Big) + Ck \nonumber \\
&+  \EE\Big\{ k \sum_{l=1}^m 1_{\Omega_{l-1}} \Big[  \Big( 1+\frac{\bar{C}^2}{2^3 \epsilon_1 \nu} \Big)  \|\bu^{l-1}\|_V^2
+  \frac{\bar{C}^4}{2^{8} (\epsilon_2 \nu)^3}  \|\bu^{l-1}\|_V^4 \Big] |\bE^l|_{\LL^2}^2\Big\}.
\end{align}
\smallskip

{\it Terms $\tilde{T}_4(l)$ and $T_4(l)$.}  The Gagliardo-Nirenberg and Young inequalities imply
\begin{align*}
\big| b& \big( \bE^{l-1},\, \bQ\bE^l,\, \bu^l-\bu^{l-1} \big)\big| \leq \frac{\bar{C}}{\sqrt{2}}
  \| \nabla \bE^{l-1}\|_{\LL^2}^{\frac{1}{2}} |\bE^{l-1}||_{\LL^2}^{\frac{1}{2}}
 |\nabla \bQ\bE^l|_{\LL^2} \| \bu^l-\bu^{l-1}\|_V \\
\leq &\;  \epsilon \nu |\nabla \bE^{l-1}|_{\LL^2}^2 + \epsilon \nu |\nabla \bE^l|_{\LL^2}^2 
+ C(\epsilon, \nu)   |\bE^{l-1}|_{\LL^2}^2  \| \bu^l-\bu^{l-1}\|_V^4 \\ % \max_{1\leq l\leq N} |\bu^l|_{\LL^2}^2\\
\leq &\; \epsilon \nu\big(  |\nabla \bE^{l-1}|_{\LL^2}^2 +  |\nabla \bE^l|_{\LL^2}^2 \big)
+ C(\epsilon,\nu) \big( |\bu^{l-1}|_{\LL^2}^2 + |\bU^{l-1}|_{\LL^2}^2\big) \big( \| \bu^{l}\|_V^2 + \| \bu^{l-1}\|_V^2\big) \| \bu^l-\bu^{l-1}\|_V^2.
\end{align*}
For $l=1$ the inequality  \eqref{U0} imples 
\[ k \epsilon \nu  \EE \big(|\nabla \bE^0|_{\LL^2}^2 \big) \leq C  k \nu \epsilon.\]
Furthermore, the Cauchy-Schwarz inequality together with the upper estimates \eqref{C(T,q)},  \eqref{U2}  for $q=3$,  \eqref{C(T,4)} 
 for $q=1$,    
and  the inequality $1_{\Omega_{l-1}}  \leq 1_{\Omega_{l-2}}  $ imply
\begin{align}		\label{T4}
\EE\Big(k  & \max_{1\leq n\leq m} \sum_{l=1}^n 1_{\Omega_{l-1}}  |T_4(l)|\Big) \leq \; 
2\epsilon \nu \EE\Big(k  \sum_{l=1}^m 1_{\Omega_{l-1}} |\nabla \bE^l|_{\LL^2}^2 \Big) 
+ C(\nu,\epsilon) k  \nonumber \\
 & \quad  + C(\epsilon, \nu) k 
 \Big\{ \EE\Big( \Big| \sum_{l=1}^N \|\bu^l-\bu^{l-1}\|_V^2\Big|^2\Big) \Big\}^{\frac{1}{2}} 
 \Big\{ \EE\Big( \max_{1\leq l\leq N} \|\bu^l\|_V^8 + \max_{1\leq l\leq m} |\bU^l|_{\LL^2}^8  \Big\}^{\frac{1}{2}}\nonumber \\
 \leq &\;  2\epsilon \nu \EE\Big(k  \sum_{l=1}^m 1_{\Omega_{l-1}} |\nabla \bE^l|_{\LL^2}^2 \Big) + C(\nu,\epsilon) k.
\end{align}
On the other hand, the H\"older, Gagliardo-Nirenberg and Young inequalities imply
\begin{align*}
\big| \big( [{\rm div} \, \bE^{l-1}]\, &\bQ\bE^l, \, \bu^l-\bu^{l-1}\big)\big| \leq |\nabla \bE^{l-1}|_{\LL^2} \|\bE^l\|_{\LL^4} \|\bu^l-\bu^{l-1}\|_{\LL^4}\\
\leq & \; \bar{C} \,   |\nabla \bE^{l-1}|_{\LL^2} |\nabla \bE^l|_{\LL^2}^{\frac{1}{2}} \Big( |\bE^l|_{\LL^2}^{\frac{1}{2}} 
|\nabla (\bu^l-\bu^{l-1})|_{\LL^2}^{\frac{1}{2}} \max_{0\leq l\leq N} \| \bu^l\|_V^{\frac{1}{2}}\Big) \\
\leq & \; \epsilon \nu |\nabla \bE^{l-1}|_{\LL^2}^2 + \epsilon \nu |\nabla \bE^l|_{\LL^2}^2 + C(\nu, \epsilon) \Big[ \max_{0\leq l\leq N} \|\bu^l\|_V^4
+ \max_{0\leq l\leq N} |\bU^l|_{\LL^2}^4\Big] \| \bu^l-\bu^{l-1}\|_V^2. 
\end{align*}
A similar argument as for the upper estimate of $T_4(l)$, based on \eqref{U0},  \eqref{C(T,q)} and \eqref{U2} with $q=3$,  \eqref{C(T,4)} with $q=1$, 
and $1_{\Omega_{l-1}}\leq 1_{\Omega_l}$  yield
\begin{align}		\label{divT4}
\EE\Big( &\max_{1\leq n\leq m} k \sum_{l=1}^n 1_{\Omega_{l-1}} \big| \big( [{\rm div} \, \bE^{l-1}]\, \bQ\bE^l, \, \bu^l-\bu^{l-1}\big)\big| \Big) \nonumber \\
\leq & \; 2\epsilon \nu  \EE\Big( k \sum_{l=1}^m 1_{\Omega_{l-1}} |\nabla \bE^l|_{\LL^2}^2\Big) + \epsilon \nu C k 
 \EE\big(|\nabla \bE^0|_{\LL^2}^2\big)\nonumber \\
 & \; + C(\nu, \epsilon) k \Big[ \Big\{ \EE\big( \max_{1\leq l\leq N} \|\bu^l\|_V^8\Big) \Big\}^{\frac{1}{2}} +
 \Big\{ \EE\big( \max_{1\leq l\leq N} |\bU^l|_{\LL^2}^8\Big) \Big\}^{\frac{1}{2}}\Big] \Big\{ \Big| \EE\Big(  \sum_{l=1}^N |\bu^l-\bu^{l-1}\|_V^2\Big) \Big|^2
 \Big\}^{\frac{1}{2}} \nonumber \\
 \leq & \;  2\epsilon \nu  \EE\Big( k \sum_{l=1}^m 1_{\Omega_{l-1}} |\nabla \bE^l|_{\LL^2}^2\Big) + C(\nu, \epsilon)  k.
\end{align}
The upper estimates \eqref{T4} and \eqref{divT4} imply
\begin{align}		\label{tildeT4}
\EE\Big( \! \max_{1\leq n\leq m} \sum_{l=1}^m 1_{\Omega_{l-1}}|\tilde{T}_4(l)|\! \Big) \leq &\; 
  4\epsilon \nu \EE\Big(k  \sum_{l=1}^m 1_{\Omega_{l-1}} |\nabla \bE^l|_{\LL^2}^2\! \Big)  + C(\nu,\epsilon) k. 
\end{align}
The upper estimates \eqref{tildeT1}, \eqref{tildeT2}, \eqref{tildeT3} and \eqref{tildeT4} imply that for any $m=1, \cdots, N$,
\begin{align}		\label{tildeb-tildeb}
\EE\Big(&  \max_{1\leq n\leq m} k\sum_{l=1}^n 1_{\Omega_{l-1}} \big| \tilde{b}(\bu^{l-1},\, \bu^l,\, \bQ\bE^l) 
- \tilde{b}(\bU^{l-1},\, \bU^l,\, \bQ\bE^l)\big| \Big) \nonumber \\
 \leq &  \; C(\nu,\epsilon)\Big( k +  h^2+h^4\Big)  
 +  (2\epsilon_1+2\epsilon_2+13\epsilon) \, \nu \,  \EE\Big(k  \sum_{l=1}^m 1_{\Omega_{l-1}} |\nabla \bE^l|_{\LL^2}^2 \Big) \nonumber \\
&\; + \EE\Big[ k \sum_{l=1}^m 1_{\Omega_{l-1}}   \Big\{ 2+   \Big( 1+\frac{\bar{C}^2}{2^3 \epsilon_1 \nu} \Big)  \|\bu^{l-1}\|_V^2
+ \frac{\bar{C}^4}{2^{8} \epsilon_2^3 \nu^3}  \|\bu^{l-1}\|_V^4 \Big\}  |\bE^l|_{\LL^2}^2\Big].
\end{align}
Similarly, the upper estimates \eqref{T1}, \eqref{T2}, \eqref{T4} and \eqref{T4} imply that for $m=1, \cdots, N$,
\begin{align}		\label{b-b}
\EE\Big( \max_{1\leq n\leq m} &k  \sum_{l=1}^n 1_{\Omega_{l-1}} \big| {b}(\bu^{l-1},\, \bu^l,\, \bQ\bE^l) 
- {b}(\bU^{l-1},\, \bU^l,\, \bQ\bE^l)| \Big) \nonumber \\
 \leq &  \; C(\nu,\epsilon)\Big( k +  h^2+h^4\Big)  
 +  (2 \epsilon_1+8\epsilon) \,  \nu \,  \EE\Big(k  \sum_{l=1}^m 1_{\Omega_{l-1}} |\nabla \bE^l|_{\LL^2}^2 \Big) \nonumber \\
&\; + \EE\Big[ k \sum_{l=1}^m 1_{\Omega_{l-1}}   \Big\{ 1+   \Big( 1+\frac{\tilde{C}^2}{2^3 \epsilon_1 \nu} \Big) \|\bu^{l-1}\|_V^2
 \Big\}  |\bE^l|_{\LL^2}^2\Big].
\end{align}

\noindent {\bf Part 4: Estimate of $(\pi^l, {\rm div}\, \bE^l)$} \\
Since $\bQ \bE^l\in \VV_h$ and $P^0_h \pi^l\in L_h$, using \eqref{Vh} we deduce 
$(\pi^l , {\rm div}\, \bQ \bE^l) = (\pi^l- P^0_l \pi^l, {\rm div}\, \bQ \bE^l)$.
Therefore, the Cauchy-Schwarz and Young inequalities  coupled with  \eqref{I-P} 
 imply
\begin{align}		\label{error-pressure-W}
 \EE\Big(k  \sum_{l=1}^m  &1_{\Omega_{l-1}} \big| \big( \pi^l, \, {\rm div}\, \bQ \bE^l\big)\big|\Big) \leq  \; 
  \EE\Big( k \sum_{l=1}^m 1_{\Omega_{l-1}} \big[ \epsilon \, \nu\, |\nabla \bE^l|_{\LL^2}^2 %\Big) \nonumber \\ 
 + \frac{1}{4\epsilon \nu}  |\pi^l - P^0_h \pi^l|_{L^2}^2\big] \Big) \nonumber \\
 \leq &\;  \epsilon\, \nu\,  \EE\Big( k \sum_{l=1}^m 1_{\Omega_{l-1}} |\nabla \bE^l|_{\LL^2}^2 \Big) + 
 C(\nu, \epsilon )  h^2 \EE\Big( k \sum_{l=1}^m  1_{\Omega_{l-1}} |\nabla \pi^l|_{L^2}^2\Big). 
\end{align}

\section{Convergence in probability}\label{s4bis}  
  This section contains  results  on strong convergence  localized on a subset of $\Omega$ due to the non linear term  
  for both algorithms 1 and 2.  
 This is classical when dealing with a non linear term. However, 
  unlike \cite{CarPro}, our bound in the localization only involves the time discretization $\{\bu^l\}_l$.  In the general case, it will require an upper bound of  the fourth
power of its $V$ norm. Arguments similar to that used in \cite{CarPro} or \cite{BeBrMi}, we could deduce a rate of convergence in probability. We do not provide
this rate since our paper focuses on $L^2(\Omega)$ rates of convergence.

The following proposition is one of the main results of this section. 
\begin{prop} 		\label{loc-conv}
Let $G$ satisfy the growth and Lipschitz conditions {\bf (G1)}. Let $u_0\in L^8(\Omega; V)$ and $\bU^0\in L^8(\Omega; \HH_h)$ be 
${\mathcal F}_0$-measurable random variables such that \eqref{U0} holds. For every $M>0$ and $l=0, \cdots, N$, set
\begin{equation}		\label{tildeOmega}
\tilde{\Omega}_l(M):= \Big\{ \omega \in \Omega \, : \, \max_{0\leq j\leq l} \|\bu^j\|_V^4 \leq M\Big\}.
\end{equation}
There exists $C_0>0$ such that for $k=\frac{T}{N}>0$ small enough to have $kM\leq C_0$, we have for any $\delta >0$ 
and $m=1, \cdots, N$ 
 \begin{equation}		\label{loc_mom_gene}
\EE\Big( 1_{\tilde{\Omega}_{m-1}(M)} \max_{1\leq n\leq m}  |\bE^n|_{\LL^2}^2 \Big)  \leq  %C(\nu,\bar{\epsilon},\delta)  
C(\nu)\, e^{\tilde{C}_1(M) T} 
 \Big[ k+ h^2 %\nonumber  \\
+  h^2\; \EE\Big(k  \sum_{l=1}^m   %1_{\tilde{\Omega}_{l-1}(M)} 
\big| \nabla \pi^l\big|_{L^2}^2\Big)\Big],
\end{equation}
for some constant 
\begin{align} 	\label{C1(M)gene}
\tilde{C}_1(M) & = (1+\delta) \frac{\bar{C}^4}{2^4 \nu^3} M 
+ C (\nu, L_1 \mbox{\rm Tr}\, Q, \delta)\nonumber \\
& \equiv (1+\delta) \frac{\bar{C}^4}{2^4 \nu^3} M  \quad   \mbox{\rm for ``large"}\; M.  % \; \mbox{\rm and  ``small" kM}.  
\end{align} 
Furthermore, 
\begin{equation}		\label{loc_mom_gene_nabla}
\EE\Big( \! k \!\sum_{l=1}^m 1_{\tilde{\Omega}_{l-1}(M)} |\nabla \bE^l|_{\LL^2}^2 \! \Big) \leq
%C(\nu, \bar{\epsilon},\delta)  
C(\nu) \tilde{C}_1(M) T  e^{\tilde{C}_1(M)T} 
\Big[ k+ h^2 +  h^2 \EE\Big(\! k  \!\sum_{l=1}^m %1_{\tilde{\Omega}_{l-1}(M)} 
\big| \nabla \pi^l\big|_{L^2}^2 \Big)\Big].
\end{equation} 
 \end{prop}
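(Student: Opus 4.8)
The plan is to start from the energy-type identity \eqref{E^m-E^(m-1)} and to work on the localized version: multiply by $1_{\tilde\Omega_{l-1}(M)}$ and sum over $l=1,\dots,n$. Since $\tilde\Omega_l(M)$ is a decreasing family of $\mathcal F_{t_l}$-adapted sets, the sets $\Omega_l:=\tilde\Omega_l(M)$ fit the framework of \eqref{Omegal}, so all the estimates of Part~1 through Part~4 of Section~\ref{s4} apply verbatim with this particular choice. The only term in \eqref{E^m-E^(m-1)} that was not treated in Section~\ref{s4} is the stochastic term $\big(\big[G(\bu^{l-1})-G(\bU^{l-1})\big]\Delta_l W,\bQ\bE^l\big)$; I would split it, as usual, as $\big(\big[G(\bu^{l-1})-G(\bU^{l-1})\big]\Delta_l W,\bQ\bE^{l-1}\big)$ plus $\big(\big[G(\bu^{l-1})-G(\bU^{l-1})\big]\Delta_l W,\bQ\bE^l-\bQ\bE^{l-1}\big)$. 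The first piece is a martingale increment (the integrand is $\mathcal F_{t_{l-1}}$-measurable and so is $1_{\tilde\Omega_{l-1}(M)}$), so after taking $\max_n$ and expectation I would apply the Burkholder--Davis--Gundy inequality; its quadratic variation is controlled, using \eqref{LipG_W}, by $L_1\,\mathrm{Tr}\,Q\,k\sum_l 1_{\tilde\Omega_{l-1}(M)}|\bE^{l-1}|_{\LL^2}^2$ plus the bias coming from $\bE^{l-1}-\bQ\bE^{l-1}=\bu^{l-1}-\bQ\bu^{l-1}$, the latter being $O(h^2)$ by \eqref{I-Q_1} and the moment bound \eqref{C(T,q)}. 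The cross term with $\bQ\bE^l-\bQ\bE^{l-1}$ is absorbed into $\tfrac12|\bQ(\bE^l-\bE^{l-1})|_{\LL^2}^2$ on the left-hand side plus a term $C\,k\,L_1\mathrm{Tr}\,Q\,\big(|\bE^{l-1}|_{\LL^2}^2 + h^2\|\bu^{l-1}\|_V^2\big)$ after Young's inequality and taking conditional expectation.

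Next I would collect all the error terms. After choosing $\epsilon,\epsilon_1,\epsilon_2$ small enough that the coefficient $(2\epsilon_1+2\epsilon_2+13\epsilon)\nu$ of $\EE(k\sum 1_{\tilde\Omega_{l-1}(M)}|\nabla\bE^l|_{\LL^2}^2)$ coming from \eqref{tildeb-tildeb}, together with the $\epsilon\nu$ contributions from \eqref{E1} and \eqref{error-pressure-W} and the BDG term, is strictly smaller than the good term $\nu k\sum 1_{\tilde\Omega_{l-1}(M)}|\nabla\bE^l|_{\LL^2}^2$ on the left (say bounded by $\tfrac{\nu}{2}$ of it), the gradient terms can be moved to the left. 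What remains on the right, besides the $O(k)+O(h^2)+O\big(h^2\,\EE(k\sum|\nabla\pi^l|_{L^2}^2)\big)$ data terms, is the key nonlinear feedback term
\[
\EE\Big[k\sum_{l=1}^m 1_{\tilde\Omega_{l-1}(M)}\Big\{2+\Big(1+\tfrac{\bar C^2}{2^3\epsilon_1\nu}\Big)\|\bu^{l-1}\|_V^2+\tfrac{\bar C^4}{2^8\epsilon_2^3\nu^3}\|\bu^{l-1}\|_V^4\Big\}|\bE^l|_{\LL^2}^2\Big]
\]
plus the $L_1\mathrm{Tr}\,Q$ term. On the set $\tilde\Omega_{l-1}(M)$ we have $\|\bu^{l-1}\|_V^4\le M$ and hence $\|\bu^{l-1}\|_V^2\le\sqrt M$, so this whole bracket is bounded by a deterministic constant of the form $\tfrac{\bar C^4}{2^8\epsilon_2^3\nu^3}M+C(\nu,\epsilon_1)\sqrt M+C(\nu,L_1\mathrm{Tr}\,Q)$. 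Crucially, the dominant quartic coefficient $\tfrac{\bar C^4}{2^8\epsilon_2^3\nu^3}$ can be made as close as desired to $\tfrac{\bar C^4}{2^4\nu^3}$ by letting $\epsilon_2\uparrow(\tfrac12)^{1/3}$, which is exactly what produces the constant $(1+\delta)\tfrac{\bar C^4}{2^4\nu^3}M$ in \eqref{C1(M)gene}; the $\delta$ encodes the margin needed to keep the gradient terms absorbed. This gives, writing $X_n:=\EE\big(1_{\tilde\Omega_{n-1}(M)}\max_{1\le j\le n}|\bE^j|_{\LL^2}^2\big)$ and using $1_{\tilde\Omega_{l-1}(M)}|\bE^l|_{\LL^2}^2\le 1_{\tilde\Omega_{l-1}(M)}\max_{1\le j\le l}|\bE^j|_{\LL^2}^2\le X_l$ together with $1_{\tilde\Omega_{l-1}(M)}\le1_{\tilde\Omega_{l-2}(M)}$, an inequality of the form
\[
X_m\le C(\nu)\Big[k+h^2+h^2\EE\big(k\textstyle\sum_{l=1}^m|\nabla\pi^l|_{L^2}^2\big)\Big]+\tilde C_1(M)\,k\sum_{l=1}^m X_l,
\]
valid once $kM\le C_0$ so that the $l=m$ term on the right, which carries a factor $k\tilde C_1(M)\le k\,C\,M\le C C_0$, can itself be absorbed into $X_m$ (this is where the hypothesis $kM\le C_0$ is used).

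The last step is the discrete Gronwall lemma: from $X_m\le a+b\,k\sum_{l=1}^{m-1}X_l$ (after the absorption above, with $a=C(\nu)[k+h^2+h^2\EE(k\sum|\nabla\pi^l|^2)]$ and $b\le 2\tilde C_1(M)$) one gets $X_m\le a\,e^{b\,mk}\le a\,e^{2\tilde C_1(M)T}$, which is \eqref{loc_mom_gene} up to renaming constants ($\delta$ absorbing the factor $2$ in the exponent, or one simply keeps track of it). Finally, to obtain \eqref{loc_mom_gene_nabla}, I would go back to the summed, non-maximal version of the identity: the good term $\tfrac{\nu}{2}\EE(k\sum_{l=1}^m 1_{\tilde\Omega_{l-1}(M)}|\nabla\bE^l|_{\LL^2}^2)$ that was left on the left-hand side is bounded by the right-hand side data terms plus $\tilde C_1(M)\,k\sum_{l=1}^m X_l\le\tilde C_1(M)\,T\,X_m$, and plugging in the bound on $X_m$ just obtained yields the claimed estimate with the extra $\tilde C_1(M)T\,e^{\tilde C_1(M)T}$ prefactor.

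The main obstacle is bookkeeping rather than a single hard idea: one must carefully track how the many $\epsilon$-type constants from Parts~1--4 add up against the single good coefficient $\nu$, and in particular verify that the quartic coefficient multiplying $\|\bu^{l-1}\|_V^4|\bE^l|_{\LL^2}^2$ — which after localization becomes the dominant term $M$ — can be pushed down to $(1+\delta)\tfrac{\bar C^4}{2^4\nu^3}$ while still leaving enough room ($\tfrac{\nu}{2}$, say) to absorb all the gradient contributions, including the one generated by the BDG estimate of the stochastic term. A secondary technical point is the self-absorption step requiring $kM\le C_0$: because $\tilde C_1(M)\sim M$ for large $M$, the term $k\tilde C_1(M)X_m$ on the right is $O(kM)X_m$, which is $<\tfrac12 X_m$ precisely when $kM$ is below an absolute threshold $C_0$ depending only on $\nu,\bar C$; this is what makes the discrete Gronwall argument legitimate.
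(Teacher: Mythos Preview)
Your proposal is correct and follows essentially the same route as the paper: localize \eqref{E^m-E^(m-1)} on $\tilde\Omega_{l-1}(M)$, invoke the estimates of Parts~1--4, split the stochastic term into a martingale increment (BDG) and a cross term absorbed by $\tfrac12|\bQ(\bE^l-\bE^{l-1})|_{\LL^2}^2$, bound the nonlinear feedback by a constant $\tilde C_1(M)$ on the localization set, and conclude by discrete Gronwall after the self-absorption step requiring $kM\le C_0$. The paper organizes this slightly differently, running a two-pass Gronwall: first the non-maximal version (where $\EE\tilde R_2=0$, so no BDG is needed) to bound $\max_n\EE(1_{\tilde\Omega_{n-1}}|\bE^n|_{\LL^2}^2)$, and only then the maximal version with BDG; your one-pass argument is equally valid.

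One arithmetic slip: the limiting value is $\epsilon_2\uparrow\tfrac12$, not $(\tfrac12)^{1/3}$. With $\epsilon_2=(1-2\lambda)/2$ one gets $\tfrac{\bar C^4}{2^8\epsilon_2^3\nu^3}=\tfrac{\bar C^4}{2^5(1-2\lambda)^3\nu^3}$, and it is the extra factor $2$ coming from clearing the $\tfrac12$ on the left of \eqref{Ediff} that produces $\tfrac{\bar C^4}{2^4(1-2\lambda)^3\nu^3}\to\tfrac{\bar C^4}{2^4\nu^3}$. Your phrasing ``$\epsilon,\epsilon_1,\epsilon_2$ small enough'' is therefore misleading: $\epsilon_2$ must be pushed \emph{up} toward $\tfrac12$ (so that $2\epsilon_2\uparrow1$), while $\epsilon_1$ and $\epsilon$ are sent to $0$ to keep $2\epsilon_1+2\epsilon_2+16\epsilon<1$.
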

\begin{proof}
Note that the definition of $\tilde{\Omega}_l(M)$ clearly implies that for fixed $M>0$, $l=0, \cdots, N-1$, $\tilde{\Omega}_{l+1}(M) \subset 
\tilde{\Omega}_l(M)$. Hence, as proved in \cite{CarPro} (see also \cite{Be-Mi_time}),  we have 
\begin{align}  \label{maj_E-E}
% \max_{1\leq n\leq m} 
\sum_{l=1}^n& 1_{\tilde{\Omega}_{l-1}(M) } \Big( |\bQ \bE^l |_{\LL^2}^2
- |\bQ \bE^{l-1}|_{\LL^2}^2 \Big)  =  
%\max_{1\leq n\leq m} 
 \Big( 1_{\tilde{\Omega}_{n-1}(M)} |\bQ \bE^n |_{\LL^2}^2  \nonumber\\
&\quad \qquad+ \sum_{l=2}^n  \big( 1_{\tilde{\Omega}_{l-2}(M)} - 1_{\tilde{\Omega}_{l-1}(M)}\big) |\bQ \bE^{l-1}|_{\LL^2}^2 \Big)
- 1_{\tilde{\Omega}_0(M)} |\bQ \bE^0|_{\LL^2}^2 \nonumber \\
  \qquad \geq &   %\max_{1\leq n \leq m}
    1_{\tilde{\Omega}_{n-1}(M)} |\bQ \bE^n|_{\LL^2}^2 - 1_{\tilde{\Omega}_0(M)} |\bQ \bE^0|_{\LL^2}^2, \qquad \forall n=1, \cdots, N.
\end{align} 
This inequality and  \eqref{E^m-E^(m-1)} imply for $n=1, \cdots, N$ 
\begin{align}		\label{Ediff}
\frac{1}{2}  1_{\tilde{\Omega}_{n-1}(M)} |\bQ\bE^n|_{\LL^2}^2  &+ \frac{1}{2} \sum_{l=1}^n 1_{\tilde{\Omega}_{l-1}(M) } 
\big|\bQ(\bE^l - \bE^{l-1})\big|_{\LL^2}^2 + \nu  k \sum_{l=1}^m 1_{\tilde{\Omega}_{l-1}(M)} |\nabla \bE^l|_{\LL^2}^2  \nonumber \\
& %+ \nu  k \sum_{l=1}^m 1_{\tilde{\Omega}_{l-1}(M)} |\nabla \bE^l|_{\LL^2}^2 
\leq 
% \EE\big( 1_{\tilde{\Omega}_0(M) } |\bQ \bE^0|_{\LL^2}^2 \big)  +
\sum_{i=1}^3  \tilde{R}_i(n,k,h,M) ,
\end{align}
where
\begin{align} \label{def_R}
\tilde{R}_1(n,k,h,M)& = \;   1_{\tilde{\Omega}_0(M) } |\bQ \bE^0|_{\LL^2}^2   +
 k \sum_{l=1}^n 1_{\tilde{\Omega}_{l-1}(M)} \big| \tilde{b}(\bu^l-\bu^{l-1},\, \bu^l, \, \bQ\bE^l)\big| \nonumber \\
&+ k \sum_{l=1}^n 1_{\tilde{\Omega}_{l-1}(M)} \big| \tilde{b}(\bu^{l-1} ,\, \bu^{l},\, \bQ\bE^l) - \tilde{b}(\bU^{l-1}, \, \bU^l , \, \bQ\bE^l)
\big| \nonumber  \\
&+ k \sum_{l=1}^n 1_{\tilde{\Omega}_{l-1}(M)} \big| \big(\nabla \bE^l, \, \nabla[\bu^l-\bQ\bu^l]\big) \big| 
+ k \sum_{l=1}^n 1_{\tilde{\Omega}_{l-1}(M)} |( \pi^l,\, {\rm div}\,\bQ \bE^l)|, \nonumber \\
\tilde{R}_2(n,k,h,M)&=\;   \sum_{l=1}^n1_{\tilde{\Omega}_{l-1}(M)} \Big( \big[ G(\bu^{l-1} ) - G(\bU^{l-1})\big] 
\Delta_lW, \, \bQ\bE^{l-1}\Big) , \nonumber \\
\tilde{R}_3(n,k,h,M) & =\;  \sum_{l=1}^n1_{\tilde{\Omega}_{l-1}(M)} \Big( \big[ G(\bu^{l-1} ) - G(\bU^{l-1})\big] 
\Delta_lW, \, \bQ(\bE^l - \bE^{l-1})\Big).
\end{align}
The upper estimates \eqref{U0}, \eqref{b(u-u)}, \eqref{tildeb-tildeb}, \eqref{E1} and \eqref{error-pressure-W}  imply for any 
$\epsilon_1, \epsilon_2, \epsilon >0$ and
small $h>0$
\begin{align}		\label{R1-first}
E\big(\tilde{R}_1&(n,k,h,M\big) )\leq \; C(\nu,\epsilon) \big( k + h^2\big) 
+   C(\nu, \epsilon )\,  h^2 \, \EE\Big( k \sum_{l=1}^n 1_{\tilde{\Omega}_{l-1}(M)} |\nabla \pi^l|_{L^2}^2\Big) \nonumber \\
&\; +   (2 \epsilon_1+2\epsilon_2+ 16 \epsilon) \, \nu\,  \EE\Big(k  \sum_{l=1}^n 1_{\tilde{\Omega}_{l-1}(M)}
|\nabla \bE^l|_{\LL^2}^2\Big) \nonumber \\
&  \; + \EE\Big( k \sum_{l=1}^n 1_{\tilde{\Omega}_{l-1}(M)} \Big[ 2+\Big( 1+\frac{\bar{C}^2}{2^3\epsilon_1 \nu}\Big)
\sqrt{M} + \frac{\bar{C}^4}{2^{8}\epsilon_2^3 \nu^3} M\Big] |\bE^l|_{\LL^2}^2\Big) .
\end{align}
 Since $1_{\tilde{\Omega}_{l-1}(M)}$, 
$\bu^{l-1}$, $\bU^{l-1}$ and $\bE^{l-1}$ are independent of $\Delta_l W$, we have
$\EE\big( \tilde{R}_2(n,k,h,M)\big)=0$. 

Using the Lipschitz property of $G$ \eqref{LipG_W}, the Cauchy-Schwarz and Young inequalities,  
%and the inequality $1_{\tilde{\Omega}_{l-1}(M)} \leq 1_{\tilde{\Omega}_{l-2}(M)}$ for $l=2, \cdots, N$, 
we obtain
\begin{align} 		\label{upper_M3}
\EE\big(&  \tilde{R}_3(n,k,h,M)  \big) \leq  \EE\Big[ \sum_{l=1}^n  1_{\tilde{\Omega}_{l-1}(M)}  
\big| \big( G(\bu^{l-1}) - G(\bU^{l-1}) \big) \Delta_lW \big|_{\LL^2}   |\bQ (\bE^l-\bE^{l-1})|_{\LL^2}\Big] \nonumber \\
\leq & \; \sum_{l=1}^n \Big\{ \EE\Big( 1_{\tilde{\Omega}_{l-1}(M)}|\bQ (\bE^l-\bE^{l-1})|^2_{\LL^2}\Big)\Big\}^{\frac{1}{2}} \nonumber \\
&\qquad \times 
\Big\{ \EE \Big( 1_{\tilde{\Omega}_{l-1}(M)} 
\| G(\bu^{l-1})-G(\bU^{l-1})\|_{{\mathcal L}(K,\WW^{1,2})}^2 k {\rm Tr}\, Q \Big) \Big\}^{\frac{1}{2}} \nonumber \\
\leq & \frac{1}{2} \EE\Big(\! \sum_{l=1}^n \! 1_{\tilde{\Omega}_{l-1}(M)}  |\bQ(\bE^l-\bE^{l-1})|_{\LL^2}^2\! \Big) + 
\frac{L_1 \, {\rm Tr}\, Q}{2} \EE\Big(\! k \sum_{l=1}^{n-1}\! 1_{\tilde{\Omega}_{l-1}(M)}  |\bE^l|_{\LL^2}^2 \!\Big) + C  h^2 k,
\end{align}
where in the last upper estimate we have used the inequalities $1_{\tilde{\Omega}_{l-1}(M)}
\leq 1_{\tilde{\Omega}_{l-2}(M)}$ for $l=2, \cdots, N$, and \eqref{U0} for $l=0$. 

 Fix $\lambda \in (0, \frac{1}{2})$,  let $\epsilon_1=\frac{\lambda}{2}$, $\epsilon_2=\frac{1}{2}(1-2\lambda)$ and $\epsilon = \frac{\lambda}{2^5}$.
Then $2\epsilon_1+2\epsilon_2+16\epsilon = 1-\frac{\lambda}{2}$.  The upper estimates \eqref{Ediff} -- \eqref{upper_M3}
%, \eqref{R1-first}, \eqref{upper_M2} and \eqref{upper_M1} 
yield
\begin{align} 		\label{pre-Gronwall-1}
 \EE\Big( & 1_{\tilde{\Omega}_{n-1}(M)} |\bQ\bE^n|_{\LL^2}^2 \!\Big) 
+  \lambda \nu \EE\Big( \!k \sum_{l=1}^n 
\! 1_{\tilde{\Omega}_{l-1}(M)} |\nabla \bE^l|_{\LL^2}^2\!\Big) \nonumber  \\
 \leq & \;  C(\nu) \big( k+ h^2\big) % \nonumber \\
%&\; 
+ 2 h^2  \EE\Big(k  \sum_{l=1}^n 1_{\tilde{\Omega}_{l-1}(M)} | \nabla \pi^l  |_{\LL^2}^2 \Big)
+  L_1 {\rm Tr}\, Q\;  \EE\Big( k \sum_{l=1}^{n-1} 1_{\tilde{\Omega}_{l-1}(M)} |\bE^l|_{\LL^2}^2\Big)  \nonumber \\
%&+ \,(2\epsilon_1+2\epsilon_2+15 \epsilon)\, \nu \EE\Big(k  \sum_{l=1}^m 1_{\tilde{\Omega}_{l-1}(M)}  |\nabla \bE^l|_{\LL^2}^2 \Big) 
%\nonumber \\
&\; +   2 \,  \EE\Big( k  \sum_{l=1}^n  1_{\tilde{\Omega}_{l-1}(M)} \Big[ 2+  C(\nu,\bar{C},  \lambda)\; 
\sqrt{M} +  \tilde{\alpha}(\nu,\bar{C}, \lambda) %\frac{\bar{C}^4}{ 2^5 \, (1-2\lambda)^3\, \nu^3}} 
M\Big] |\bE^l|_{\LL^2}^2\Big), %\nonumber \\
%&\; +  L_1 {\rm Tr}\, Q\;  \EE\Big( k \sum_{l=1}^{n-1} 1_{\tilde{\Omega}_{l-1}(M)} |\bE^l|_{\LL^2}^2\Big). 
\end{align}
 where $\tilde{\alpha}(\nu,\bar{C}, \lambda):= \frac{\bar{C}^4}{ 2^5 \, (1-2\lambda)^3\, \nu^3} $.
Using \eqref{I-Q_2}, we deduce that 
\[ |\bQ \bE^n - \bE^n|_{\LL^2}^2 = | \bQ \bu^n-\bu^n|_{\LL^2}^2 \leq C \, h^2 \, |\nabla \bu^n|_{\LL^2}^2.\]
Hence, \eqref{C(T,q)} with $q=1$  yields  for $\lambda >0$ %for $\epsilon_3>0$
 \begin{equation} 		\label{E_QE}
  \EE\Big(  1_{\tilde{\Omega}_{n-1}(M)} |\bE^n|_{\LL^2}^2 \Big) \leq 
(1+\lambda) \;  \EE\Big(  1_{\tilde{\Omega}_{n-1}(M)} |\bQ \bE^n|_{\LL^2}^2 \Big) + C(\lambda)  h^2.
\end{equation} 
%Fix $\delta_1\in (0,1)$; 
Plugging this upper estimate in \eqref{pre-Gronwall-1}, 
 we deduce that for  $N$ large enough (that is $k$ small enough) to ensure
\begin{equation}		\label{constraint_Mk}
2\, (1+\lambda)\, k \Big[ 2+  C(\nu,\bar{C},  \lambda) \sqrt{M} +\tilde{\alpha}(\nu,\bar{C}, \lambda) M\Big]\leq \lambda,
\end{equation}
  we have 
\begin{align} 		\label{Gronwall-1} 
 \EE\Big( &1_{\tilde{\Omega}_{n-1}(M)} |\bE^n|_{\LL^2}^2 \!\Big) 
+ 
\lambda \;  \EE\Big( \!k \sum_{l=1}^n 
\! 1_{\tilde{\Omega}_{l-1}(M)} |\nabla \bE^l|_{\LL^2}^2\!\Big) 
 \leq  C(\nu,\lambda)   \big( k+ h^2\big) 
 \nonumber  \\
 &  +  C(\lambda) \,  h^2\; 
 \EE\Big(k  \sum_{l=1}^n \big| \nabla  \pi^l\big|_{L^2}^2\Big) 
 +  \EE\Big( k   \sum_{l=1}^{n-1}   1_{\tilde{\Omega}_{l-1}(M)} \;  \tilde{C}_2( M, \lambda) |\bE^l|_{\LL^2}^2\Big),
\end{align}
where using the Young inequality, we set
\begin{align*}
 \tilde{C}_2(M,\lambda )& =  
 \frac{(1+\lambda) \bar{C}^4}{ 2^4\,  (1-\lambda) \, (1-2\lambda)^3\, \nu^3} M + C(\nu, \bar{C}, \lambda) \sqrt{M}   + C(L_1\, \mbox{\rm Tr}Q, \lambda)\\
 &\leq   \frac{(1+2\lambda) \bar{C}^4}{ 2^4\,  (1-\lambda) \, (1-2\lambda)^3\, \nu^3} M +C(L_1\, \mbox{\rm Tr}Q, \nu, \bar{C}, \lambda)  . 
 \end{align*}
Neglecting the gradient term and using the discrete version of  Gronwall's lemma, we deduce for $\lambda \in (0, \frac{1}{2})$ 
\begin{align*}	%	\label{local_gene_1}
 \max_{0\leq n\leq N}& \EE\Big( 1_{\tilde{\Omega}_{n-1}(M)} |\bE^n|_{\LL^2}^2 \Big) \leq 
 % \exp\Big( \frac{T \tilde{C}_2(M,\epsilon_1, \epsilon_2, \epsilon_3)}{1-\delta_1}\Big) \\ %\nonumber \\
 \exp\big( T \tilde{C}_2(M,\lambda )\big) \;  \Big[ 
%& \quad \times \Big[ C(\nu,\epsilon_1,\epsilon_2,\epsilon_3, \delta_1) 
C(\nu)  \big( k+ h^2\big) +  %\frac{2 (1+\epsilon_3)}{1-\delta_1} \; h^2\, 
C\, h^2\,  \EE\Big(k  \sum_{l=1}^n \big| \nabla  \pi^l\big|_{L^2}^2\Big)\Big]. 
\end{align*} 
Plugging this upper estimate in \eqref{Gronwall-1}, we obtain  
\begin{align}		\label{loc-speed-gene-grad1}
 \EE\Big( \!k \sum_{l=1}^n  1_{\tilde{\Omega}_{l-1}(M)} |\nabla \bE^l|_{\LL^2}^2\Big) \leq& \; C(\nu)\;  \Big[ 1+ 
 \tilde{C}_2(M,\lambda) T e^{\tilde{C}_2(M,\lambda) T}\Big] \nonumber \\
&\times  \Big[  k+h^2 + { h^2}  \EE\Big(k  \sum_{l=1}^n \big| \nabla  \pi^l\big|_{L^2}^2\Big)\Big].
\end{align} 
\smallskip

We next study $\EE\big( \max_{1\leq n\leq m} 1_{\tilde{\Omega}_{n-1}(M)} |\bE^n|_{\LL^2}^2\big)$. Using \eqref{Ediff}, 
we deduce 
\begin{align}		
\frac{1}{2} \EE\Big( &\max_{1\leq n\leq m} 1_{\tilde{\Omega}_{n-1}(M)} |\bQ\bE^n|_{\LL^2}^2 \Big) 
+ \EE\Big( \sum_{l=1}^m 1_{\tilde{\Omega}_{l-1}(M) }  \Big[ 
\frac{1}{2} \big|\bQ(\bE^l - \bE^{l-1})\big|_{\LL^2}^2  + \nu  k  |\nabla \bE^l|_{\LL^2}^2\Big] \Big)\nonumber \\
 & \leq 
2\!  \sum_{i\in \{1,3\}} \!  \EE\big( R_i(m,k,h,M)\big)   
+ 2\EE\Big( \max_{1\leq n\leq m} R_2(n,k,h,M)\big),\quad \forall m=1, \cdots, N.
\end{align}

We rewrite $\tilde{R}_2(n,k,h,M)$ as a stochastic integral. For every $s\in [t_{l-1}, t_l)$ with $l=1, \cdots, N$, set $\underline{s}=
t_{l-1}$, and write accordingly $\bu(\underline{s})=\bu^{l-1}$,  $\bU(\underline{s})=\bU^{l-1}$, $\bE(\underline{s})=\bE^{l-1}$ and
$\tilde{\Omega}_{\underline{s}}(M) = \tilde{\Omega}_{l-1}(M)$. Then
\[ \tilde{R}_2(n,k,h,M)= \int_0^{t_n} 1_{ \tilde{\Omega}_{\underline{s}}(M)} \big( [G(\bu(\underline{s})) - G(\bU(\underline{s}))] dW(s) ,\, \bQ\bE(\underline{s}) \big)
\]
The Davies and Young inequalities, together with  the upper estimates $1_{\tilde{\Omega}_{l-1}(M)}  \leq 1_{\tilde{\Omega}_{l-1}(M)} $ for
$l=2, \cdots, N$,  and  \eqref{U0}, imply that for any  $\lambda >0$,  
\begin{align}		\label{upper_M1}
\EE\Big( & \max_{1\leq n\leq m} \tilde{R}_2(n,k,h,M) \Big)\\
 \leq& \,  3 \EE\Big( \Big\{ \int_0^{t_m} \!\! 1_{\tilde{\Omega}_{\underline{s}}(M)}
\|G(\bu(\underline{s}))-G(\bU(\underline{s}))\|_{{\mathcal L}(K,\WW^{1,2})}^2 {\rm Tr}\, Q \, |\bQ \bE(\underline{s})|_{\LL^2}^2 ds \Big\}^{\frac{1}{2}}
\Big) \nonumber \\
\leq &  3  \EE\Big(  \max_{2\leq n < m} 1_{\tilde{\Omega}_{l-1}(M)} |\bQ \bE^{l-1}|_{\LL^2} \Big\{ \sum_{l=2}^m L_1 1_{\tilde{\Omega}_{l-1}(M)}
|\bE^{l-1}|_{\LL^2}^2 k\, {\rm Tr}\, Q \Big\}^{\frac{1}{2}}\Big) \nonumber \\
&\qquad + 3 \sqrt{ L_1 {\rm Tr} \, Q}  \sqrt{k}\,  \EE(|\bE^0|_{\LL^2}^2)  \nonumber \\
\leq &\,  
  \lambda \,  \EE \Big(\!   \max_{1\leq n\leq m-1} \! 1_{\tilde{\Omega}_{l-1}(M)} |\bQ \bE^l|_{\LL^2}^2\Big) \!+ 
 \! \frac{9 L_1{\rm Tr}\,Q}{4\lambda }
\EE\Big( k \! \sum_{l=1}^{m-1} \!1_{\tilde{\Omega}_{l-1}(M)} |\bE^l|_{\LL^2}^2 \Big) \!+ \!C h^2 \sqrt{k}. 
\end{align}
Using the above upper estimates of $\EE( \max_{1\leq n\leq m} R_i(n,k,h,M))$, $i=1,2,3$,  an argument similar to the previous ones 
   yields for $\lambda \in (0, \frac{1}{2})$  and $N$ 
large enough 
\begin{align*}
\EE\Big( \max_{1\leq n \leq m} &1_{\tilde{\Omega}_{n-1}(M)} |\bE^n|_{\LL^2}^2\Big) \leq 
\EE\Big(k \sum_{l=1}^{m-1} 1_{\tilde{\Omega}_{l-1}(M)}  \tilde{C}_1(M,\lambda )  |\bE^l|_{\LL^2}^2\Big)
 \\
& +  C(\nu, \lambda)  \Big[ k+h^2 + h^2 \EE\Big( k\sum_{l=1}^m 1_{\tilde{\Omega}_{l-1}(M)} 
|\nabla \pi^l|_{\LL^2}^2 \Big)\Big], 
 \end{align*}
where 
\[ \tilde{C}_1(M,\lambda)  =   \frac{(1+2\lambda) \bar{C}^4}{2^4\, (1-2\lambda)^4 \nu^3} \, M + C(L_1\, \mbox{\rm Tr}Q,\lambda, \nu, \lambda).
\]
Using once more the discrete Gronwall lemma, we obtain
\begin{align*}
\EE\Big( \max_{1\leq n\leq m} 1_{\tilde{\Omega}_{n-1}(M)} |\bE^n|_{\LL^2}^2\Big) \leq &\;  C(\nu, \lambda) 
\Big[ k+h^2 + h^2 \, \EE\Big( k\sum_{l=1}^m  
|\nabla \pi^l|_{\LL^2}^2 \Big)\Big]\; e^{T \tilde{C}_1(M,\lambda)}.  
\end{align*}

 Let $\delta\in (0,1)$; one may choose $\lambda \in (0,\frac{1}{2})$ small enough to have $\frac{1+2\lambda}{(1-2\lambda)^4} \leq  (1+\delta)$. 
Since the sets $\{ \tilde{\Omega}_{l}(M)\}_l$ decrease,  \eqref{loc_mom_gene} holds with $\tilde{C}_1(M)$ defined by \eqref{C1(M)gene}.

The upper estimate  \eqref{loc_mom_gene_nabla} is a straightforward consequence of  \eqref{loc-speed-gene-grad1} 
for $\lambda \in (0,\frac{1}{2})$ small enough.

Finally, observe that for ``large $M$", we may replace $\tilde{C}_1(M)$ by $ (1+\delta) \frac{\bar{C}^4}{2^4 \nu^3} M $, provided that the constraint
\eqref{constraint_Mk}, which enables us to use the discrete Gronwall lemma, is satisfied for some constant $\lambda$ defined in terms of $\delta$. 
This completes the proof. 
\end{proof} 

A similar argument proves the following localized convergence result for Algorithm 2.
\begin{prop}		\label{loc-cv-divfree}
 Let $(\bU^l)_l$ be solution of Algorithm 2,  that is ${\bf V}_h\subset V$,  and 
 $G$ satisfy the growth and Lipschitz conditions {\bf (G2)}. Let $u_0\in L^8(\Omega; V)$ and $\bU^0\in L^8(\Omega; \HH_h)$ be 
${\mathcal F}_0$-measurable random variables such that \eqref{U0} holds. For every $M>0$ and $l=0, \cdots, N$, set
\begin{equation}		\label{OmegaM}
{\Omega}_l(M):= \Big\{ \omega \in \Omega \, : \, \max_{0\leq j\leq l} \|\bu^j\|_V^2 \leq M\Big\}.
\end{equation}
 There exists  $C_0>0$ such that for   $k= \frac{T}{N}>0$ small enough to have $k M\leq C_0$, we have for every $\delta>0$ 
and for every $m=1, \cdots, N$ 
\begin{equation}		\label{loc_mom_divfree}
\EE\Big( 1_{\Omega_{m-1}(M)} \max_{1\leq n\leq m}  |\bE^n|_{\LL^2}^2 \Big)  \leq \; 
C(\nu,\delta)  \Big( k+ h^2\Big)\,  e^{\tilde{C}_3(M)T} , 
\end{equation} 
where 
\begin{align}		\label{C3(M)divfree}
  \tilde{C}_3(M):= &\,   (1+\delta)\; \Big[ \frac{ \bar{C}^2}{4 \nu} +1\Big] M  +  C(L_1\, \mbox{\rm Tr}\, Q, \delta) \nonumber\\
   \equiv & \,  (1+\delta)\,  \Big[ \frac{ \bar{C}^2}{4 \nu} +1\Big] M \quad \mbox{\rm for ``large"}\; M,  % \; \mbox{\rm  and ``small"}\;  kM, 
\end{align}
and $\bar{C}$ is the constant appearing in the Gagliardo-Nirenberg inequality \eqref{interpol}. Furthermore, 
\begin{equation}		\label{loc_mom_divfree_nab}
\EE\Big( k \sum_{l=1}^m 1_{\Omega_{l-1}(M)} |\nabla \bE^l|_{\LL^2}^2 \Big) \leq 
C(\nu, \delta)  \big( k+ h^2\big) \,  T \tilde{C}_3(M) \, e^{\tilde{C}_3(M) T} .
 \end{equation} 
 \end{prop}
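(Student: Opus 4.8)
The plan is to mirror the proof of Proposition \ref{loc-conv}, now taking advantage of the inclusion $\VV_h\subset V$. Throughout, $\bQ$ denotes the orthogonal projection onto $\VV_h$, so $\bQ\bE^l\in\VV_h\subset V$ is genuinely divergence free in $D$. Two simplifications occur at once. First, the pressure term disappears altogether: since ${\rm div}\,\bQ\bE^l=0$, one has $(\pi^l,{\rm div}\,\bQ\bE^l)=0$ for every $l$, so no $P^0_h$ projection is needed and the right-hand side carries no term of the form $h^2\,\EE(k\sum_l|\nabla\pi^l|_{L^2}^2)$---which is why \eqref{loc_mom_divfree} has no pressure contribution. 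Second, the iterates $\bu^{l-1},\bu^l,\bU^{l-1},\bU^l$ are all divergence free, hence every $\tilde b$ in the analogue of \eqref{E^m-E^(m-1)} reduces to $b$; in particular the $\frac12[{\rm div}\,\cdot]\,\cdot$ pieces, which in the general-element case forced the fourth power of the $V$ norm into the localization, are simply absent. Starting from the resulting identity for $\frac12\big[|\bQ\bE^l|_{\LL^2}^2-|\bQ\bE^{l-1}|_{\LL^2}^2+|\bQ(\bE^l-\bE^{l-1})|_{\LL^2}^2\big]+\nu k|\nabla\bE^l|_{\LL^2}^2$, I would sum over $l$, insert the decreasing cut-offs $1_{\Omega_{l-1}(M)}$ as in \eqref{maj_E-E}, and organize the terms as in \eqref{Ediff}--\eqref{def_R}, but with every $\tilde b$ replaced by $b$ and with the pressure contribution deleted.

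For the nonlinear part I would use the splitting $-b(\bu^{l-1},\bu^l,\bQ\bE^l)+b(\bU^{l-1},\bU^l,\bQ\bE^l)=\sum_{i=1}^4 T_i(l)$ (cf.\ \eqref{b-b}), with $\bQ\bE^l=\bE^l-(\bu^l-\bQ\bu^l)$ and $\bE^j=\bu^j-\bU^j$, and estimate $T_1(l),\dots,T_4(l)$ by the Gagliardo-Nirenberg inequality \eqref{interpol}, H\"older's and Young's inequalities together with \eqref{I-Q_1}, \eqref{I-Q_2} and \eqref{I-Q_L4}, exactly as in Part 3 of Section \ref{s4} but discarding all the ${\rm div}$-pieces (so that \eqref{divT3}, \eqref{divT4} and the corresponding $\|\bu^{l-1}\|_V^4$ terms never arise). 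The decisive contribution is $T_3(l)=-b(\bE^{l-1},\bu^{l-1},\bQ\bE^l)$, whose estimate---paralleling \eqref{T3-0}--\eqref{T3}---produces, after \eqref{interpol} and Young's inequality, an $\epsilon\nu\big(|\nabla\bE^l|_{\LL^2}^2+|\nabla\bE^{l-1}|_{\LL^2}^2\big)$ gradient term (to be absorbed on the left) and a coefficient of order $\frac{\bar C^2}{4\nu}\|\bu^{l-1}\|_V^2$ on $|\bE^l|_{\LL^2}^2$; this is the source of the $\frac{\bar C^2}{4\nu}M$ in $\tilde C_3(M)$, while the ``$+1$'' collects the remaining $|\bE^l|_{\LL^2}^2$-coefficients (coming e.g.\ from the $l=1$ term and from $T_1,T_2$). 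The remaining terms---$T_1(l),T_2(l),T_4(l)$, the term $b(\bu^l-\bu^{l-1},\bu^l,\bQ\bE^l)$, the error term $\nu k(\nabla\bE^l,\nabla[\bu^l-\bQ\bu^l])$ and the initial-data pieces---contribute only $O(k+h^2)$ after taking expectation and invoking the a priori bounds \eqref{C(T,q)}, \eqref{C(T,2)}, \eqref{C(T,4)}, \eqref{U2}, \eqref{sum_grad}, \eqref{U-U_L2} and \eqref{U0} (which continue to hold under {\bf (G2)}; recall $h\in(0,1]$), just as in Parts 1--3 of Section \ref{s4}.

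It remains to handle the stochastic term and close the estimate. As in \eqref{def_R}, split the noise contribution into $R_2$ (tested against $\bQ\bE^{l-1}$) and $R_3$ (tested against $\bQ(\bE^l-\bE^{l-1})$). Since $1_{\Omega_{l-1}(M)}$, $\bu^{l-1}$, $\bU^{l-1}$ are ${\mathcal F}_{t_{l-1}}$-measurable and $\Delta_l W$ is independent of ${\mathcal F}_{t_{l-1}}$, we have $\EE(R_2(n,k,h,M))=0$; for the $\max_n$ version of $R_2$, rewrite it as a stochastic integral and apply the Davies (Burkholder-Davis-Gundy) inequality, together with $1_{\Omega_{l-1}(M)}\le 1_{\Omega_{l-2}(M)}$, \eqref{LipG_H} and \eqref{U0}, to bound $\EE(\max_n R_2)$ by $\lambda\,\EE(\max_n 1_{\Omega_{n-1}(M)}|\bQ\bE^n|_{\LL^2}^2)+C_\lambda\,\EE(k\sum_l 1_{\Omega_{l-1}(M)}|\bE^l|_{\LL^2}^2)+C h^2\sqrt k$. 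For $R_3$, Cauchy-Schwarz, \eqref{LipG_H} and Young's inequality absorb half of $|\bQ(\bE^l-\bE^{l-1})|_{\LL^2}^2$ on the left and leave $\frac{L_1{\rm Tr}\,Q}{2}\,\EE(k\sum_l 1_{\Omega_{l-1}(M)}|\bE^l|_{\LL^2}^2)+C h^2 k$. Using $1_{\Omega_{l-1}(M)}\|\bu^{l-1}\|_V^2\le M\,1_{\Omega_{l-1}(M)}$, passing from $|\bQ\bE^n|_{\LL^2}$ to $|\bE^n|_{\LL^2}$ up to $C h^2$ via \eqref{I-Q_2} and \eqref{C(T,q)} with $q=1$, choosing $\epsilon,\epsilon_1,\epsilon_2,\lambda$ small enough that the gradient coefficient on the right stays below $\nu$ and $\frac{1+2\lambda}{(1-2\lambda)^4}\le 1+\delta$, and imposing $kM\le C_0$ so that the $k\big[\,{\rm const}+(1+\delta)\big(\frac{\bar C^2}{4\nu}+1\big)M\,\big]|\bE^l|_{\LL^2}^2$ sum can be absorbed, the discrete Gronwall lemma yields \eqref{loc_mom_divfree} with $\tilde C_3(M)$ as in \eqref{C3(M)divfree}. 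Keeping (rather than dropping) the $\nu k\sum_l 1_{\Omega_{l-1}(M)}|\nabla\bE^l|_{\LL^2}^2$ term on the left and inserting the bound just obtained for $\EE(1_{\Omega_{m-1}(M)}\max_n|\bE^n|_{\LL^2}^2)$ gives \eqref{loc_mom_divfree_nab}.

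The main obstacle is quantitative rather than structural: one must carry the constants through the chain of Gagliardo-Nirenberg and Young inequalities precisely enough that, after letting $\delta\downarrow 0$, the coefficient of $M$ in the exponent is exactly $\frac{\bar C^2}{4\nu}+1$---in particular, one should check that the $\max_n$ estimate obtained via the Davies inequality does not inflate this leading constant, and that every ``absorbable'' term is genuinely $O(k+h^2)$ uniformly in $N$ and $h$. A secondary point is that the smallness requirement $kM\le C_0$ needed for the discrete Gronwall step couples $N$ and $M$ and must remain compatible with the choice $M=M(N,h)$ made later in Section \ref{s5}.
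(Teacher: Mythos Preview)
Your proposal is correct and follows essentially the same route as the paper: drop the pressure via ${\rm div}\,\bQ\bE^l=0$, replace $\tilde b$ by $b$ throughout, use the $T_i(l)$ decomposition and the estimates \eqref{E1}, \eqref{b(u-u)}, \eqref{b-b}, \eqref{upper_M3}, \eqref{upper_M1} with $\Omega_{l-1}(M)$ in place of $\tilde\Omega_{l-1}(M)$, then close by discrete Gronwall. One small slip: to obtain the leading coefficient $\frac{\bar C^2}{4\nu}$ you must take $\epsilon_1$ close to $\tfrac12$ (the paper sets $\epsilon_1=\tfrac12(1-\lambda)$), not small; also $\epsilon_2$ plays no role here since the ${\rm div}$-piece of $\tilde T_3$ is absent, and the ``$+1$'' multiplying $M$ already arises inside the $T_3$ estimate \eqref{T3} rather than from $T_1,T_2$.
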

\begin{proof}
We briefly sketch the argument. The identities \eqref{E^m-E^(m-1)} with $b$ instead of $\tilde{b}$ and   ${\rm div}\,  \bQ\bE^l=0$, the estimates 
  \eqref{maj_E-E} and  \eqref{E_QE} imply for $n,m=1, \cdots, N$
\begin{align*}
&\EE\Big( 1_{\Omega_{n-1}(M)} | \bQ  \bE^n|_{\LL^2}^2 \Big) 
+  2  \nu \EE\Big( \sum_{l=1}^n k 1_{\Omega_{l-1}(M)} |\nabla \bE^l|_{\LL^2}^2\Big) \leq  2 
\EE\big[ {R}_1(n,k,h,M) + {R}_3(n,k,h,M) \big],\\
& \EE\Big( \max_{1\leq n\leq m}1_{\Omega_{n-1}(M)} | \bQ  \bE^n|_{\LL^2}^2\Big) \leq 2 \EE \Big[ {R}_1(n,k,h,M) + {R}_3(n,k,h,M) +
\max_{1\leq n\leq m} {R}_2(n,k,h,M)\Big], 
\end{align*}
where 
  ${R}_j(n,k,h,M)$ is deduced from $\tilde{R}_j(n,k,h,M)$ replacing $\tilde{\Omega}_{l-1}(M)$ by $\Omega_{l-1}(M)$ for $j=1,2,3$ (and where in 
  ${R}_2(n,k,h,M)$ the term containing the pressure is omitted). 

The upper estimates \eqref{U0},  \eqref{E1}, \eqref{b(u-u)},   \eqref{b-b}  and \eqref{upper_M3} with $\Omega_{l-1}(M)$ instead of 
$\tilde{\Omega}_{l-1}(M)$ imply for $\epsilon_1\in (0, \frac{1}{2})$ and $\epsilon>0$
\begin{align*}
 \EE\Big( &1_{\Omega_{n-1}(M)} |\bQ \bE^n|_{\LL^2}^2\Big) +  2 \nu  \EE\Big( k\sum_{l=1}^n 1_{\Omega_{l-1}(M)} |\nabla \bE^l|_{\LL^2}^2\Big)
\leq \; C(\nu, \epsilon, \epsilon_1)(k+h^2) \\
&+ C(\nu, \epsilon_1, M)  \EE\Big( k\, 1_{\Omega_{n-1}(M)} |\bE^n|_{\LL^2}^2\Big) 
+  2(10\epsilon + 2 \epsilon_1)   \nu  
\EE\Big( k\sum_{l=1}^n 1_{\Omega_{l-1}(M)} |\nabla \bE^l|_{\LL^2}^2\Big)\\
& +  
\EE\Big( k \sum_{l=1}^{n-1} 1_{\Omega_{l-1}(M)}| \Big[ C(L_1 \, \mbox{\rm Tr}\, Q) +  \Big( 1+\frac{\tilde{C}^2}{2^3 \epsilon_1 \nu} \Big)  M  \Big]
 \bE^l|_{\LL^2}^2\Big).
\end{align*}
Let $\lambda \in \big( 0, 1)$; set $\epsilon_1=\frac{1}{2}(1-\lambda)$ and $\epsilon = \frac{\lambda}{20}$; then $2\epsilon_1+10\epsilon = 1-\frac{\lambda}{2}$. 
Using \eqref{E-QE}, 
 we deduce for $\lambda \in (0,1)$  and $N$ large enough (that is $k$ small enough)
 \begin{align} 		\label{Gronwall-1-Bis} 
 \EE\Big( 1_{{\Omega}_{n-1}(M)} |\bE^n|_{\LL^2}^2 \!\Big) &
+ 
\lambda \;  \EE\Big( \!k \sum_{l=1}^n 
\! 1_{{\Omega}_{l-1}(M)} |\nabla \bE^l|_{\LL^2}^2\!\Big) 
 \leq  C(\nu,\lambda)   \big( k+ h^2\big)  \nonumber  \\
 &  +  \EE\Big( k \sum_{l=1}^{n-1}  1_{\tilde{\Omega}_{l-1}(M)} \;  \tilde{C}_4(M, \nu, \lambda) |\bE^l|_{\LL^2}^2\Big),
\end{align}
 where 
\[ \tilde{C}_4(M,\nu,\lambda)):=   (1+\lambda)  \Big( \frac{ \bar{C}^2}{2^2 (1-\lambda)^2  \, \nu}+1\Big)   M
 + C(L_1\, \mbox{\rm Tr}\, Q, \lambda).\]
Using the discrete version of Gronwall's lemma,  we obtain an upper estimate of $\EE\big( 1_{\Omega_{n-1}(M)} |\bE^l|_{\LL^2}^2\big)$.
 Plugging this 
result in the above upper estimate \eqref{Gronwall-1-Bis}, we deduce 
\begin{equation}		\label{upper_nabla_divfree}
\EE\Big( \!k \sum_{l=1}^N \!1_{\Omega_{l-1}(M)} |\nabla \bE^l|_{\LL^2}^2\Big) \leq C(\nu, \lambda) (k+h^2) 
 T \tilde{C}_4(M,\nu,\lambda)\;  \exp\Big( T \tilde{C}_4(M,\nu,\lambda)\Big) . 
\end{equation}
On easily sees that the upper estimate  \eqref{upper_M1} holds for  $R_2(n,k,h,M)$ and $\Omega_{l-1}(M)$ instead of 
$\tilde{R}_2(n,k,h,M)$ and $\tilde{\Omega}_{l-1}(M)$ respectively. Hence, we deduce that for $\lambda \in \big(0,\frac{1}{2}\big)$ and $k$ small enough, we have
 \begin{align*}
\EE\Big( \max_{1\leq n\leq m} 1_{\Omega_{n-1}(M)} |\bE^n|_{\LL^2}^2 \Big) \leq &\; 
C(\nu, \lambda) (k+h^2) %\\
%&
+ \EE\Big( \tilde{C}_3(M, \lambda)
\; k\sum_{l=1}^{m-1} 1_{\Omega_{l-1}(M)}   |\bE^l|_{\LL^2}^2 \Big),
\end{align*}
where 
\[ \tilde{C}_3(M,\lambda):=  (1+\lambda) \Big[ \frac{ \bar{C}^2}{2^2\, (1-2\lambda) (1-\lambda) \nu} +1\Big]M  + C( L_1 \mbox{\rm Tr}\, Q, \lambda). \]
The discrete Gronwall lemma implies for $m=1, \cdots, N$
\[ \EE\Big( \max_{1\leq n\leq m} 1_{\Omega_{n-1}(M)} |\bE^n|_{\LL^2}^2 \Big) \leq C(\nu, \lambda) (k+h^2) 
\;   \exp\big( T \tilde{C}_3(M, \lambda)\big).
\]
Fix $\delta>0$; one can choose $\lambda \in \big( 0, \frac{1}{2}\big)$ small enough to ensure $\frac{1+\lambda}{(1-\lambda)(1-2\lambda)} \leq 1+\delta$.  
 Since the sets $\{\Omega_l(M)\}_l$ are decreasing,, this
 concludes the proof of \eqref{loc_mom_divfree}. The proof of \eqref{loc_mom_divfree_nab}, which is similar to that   in the proof of
  \eqref{loc_mom_gene_nabla}, is omitted.  
 \end{proof}

\section{A general framework for strong convergence}		\label{framework}
This section is devoted to a very general setting to prove $L^2(\Omega)$-convergence for a real-valued family $\{ X(\eta)\}_\eta$ of  random variables 
with a specific rate of convergence. These results will be applied in the next section to obtain rates of convergence of $|\bE_n|_{\LL^2}$. 

We first prove that the speed of convergence of $\{X(\eta)\}_\eta$ can be deduced from localized $L^2(\Omega)$ estimates of $X(\eta)$ in terms if another
family $\{Y(\eta)\}_\eta$, and from a control of moments of $X(\eta)$ and $Y(\eta)$ uniformly in $\eta$.

\begin{theorem}		\label{general_strong}
Let $\{X(\eta)\}_\eta$ and $\{Y(\eta)\}_\eta$ be families of non-negative random variables indexed by some parameter $\eta\in (0,\eta_0]$, and let
$\varphi : (0,\eta_0] \to \RR$ be  a function such that $\varphi(\eta)\to 0$ as $\eta \to 0$. Suppose that for some exponents $a\geq 1$ and $p\in (1,\infty)$, we have
\begin{itemize}
\item Let  $M_0$, $C_0$ be positive constants, $\psi:(0,\eta_0] \to [0,\infty)$ be a function such that 
$\psi(\eta) \to 0$  as $\eta \to 0$. There exists a positive constant $C_1$ such that  for $M\geq M_0$ and $\eta \in (0,\eta_0]$ such that
  $\psi(\eta) M\leq C_0$, we have for any $\delta\in (0,1)$
\begin{equation}		\label{eq6.1}
\EE\Big( X(\eta) 1_{\{ Y(\eta)^a\leq M\} }\Big) \leq \varphi(\eta) \exp\big[ (1+\delta)C_1 M\big]; 
\end{equation}
\item
\begin{equation}		\label{eq6.2}
\sup_{\eta\in (0, \eta_0]} \EE\big( X(\eta)^p \big) = C(p)<\infty.
\end{equation}
\end{itemize}

(i) Suppose that 
\begin{equation}		\label{eq6.3}
\sup_{\eta\in (0,\eta_0]} \EE\big( Y(\eta)^q\big) = \tilde{C}(q)<\infty
\end{equation}
for some exponent $q>0$. Then for $\eta$ small enough
\begin{equation}		\label{eq6.4}
\EE\big( X(\eta) \big) \leq C \, \big| \ln\big( \varphi(\eta)\big) \big|^{-\frac{q(p-1)}{ap}}.
\end{equation}
If condition \eqref{eq6.2} holds for any $p\in (1,\infty)$, the upper bound of \eqref{eq6.4} can be replaced by $C   \big| \ln\big( \varphi(\eta)\big) \big|^{-\gamma_1}$ for
any $\gamma_1 < \frac{q}{a}$.  
\smallskip

(ii) Suppose that $a>1$ and that for some positive constant $\tilde{\alpha}_0$ 
\begin{equation}		\label{eq6.5}
\EE\big[ \exp\big(\alpha Y(\eta)\big) \big] =\bar{C}(\alpha)<\infty, \quad \mbox{\rm for}\; \alpha \in (0,\tilde{\alpha}_0)
\end{equation}
Then for $\eta$ small enough
\begin{equation}		\label{eq6.6}
\EE\big( X(\eta)\big) \leq C \, \exp\big( -\gamma_2 \big| \ln\big( \varphi(\eta)\big) \big|^{\frac{1}{ a}}\big)
\quad \mbox{\rm for}\; \gamma_2 < \frac{p-1}{p} \, \tilde{\alpha}_0\,  C_1^{-\frac{1}{a}}.
\end{equation}
Furthermore, if \eqref{eq6.2} holds for every $p\in (1,\infty)$, then the upper estimate \eqref{eq6.6} holds for $\gamma_2 < {\tilde{\alpha}_0} C_1^{-\frac{1}{a}}$.
\smallskip

(iii) Suppose that $a=1$ and that \eqref{eq6.5} holds. Then for $\eta$ small enough we have
\begin{equation}		\label{eq6.7}
\EE\big( X(\eta)\big) \leq \varphi(\eta)^\gamma \quad \mbox{\rm for} \; \gamma_3 <  \frac{\tilde{\alpha}_0 (p-1)}{\tilde{\alpha}_0 (p-1)+C_1 p}.
\end{equation}
Furthermore, if \eqref{eq6.2} holds for every $p\in (1,\infty)$, then the upper estimate \eqref{eq6.7} holds for 
$\gamma_3 < \frac{\tilde{\alpha}_0}{\tilde{\alpha}_0+C_1}$.
\end{theorem}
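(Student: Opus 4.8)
The plan is to prove the three assertions simultaneously by the same two-step device: split the expectation of $X(\eta)$ according to whether $Y(\eta)^a$ is below or above a threshold $M=M(\eta)$ that will be optimized at the end, bound the small part by the localized estimate \eqref{eq6.1} and the large part by Hölder's inequality together with \eqref{eq6.2}, and finally choose $M(\eta)$ depending on $\varphi(\eta)$ to balance the two contributions. Concretely, for any $M\ge M_0$ with $\psi(\eta)M\le C_0$,
\[
\EE\big(X(\eta)\big)=\EE\big(X(\eta)1_{\{Y(\eta)^a\le M\}}\big)+\EE\big(X(\eta)1_{\{Y(\eta)^a> M\}}\big)
\le \varphi(\eta)e^{(1+\delta)C_1M}+C(p)^{1/p}\,\PP\big(Y(\eta)^a>M\big)^{1-1/p},
\]
where the second term uses Hölder with exponents $p$ and $p/(p-1)$. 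The three cases differ only in how the tail $\PP(Y(\eta)^a>M)$ is estimated and therefore in the optimal choice of $M(\eta)$.

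For part (i) I would use Markov's inequality with the moment \eqref{eq6.3}: $\PP(Y(\eta)^a>M)\le \tilde C(q)M^{-q/a}$, so the second term is $O\big(M^{-q(p-1)/(ap)}\big)$. Writing $L:=|\ln\varphi(\eta)|$ (which tends to $+\infty$), one takes $M(\eta)$ a suitable constant multiple of $L$ — small enough that $(1+\delta)C_1M(\eta)\le L - \text{(slack)}$, so that $\varphi(\eta)e^{(1+\delta)C_1M(\eta)}$ is at most a negative power of $L$ matching $M(\eta)^{-q(p-1)/(ap)}$; this yields \eqref{eq6.4}. When \eqref{eq6.2} holds for every $p$, letting $p\to\infty$ pushes the exponent $q(p-1)/(ap)$ up to any $\gamma_1<q/a$. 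For parts (ii) and (iii), \eqref{eq6.5} gives the sub-exponential tail $\PP(Y(\eta)>m)\le \bar C(\alpha)e^{-\alpha m}$ for $\alpha<\tilde\alpha_0$, hence $\PP(Y(\eta)^a>M)\le \bar C(\alpha)e^{-\alpha M^{1/a}}$, so the second term in the split is bounded by $C\,e^{-\frac{p-1}{p}\alpha M^{1/a}}$. In case (ii), $a>1$: choosing $M(\eta)=\big(c\,L\big)$ with $c$ slightly below $1/\big((1+\delta)C_1\big)$ makes the first term $\le \varphi(\eta)^{\text{(tiny positive)}}$, which is dominated by the second term $C\exp\big(-\frac{p-1}{p}\alpha (cL)^{1/a}\big)$; tracking constants and optimizing over $\alpha\uparrow\tilde\alpha_0$, $\delta\downarrow0$, and $c\uparrow (C_1)^{-1}$ (so $c^{1/a}\uparrow C_1^{-1/a}$) gives \eqref{eq6.6}, and $p\to\infty$ removes the factor $\frac{p-1}{p}$. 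In case (iii), $a=1$: now $M^{1/a}=M$ is linear, so both terms are powers of $\varphi(\eta)$; the first is $\varphi(\eta)^{1-(1+\delta)C_1 c}$ and the second is $\varphi(\eta)^{\frac{p-1}{p}\alpha c}$ (after converting $e^{-\alpha c L}=\varphi(\eta)^{\alpha c}$), and one balances the two exponents by choosing $c$ so that $1-(1+\delta)C_1c=\frac{p-1}{p}\alpha c$, i.e. $c=\big((1+\delta)C_1+\frac{p-1}{p}\alpha\big)^{-1}$, giving exponent $\gamma_3=\frac{\frac{p-1}{p}\alpha}{(1+\delta)C_1+\frac{p-1}{p}\alpha}$; letting $\delta\downarrow0$, $\alpha\uparrow\tilde\alpha_0$ yields $\gamma_3<\frac{\tilde\alpha_0(p-1)}{\tilde\alpha_0(p-1)+C_1p}$, and $p\to\infty$ gives the improved bound $\frac{\tilde\alpha_0}{\tilde\alpha_0+C_1}$.

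The main obstacle is bookkeeping rather than conceptual: one must verify that the chosen $M(\eta)$ indeed satisfies the admissibility constraints $M(\eta)\ge M_0$ and $\psi(\eta)M(\eta)\le C_0$ for $\eta$ small — this is where the hypothesis $\psi(\eta)\to0$ is used, since $M(\eta)$ grows only like $|\ln\varphi(\eta)|$ while $\psi(\eta)$ must kill it — and that the constant $\delta$ can be absorbed into the claimed strict inequalities for $\gamma_1,\gamma_2,\gamma_3$. A secondary technical point is keeping the constants $C_1,\tilde\alpha_0,\bar C(\alpha),C(p)$ straight through the optimization so that the final exponents match exactly the stated thresholds; this is routine but must be done carefully, especially the interplay between $\delta\to0$ and the strictness of the inequalities, and the $p\to\infty$ limit in the variants.
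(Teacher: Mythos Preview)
Your proposal is correct and follows essentially the same route as the paper: split $\EE(X(\eta))$ on $\{Y(\eta)^a\le M\}$ and its complement, control the first piece by \eqref{eq6.1} and the second by H\"older plus the appropriate tail bound (polynomial Markov in (i), exponential Markov in (ii)--(iii)), then optimize $M(\eta)\sim c\,|\ln\varphi(\eta)|$. The paper carries out exactly this balancing---in (i) it even writes the explicit choice $M(\eta)=\frac{1}{(1+\delta)C_1}\big[|\ln\varphi(\eta)|-c\ln|\ln\varphi(\eta)|\big]$ with $c=\frac{q(p-1)}{ap}$ to make the two bounds match up to constants---and in (ii), (iii) the computations are line-for-line what you describe, including the final passage $\alpha\uparrow\tilde\alpha_0$, $\delta\downarrow 0$, $p\to\infty$.
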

\begin{proof} For $\eta >0$ let $M(\eta)>0$, $\Omega(\eta):= \{ Y(\eta)^a \leq M(\eta)\}$.
 
(i) Using \eqref{eq6.3} we deduce
\[ 		
 \PP\big( \Omega(\eta)^c\big)  = \PP\big( Y(\eta)^q > M(\eta)^{\frac{q}{a}} \big) \leq C(q)\, M(\eta)^{-\frac{q}{a}}.
 \] 
Furthermore, \eqref{eq6.2} and H\"older's inequality with conjugate exponents $p$ and $\frac{p}{p-1}$ imply
\begin{equation}	\label{eq6.8}	
 \EE\big( 1_{\Omega(\eta)^c}\, X(\eta) \big) \leq \big\{ \EE\big( X(\eta)^p\big)\big\}^{\frac{1}{p}} \big\{ \PP\big(\Omega(\eta)^c\big) \big\}^{\frac{p-1}{p}} 
\leq C(p,q) M(\eta)^{-\frac{q(p-1)}{ap}}.
\end{equation}
Let $M(\eta) \geq M_0$ and $\eta$ be small enough to have $\varphi(\eta)<1$ and $\eta M(\eta)  \leq C_0$; using \eqref{eq6.1} we deduce 
\begin{equation} \label{eq6.9}
 \EE\big( 1_{\Omega(\eta)} X(\eta) \big) \leq \varphi(\eta) \exp\big[ (1+\delta)\, C_1\, M(\eta)\big]\quad \mbox{\rm for some}\; \delta\in (0,1).
 \end{equation}
Choose $M(\eta)$ such that, up to some multiplicative constant, the right hand sides of \eqref{eq6.8} and \eqref{eq6.9} agree. Taking logarithms, this
comes down to 
\[(1+\delta) \,  C_1\, M(\eta) - \big|\ln \big( \varphi(\eta)\big)\big|  =
-c\, \ln\big( M(\eta)\big) + \bar{c}(\eta)\]
where $c= \frac{q(p-1)}{ap}$,  and $\bar{c}_1\leq \bar{c}(\eta) \leq \bar{c}_2$ holds for any $\eta\in (0,\eta_0]$, and some constants $\bar{c}_1, \bar{c}_2$.  
Set 
\[ M(\eta):= \frac{1}{(1+\delta)\, C_1 } \big[ \big|\ln \big( \varphi(\eta)\big)\big|  - c \ln\big( \big|\ln \big( \varphi(\eta)\big)\big| \big)\big]
\sim C \big|\ln \big( \varphi(\eta)\big)\big|  \]
for some positive constant $C$. Then $\varphi(\eta) M(\eta)\to 0$ as $\eta \to 0$; furthermore, for some constant $\bar{C}$ we have
\[ C_1(1+\delta) M(\eta) - \big|\ln \big( \varphi(\eta)\big)\big|  +c\ln\big(M(\eta) \big) = c\ln\Big( \frac{M(\eta)}{\big|\ln \big( \varphi(\eta)\big)\big| }\Big) 
\to \bar{C} \quad \mbox{\rm as} \; \eta \to 0.\] 
Hence, for $\eta$ small enough,  both upper estimates \eqref{eq6.8} and \eqref{eq6.9} agree - up to some multiplicative constant - with 
$C\; \big|\ln \big( \varphi(\eta)\big)\big|^{-\frac{q(p-1)}{ap}}$; this concludes the proof of \eqref{eq6.4}. If \eqref{eq6.2} holds for any $p\in (1,\infty)$, the bound on $\gamma_1$  is
a straightforward consequence of \eqref{eq6.4}. 
\smallskip

(ii) Using \eqref{eq6.5} and the Markov inequality, we deduce for $\alpha \in (0,\tilde{\alpha}_0)$ 
\[ \PP\big(\Omega(\eta)^c\big) = \PP\big[ \exp\big( \alpha Y(\eta)\big) > \exp\big( \alpha M(\eta)^{\frac{1}{a}}\big) \big] 
\leq C(\alpha) \exp\big( -\alpha M(\eta)^{\frac{1}{a}} \big).\]
Using again \eqref{eq6.2} and H\"older's inequality,  we deduce 
\begin{equation} 		\label{eq6.10}
\EE\big( 1_{\Omega(\eta)^c} \, X(\eta)\big) \leq C(p,\alpha) \exp\Big( - \alpha \frac{p-1}{p} M(\eta)^{\frac{1}{a}}\Big).
\end{equation}
We next choose $M(\eta)=\frac{1}{C} \big|\ln \big( \varphi(\eta)\big)\big| $ for some constant $C>(1+\delta) C_1$. 
Then $\varphi(\eta) M(\eta)\to 0$ as $\eta\to 0$; thus we may
use \eqref{eq6.1} for small $\eta$.  For this choice of $M(\eta)$, we have 
\[ \EE\big( 1_{\Omega(\eta)} \, X(\eta) \big) \leq \exp\Big[ - \Big( 1-\frac{(1+\delta) C_1}{C}\Big) \big|\ln \big( \varphi(\eta)\big)\big|  \Big].
\] 
Since $a>1$, we have $M(\eta)\to \infty$ as $\eta\to 0$; thus,  the right hand side of \eqref{eq6.10} converges to 0 slower than the above one.
 Splitting $\EE\big( X(\eta)\big)$ on $\Omega(\eta)$ and
$\Omega(\eta)^c$, we deduce that the largest term in this sum is the expected value on $\Omega(\eta)^c$. Hence, using \eqref{eq6.10} for this choice
of $M(\eta)$, we deduce that for $\eta$ small enough, 
\[ \EE\big( X(\eta)\big)\leq C \exp\Big( -\alpha \frac{p-1}{p} C^{-\frac{1}{a}} \big|\ln \big( \varphi(\eta)\big)\big|^{\frac{1}{a}}\Big) \quad 
\mbox{\rm for}\; \delta\in (0,1)\;  \mbox{\rm and}\; C>(1+\delta) C_1.\]
Fix $\gamma_2<\tilde{\alpha}_0\; \frac{p-1}{p} C_1^{-\frac{1}{a}}$; then choose $\alpha<\tilde{\alpha}_0$ close to $\tilde{\alpha}_0$, $\delta>0$ very small and
$C>C_1(1+\delta)$ very close to this bound to have
\[ \gamma_2 < \alpha \frac{p-1}{p} \big[ (1+\delta)\, C_1 \big]^{-\, \frac{1}{a}} < \tilde{\alpha}_0 \frac{p-1}{p} C^{-\, \frac{1}{a}}.\]
This inequality clearly yields \eqref{eq6.6}. 

Note that if \eqref{eq6.2} holds for all $p\in (0,1)$, given  $\gamma_2< \tilde{\alpha}_0 C_1^{-\frac{1}{a}}$ we can choose $p$ large enough to have
$\gamma_2< \alpha \frac{p-1}{p} C_1^{-\frac{1}{a}}$.
\smallskip

(iii) Using the upper estimates \eqref{eq6.1} with $a=1$ and \eqref{eq6.10}, we choose $M(\eta)$ such that the right hand sides of both inequalities agree
up to some multiplicative constant. Taking logarithms, we deduce that $M(\eta)$ should be such that
\[ (1+\delta)\, C_1 M(\eta) - \big|\ln \big( \varphi(\eta)\big)\big|  =
 -\alpha \frac{p-1}{p} M(\eta) \quad \mbox{\rm for some }\; \delta \in (0,1) \; \mbox{\rm and}\; \alpha \in (0,\tilde{\alpha}_0).\]
Thus, if we set $\beta:= \alpha \frac{p-1}{p}$ and  $M(\eta):= \frac{\big|\ln \big( \varphi(\eta)\big)\big| }{C_1(1+\delta)+\beta}$, 
we have $\varphi(\eta) M(\eta)\to 0$ as $\eta \to 0$, and
for $\eta$ small enough, $\varphi(\eta)<1$; thus for $\eta$ small enough, 
\[ \EE\big( X(\eta)\big) = C \exp\Big( - \frac{\beta \big|\ln \big( \varphi(\eta)\big)\big| }{\beta + (1+\delta) C_1}\Big) =C  \varphi(\eta)^{\frac{\beta}{C_1(1+\delta)+\beta}}
\quad \mbox{\rm for }\; \alpha \in (0,\alpha_0)\; \mbox{\rm and} \; \delta\in (0,1).
\] 
Set $\beta_0:= \tilde{\alpha}_0 \frac{p-1}{p}$ and let $\gamma_3< \frac{\beta_0}{\beta_0+C_1}$; choose $\alpha<\tilde{\alpha}_0$ close enough to
 $\tilde{\alpha}_0$ and $\delta>0$
small enough to have $\gamma_3<\frac{\beta}{\beta + (1+\delta)C_1}<\frac{\beta_0}{\beta_0+C_1}$. We deduce  \eqref{eq6.7}. 

If \eqref{eq6.2} is satisfied for any $p\in (0,1)$, given $\gamma_3<\frac{\tilde{\alpha}_0}{\tilde{\alpha}_0+C_1}$ we can choose $p$ large enough to have 
$\gamma_3<\frac{\beta_0}{\beta_0+C_1}<\frac{\tilde{\alpha}_0}{\tilde{\alpha}_0+C_1}$; 
therefore, we conclude that \eqref{eq6.7} holds with the exponent $\gamma_3$ in the right hand side.
\end{proof}

We next deduce similar results if the assumption \eqref{eq6.1} is slightly weakened as follows.
\begin{coro} 		\label{general_cor}
Let $\{Z(\eta)\}_\eta$ and $\{Y(\eta)\}_\eta$ be families of non-negative random variables indexed by some parameter $\eta\in (0,\eta_0]$, and let
$\varphi : (0,\eta_0] \to \RR$ be  a function such that $\varphi(\eta)\to 0$ as $\eta \to 0$. Suppose that for some exponents $a\geq 1$ and $p\in (1,\infty)$
\begin{itemize}
\item Let  $M_0$, $C_0$ be positive constants, $\psi:(0,\eta_0] \to [0,\infty)$ be a function such that 
$\psi(\eta) \to 0$  as $\eta \to 0$. There exists a positive constant $C_1$ such that  for $M\geq M_0$ and $\eta \in (0,\eta_0]$ such that
  $\psi(\eta) M\leq C_0$, we have for any $\delta\in (0,1)$
\begin{equation}		\label{eq6.1Bis}
\EE\Big( Z(\eta) 1_{\{ Y(\eta)^a\leq M\} }\Big) \leq \varphi(\eta) \,  C_1\,  M\, \exp\big[ (1+\delta)C_1 M\big],
\end{equation}
\item
\begin{equation}		\label{eq6.2Bis}
 \sup_{\eta\in (0, \eta_0]} \EE\big( Z(\eta)^p \big) = C(p)<\infty. 
\end{equation}
\end{itemize}

(i) Let $\{Y(\eta)\}_\eta$ satisfy the assumption \eqref{eq6.3}. Then for $\eta$ small enough,  \eqref{eq6.4} holds for $Z(\eta)$ instead of $X(\eta)$. 

(ii) Let $\{Y(\eta)\}_\eta$ satisfy the assumption \eqref{eq6.5} and $a>1$. Then for $\eta$ small enough, \eqref{eq6.5} holds with $Z(\eta)$ instead of $X(\eta)$.

(iii) $\{Y(\eta)\}_\eta$ satisfy the assumption \eqref{eq6.5} and $a=1$. Then for $\eta$ small enough, \eqref{eq6.6} holds with $Z(\eta)$ instead of $X(\eta)$.
\end{coro}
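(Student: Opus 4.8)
The plan is to reduce Corollary~\ref{general_cor} to Theorem~\ref{general_strong} rather than re-running all the optimization arguments, by absorbing the extra polynomial factor $C_1 M$ into the exponential at the cost of an arbitrarily small increase in the constant in the exponent. The key elementary observation is that for every $\delta' > \delta$ there is a constant $K = K(\delta,\delta',C_1)$ such that
\[
C_1 M \, \exp\big[(1+\delta)C_1 M\big] \leq K \, \exp\big[(1+\delta')C_1 M\big] \qquad \text{for all } M \geq 0,
\]
since $C_1 M \leq K \exp[(\delta'-\delta)C_1 M]$ with $K = \big[e(\delta'-\delta)\big]^{-1}$ (the function $x \mapsto x e^{-cx}$ is bounded by $(ec)^{-1}$). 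Consequently, assumption \eqref{eq6.1Bis} implies
\[
\EE\Big( Z(\eta) \, 1_{\{Y(\eta)^a \leq M\}}\Big) \leq \big(K\,\varphi(\eta)\big)\, \exp\big[(1+\delta')C_1 M\big],
\]
which is exactly assumption \eqref{eq6.1} of Theorem~\ref{general_strong} with $X(\eta)$ replaced by $Z(\eta)$, with $\varphi$ replaced by the function $\tilde\varphi(\eta) := K\varphi(\eta)$ (which still tends to $0$ as $\eta\to 0$), and with $\delta'\in(0,1)$ in place of $\delta$. Since $\delta$ was arbitrary in $(0,1)$ and the statements of Theorem~\ref{general_strong} already quantify over all $\delta\in(0,1)$, nothing is lost: $\delta'$ ranges over $(0,1)$ as $\delta$ does.

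Next I would check that the remaining hypotheses transfer verbatim. Assumption \eqref{eq6.2Bis} is literally \eqref{eq6.2} for $Z(\eta)$, and the hypotheses on $\{Y(\eta)\}_\eta$ — namely \eqref{eq6.3} in part (i), and \eqref{eq6.5} together with the relevant restriction on $a$ in parts (ii) and (iii) — are exactly those of the corresponding parts of Theorem~\ref{general_strong}, so they carry over unchanged. Applying Theorem~\ref{general_strong} to the pair $(Z(\eta),Y(\eta))$ with the function $\tilde\varphi$ then yields, in case (i), the bound
\[
\EE\big(Z(\eta)\big) \leq C\,\big|\ln\big(\tilde\varphi(\eta)\big)\big|^{-\frac{q(p-1)}{ap}};
\]
in cases (ii) and (iii) one gets the analogues of \eqref{eq6.6} and \eqref{eq6.7} with $\tilde\varphi$ in place of $\varphi$. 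The final step is to replace $\tilde\varphi$ by $\varphi$ in these conclusions. For case (i) this is immediate because $\ln(\tilde\varphi(\eta)) = \ln\varphi(\eta) + \ln K$, so $|\ln(\tilde\varphi(\eta))| \sim |\ln\varphi(\eta)|$ as $\eta\to 0$ and the power of the logarithm is unaffected up to a multiplicative constant. For cases (ii) and (iii) the same additive shift in $\ln\varphi$ is harmless: in (ii) one has $|\ln(\tilde\varphi(\eta))|^{1/a} = |\ln\varphi(\eta)|^{1/a}(1+o(1))$, and since the bound is stated with a strict inequality $\gamma_2 < \frac{p-1}{p}\tilde\alpha_0 C_1^{-1/a}$ one may shrink $\gamma_2$ slightly to absorb the $(1+o(1))$ factor; in (iii) one has $\tilde\varphi(\eta)^{\gamma} = K^{\gamma}\varphi(\eta)^{\gamma}$, and again the strict inequality on the exponent $\gamma_3$ lets one trade the constant $K^{\gamma_3}$ for a marginally smaller exponent (or simply keep the constant, since the conclusion \eqref{eq6.7} allows a constant $C$ in front). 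The strengthened conclusions when \eqref{eq6.2Bis} holds for every $p\in(1,\infty)$ follow the same way, since those too are stated with strict inequalities in the exponents.

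I do not expect a genuine obstacle here; the only point requiring a little care is the bookkeeping around the strict inequalities in parts (ii) and (iii), i.e.\ making sure that the additive constant $\ln K$ introduced into $\ln\varphi$, or the multiplicative constant $K^\gamma$ introduced into $\varphi^\gamma$, does not cost anything in the \emph{open} range of admissible exponents. This is handled by the elementary remark that if a bound of the form $C\exp(-\gamma_2|\ln(\tilde\varphi(\eta))|^{1/a})$ holds for all $\gamma_2$ below a threshold, then since $|\ln(\tilde\varphi(\eta))|^{1/a}/|\ln(\varphi(\eta))|^{1/a}\to 1$, the same bound with $\varphi$ in place of $\tilde\varphi$ holds for all $\gamma_2$ below the same threshold (and symmetrically in case (iii)). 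One should also note, for completeness, that the hypothesis $\psi(\eta)M\le C_0$ under which \eqref{eq6.1Bis} is assumed is the very same constraint under which \eqref{eq6.1} is invoked inside the proof of Theorem~\ref{general_strong} for the chosen $M=M(\eta)$, so the reduction is legitimate at the level of the admissible range of $M$ as well. With these remarks in place the corollary is an immediate corollary, as its name suggests.
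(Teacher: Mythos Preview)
Your proposal is correct and follows essentially the same approach as the paper: the paper's proof is the single observation that, for any $\delta>0$, the inequality $x\exp[(1+\delta)x]\le\exp[(1+2\delta)x]$ holds for large enough $x>0$, which absorbs the polynomial factor $C_1M$ into the exponential and reduces the corollary to Theorem~\ref{general_strong}. Your version differs only cosmetically in that you use the uniform bound $C_1M\le K\exp[(\delta'-\delta)C_1M]$ valid for all $M\ge0$, which forces you to track the multiplicative constant $K$ through $\tilde\varphi=K\varphi$ and then argue it is harmless in each conclusion; the paper's large-$x$ inequality avoids this bookkeeping since $M(\eta)\to\infty$ anyway, but both arguments are the same idea.
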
 
\begin{proof}
The proof is a straightforward consequence of the arguments used in the proof of   Theorem \ref{general_strong}, and, given any $\delta>0$, of the inequality 
$x \exp\big[ (1+\delta)x\big] \leq \exp\big[ (1+2\delta)x\big]$ valid for large enough $x>0$. 
\end{proof}

\section{ Convergence in $L^2(\Omega)$} 		
\label{s5}
In this section, we prove a speed of convergence in $L^2(\Omega)$-norm of the difference $\bE^l$ between the time and 
space-time discretizations. 
Coupled with the results of Theorems 
\ref{th_Euler_lingrow} and \ref{th_Euler_exp}, this will provide a strong (that is $L^2(\Omega)$) speed of convergence of the space-time Euler
scheme $\bU^l$ to $u(t_l)$ in $\LL^2(D)$ uniformly on the time grid $t_l=l \frac{T}{N}$.

\subsection{ Strong convergence of Algorithm 1} 
In this subsection, we focus   on the solution defined in Algorithm 1, that is  \eqref{algo_FE} and \eqref{divU}.
We will give results in the  case of a  multiplicative and of an  additive stochastic perturbation.
\subsubsection{Multiplicative noise}
We suppose that the diffusion coefficient $G$ satisfies the growth and Lipschitz conditions {\bf (G1)} or {\bf (G2)}.
 The following theorem is the main result of this section for general finite elements and diffusion coefficients.
As in \cite{CarPro}, for general coefficients we have either to keep the gradient of the pressure  (with some growth conditon), 
or to impose some relation between the space mesh $h$ and the time mesh $k=\frac{T}{N}$. 
\begin{theorem}		\label{th-str-gen-mul}
 Fix an integer $q_0\geq 3$;  suppose that $u_0\in L^{2^{q_0}}(\Omega;V)$ and $U_0\in L^{2^{q_0}}(\Omega;L^2_{per})$.

(i)  Let $G$ satisfy  condition {\bf (G1)} and suppose that $h^2 \bE\Big( k\sum_{l=1}^N |\nabla \pi^l|_{L^2}^2\Big)\to 0$ as $k,h\to 0$; 
then for $k+h^2+h^2 \bE\Big( k\sum_{l=1}^N |\nabla \pi^l|_{L^2}^2\Big) $  small enough, 
\begin{align}			\label{speed-gene-G1}
\EE\Big( \max_{ 0\leq l\leq N } |u(t_l) - \bU^l|_{\LL^2}^2 &
+ k  \sum_{ l=1}^N |\nabla u(t_l) - \nabla \bU^l|_{\LL^2}^2 \Big) \nonumber \\
 &\leq  C 
\Big|  \ln \Big[ k+h^2 + h^2 \bE\Big( k\sum_{l=1}^N |\nabla \pi^l|_{L^2}^2\Big) \Big]\, \Big|^{-(2^{q_0-2}-\frac{1}{2})}.
\end{align}

(ii)  Let $G$ satisfy  condition {\bf (G1)} and suppose that as $h,k\to 0$ we have $h^2 k^{-1}\to 0$; 
then for $k$ and $h^2\,  k^{-1}$ small enough, 
\begin{align}			\label{speed-gene-G1Bis}
\EE\Big( \max_{ 0\leq l\leq N } |u(t_l) - \bU^l|_{\LL^2}^2 
+ k  \sum_{ l=1}^N |\nabla u(t_l) - \nabla \bU^l|_{\LL^2}^2 \Big) &\leq  C 
\big|  \ln \big[ k+h^2 k^{-1} \big]\; \big|^{-(2^{q_0-2}-\frac{1}{2})}.
\end{align}

(iii) Suppose that  $G$ satisfies condition {\bf (G2)}. Then for $k$ and $h$ small enough  (without any restriction), we have
\begin{align}			\label{speed-gene-G2}
\EE\Big( \max_{ 0\leq l\leq N } |u(t_l) - \bU^l|_{\LL^2}^2 
+ k  \sum_{ l=1}^N |\nabla u(t_l) - \nabla \bU^l|_{\LL^2}^2 \Big) &\leq  C 
\big|  \ln \big(k+h^2 \big)  \big|^{-(2^{q_0-2}-\frac{1}{2})}.
\end{align}
 \end{theorem}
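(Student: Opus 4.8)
The plan is to derive the rate for the full error $u(t_l)-\bU^l$ from the triangle inequality $u(t_l)-\bU^l=\big(u(t_l)-\bu^l\big)+\bE^l$, where $\bE^l=\bu^l-\bU^l$: the first summand is controlled by Theorem~\ref{th_Euler_lingrow}, while the rate for $\bE^l$ is obtained by feeding the localized estimates of Proposition~\ref{loc-conv} into the abstract framework of Section~\ref{framework}. Using $|a+b|^2\le 2|a|^2+2|b|^2$, it suffices to bound $\EE\big(\max_{0\le l\le N}|\bE^l|_{\LL^2}^2+k\sum_{l=1}^N|\nabla \bE^l|_{\LL^2}^2\big)$, because Theorem~\ref{th_Euler_lingrow} (with $q=q_0$) gives $\EE\big(\max_l|u(t_l)-\bu^l|_{\LL^2}^2+k\sum_l|\nabla(u(t_l)-\bu^l)|_{\LL^2}^2\big)\le C[\ln N]^{-(2^{q_0-1}-1)}$, and this term is negligible with respect to the bound produced below: for $q_0\ge 3$ one has $2^{q_0-1}-1>2^{q_0-2}-\tfrac{1}{2}$, and since the function $\varphi(\eta)$ chosen below satisfies $\varphi(\eta)\ge k=T/N$, one has $|\ln\varphi(\eta)|\le|\ln k|\le C\ln N$ for $N$ large.

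For part~(i) I would apply Theorem~\ref{general_strong} with the pair $\eta=(k,h)$, with $X(\eta)=\max_{1\le n\le N}|\bE^n|_{\LL^2}^2$, with $Y(\eta)=\max_{0\le l\le N}\|\bu^l\|_V$ and exponent $a=4$, so that $\{Y(\eta)^4\le M\}=\tilde\Omega_N(M)\subset\tilde\Omega_{n-1}(M)$ in the notation of \eqref{tildeOmega}, with $\psi(\eta)=k$, and with $\varphi(\eta)=k+h^2+h^2\EE\big(k\sum_{l=1}^N|\nabla\pi^l|_{L^2}^2\big)$. Hypothesis \eqref{eq6.1} is exactly \eqref{loc_mom_gene}--\eqref{C1(M)gene} (for ``large'' $M$), with $C_1=\bar C^4T/(2^4\nu^3)$; the prefactor $C(\nu)$ only shifts $|\ln\varphi(\eta)|$ by a bounded amount and does not affect the rate. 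Hypothesis \eqref{eq6.2} holds with the dyadic exponent $p=2^{q_0-1}$: since $|\bE^n|_{\LL^2}^2\le 2|\bu^n|_{\LL^2}^2+2|\bU^n|_{\LL^2}^2$, the bounds \eqref{C(T,q)} of Lemma~\ref{moments_uN} and \eqref{U2} of Lemma~\ref{moments_U} (both with $q=q_0$) give $\sup_\eta\EE\big(X(\eta)^{2^{q_0-1}}\big)<\infty$. Hypothesis \eqref{eq6.3} holds with $q=2^{q_0}$, again by \eqref{C(T,q)}. Theorem~\ref{general_strong}(i) then yields $\EE\big(\max_n|\bE^n|_{\LL^2}^2\big)\le C|\ln\varphi(\eta)|^{-q(p-1)/(ap)}$, and one computes $q(p-1)/(ap)=2^{q_0}(2^{q_0-1}-1)/(4\cdot 2^{q_0-1})=2^{q_0-2}-\tfrac{1}{2}$, the exponent in \eqref{speed-gene-G1}. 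For the gradient sum I would argue in the same way using Corollary~\ref{general_cor} with $Z(\eta)=k\sum_{l=1}^N|\nabla\bE^l|_{\LL^2}^2$: estimate \eqref{loc_mom_gene_nabla} has exactly the shape \eqref{eq6.1Bis} (using $1_{\{Y^4\le M\}}\le 1_{\tilde\Omega_{l-1}(M)}$ for every $l$), and \eqref{eq6.2Bis} follows from $k\sum_l|\nabla\bE^l|_{\LL^2}^2\le 2T\max_l\|\bu^l\|_V^2+2k\sum_l|\nabla\bU^l|_{\LL^2}^2$ together with \eqref{C(T,q)} and \eqref{sum_grad}. Combining the two estimates with the (faster) term from Theorem~\ref{th_Euler_lingrow} gives \eqref{speed-gene-G1}.

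For (ii) and (iii) the scheme of proof is unchanged; only the estimate of $\varphi(\eta)$ is modified, and in both cases the pressure contribution gets absorbed into $k$ or $h^2$. In (ii), under {\bf (G1)}, I would insert \eqref{pressure-W}, i.e. $\EE\big(k\sum_{l=1}^N|\nabla\pi^l|_{L^2}^2\big)\le C(T)N=C(T)Tk^{-1}$, so that $\varphi(\eta)\le C\big(k+h^2k^{-1}\big)$ (using $h^2\le h^2k^{-1}$ for $k\le 1$), whence \eqref{speed-gene-G1Bis}. In (iii), under {\bf (G2)}, the localized estimates of Proposition~\ref{loc-conv} and of Corollary~\ref{general_cor} still hold (the Leray-projected diffusion coefficient retains the growth and trace bounds used in their proofs), and \eqref{pressure-V} now gives $\EE\big(k\sum_{l=1}^N|\nabla\pi^l|_{L^2}^2\big)\le C(T)$, so that $\varphi(\eta)=C(k+h^2)$ with no constraint linking $h$ and $k$; this yields \eqref{speed-gene-G2}. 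The step that demands the most care is the bookkeeping with the dyadic moment exponents: one must verify \eqref{eq6.2} and \eqref{eq6.2Bis} with the sharp value $p=2^{q_0-1}$ permitted by the integrability assumed on $u_0$ and $U_0$, and check that $q(p-1)/(ap)$ collapses to precisely $2^{q_0-2}-\tfrac{1}{2}$. No genuinely new analytic estimate is needed beyond those already established in Sections~\ref{s4}--\ref{framework}.
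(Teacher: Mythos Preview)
Your proof is correct and follows essentially the same route as the paper: apply Theorem~\ref{general_strong} and Corollary~\ref{general_cor} to $X=\max_l|\bE^l|_{\LL^2}^2$ and $Z=k\sum_l|\nabla\bE^l|_{\LL^2}^2$ using the localized bounds from Proposition~\ref{loc-conv}, then combine with Theorem~\ref{th_Euler_lingrow} via the triangle inequality, and for (ii)--(iii) absorb the pressure term through \eqref{pressure-W}--\eqref{pressure-V}. The only cosmetic difference is your parametrization $Y=\max_l\|\bu^l\|_V$ with $a=4$ and $q=2^{q_0}$, whereas the paper takes $Y=\max_l\|\bu^l\|_V^2$ with $a=2$ and $q=2^{q_0-1}$; these choices describe the same localization set and yield the same exponent $q(p-1)/(ap)=2^{q_0-2}-\tfrac12$.
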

\begin{remark} If $u_0$ and $\bU^0$ are deterministic  or have moments of all orders (such as Gaussian random variables), in all cases the exponent
of the logarithm is arbitrary large.
\end{remark}
\begin{proof}
(i) Let $\eta=k+h^2$, $\varphi(\eta)=k+h^2+ h^2\EE\big( k\sum_{l=1}^N |\nabla \pi^l|_{L^2}^2\big)$ and $\psi(\eta)=k\leq \eta$; 
 by assumption, $\varphi(\eta)\to 0$ as $\eta\to 0$.
Set
\begin{equation}		\label{XYZ}
 X(\eta)= \max_{0\leq l\leq N} |\bu^l-\bU^l|_{\LL^2}^2, \;  Y(\eta)=\max_{0\leq l\leq N-1} \| \bu^l\|_V^2 \;\mbox{\rm and} \;  Z(\eta)=k\sum_{l=1}^N
| \nabla \bu^l-\nabla \bU^l |_{\LL^2}^2. 
\end{equation}
Using \eqref{C1(M)gene} in Proposition \ref{loc-conv}, we deduce that \eqref{eq6.1} and \eqref{eq6.1Bis} hold
 for $a=2$ and $C_1=\frac{\bar{C}^4\, T}{2^4\nu^3}$. Furthermore,
the upper estimates  \eqref{C(T,q)}, 
\eqref{U2} and \eqref{sum_grad}  imply that \eqref{eq6.2} and \eqref{eq6.2Bis} are true for 
$p=2^{q_0-1}$, while \eqref{C(T,q)} implies that \eqref{eq6.3} holds for
$q=2^{q_0-1}$.  Since $\frac{q(p-1)}{aq}=2^{q_0-2}-\frac{1}{2}$, using part (i) in Theorem \ref{general_strong} and Corollary \ref{general_cor}, we deduce 
\[ 
  \EE\Big( \max_{ 0\leq l\leq N } |u^l - \bU^l|_{\LL^2}^2 \Big)  + k  \sum_{ l=1}^N |\nabla u^l - \nabla \bU^l|_{\LL^2}^2 \Big)
\leq  C 
\Big|  \ln \Big[ k+h^2 + h^2 \bE\Big( k\sum_{l=1}^N |\nabla \pi^l|_{L^2}^2\Big) \Big]\, \Big|^{-(2^{q-2}-\frac{1}{2})}.
\] 
Coupling this upper estimate with  \eqref{speed_lin_growth_Euler}, and using the inequality  inequality $k^{-(2^{-q-1}-1)} \leq C  k^{-(2^{q-2}-1/2)}$
for ``small" $k$,  we obtain  \eqref{speed-gene-G1}.

 \smallskip
 
 (ii)  Using the upper estimate 
   \eqref{pressure-W}   
we deduce 
\[ k+h^2+ h^2 \bE\Big( k\sum_{l=1}^N |\nabla \pi^l|_{L^2}^2\Big) \leq k+h^2+C(T) \frac{h^2}{k} \leq C(T)\Big( k+  \frac{h^2}{k}\Big),\]
where $k+h^2/k\equiv 0$. Hence, \eqref{speed-gene-G1} implies \eqref{speed-gene-G1Bis}.

 (iii) Recall that if $G$ satisfies condition {\bf (G2)}, \eqref{pressure-V} implies 
  \[ k+h^2+ h^2 \bE\Big( k\sum_{l=1}^N |\nabla \pi^l|_{L^2}^2\Big) \leq k+h^2\big(1+ C(T)\big)  \leq C(T) (k+h^2).\]
 Thus \eqref{speed-gene-G1} implies \eqref{speed-gene-G2}. 
  \end{proof}
 
\subsubsection{ Additive noise }
As in \cite{Be-Mi_time}, we replace the H\"older  inequality for a power  of $\max_l \|\bu^l\|^2_V$ by an exponential
Markov inequality for $\max_l \|\bu^l\|_V^2$;  recall that the maximal exponent of the exponential moments proved in 
Theorem \ref{exp-mom} is $\tilde{\alpha}_0$.
This yields the following 
 \begin{theorem}		\label{th-str-gene-add}
Let $G$ satisfy condition {\bf (G1)} with $K_1=0$, i.e.,  $\| G(u)\|_{{\mathcal L}(K,\WW^{1,2})}^2 \leq K_0$. Let  $u_0\in V$
and $U_0\in L^{2^{q_0}}(\Omega;L^2_{per})$ for some $q_0\in [3,\infty)$. 
Set   
 $\tilde{\alpha}_0:= \frac{\nu}{2\, \tilde{C} \, K_0 \,  {\rm Tr} Q}$,  where $\tilde{C}$ is defined by 
$|\nabla u|_{\LL^2}^2 \leq \tilde{C} |Au|_{\LL^2}^2$.  Let %$C_1=\frac{\bar{C}^4\, T}{2^4\nu^3}$ and 
$\kappa_0:= \frac{2^{q_0-1}-1}{2^{q_0-1}} \, \tilde{\alpha_0}\,  \frac{4}{\bar{C}^2} \, \sqrt{\frac{\nu^3}{T}}$.

(i) 
 Suppose that
$k+h^2+ h^2 \EE\big(k \sum_{l=1}^N |\nabla \pi^l|_{L^2}^2\big) $ is ``small". Then  for $\gamma <  \kappa_0$ 
\begin{align}			\label{speed-gen-add}
\EE\Big(\! \max_{  0\leq l\leq N } |u(t_l) - \bU^l|_{\LL^2}^2 & + \! k  \sum_{ l=1}^N |\nabla u(t_l) - \nabla \bU^l|_{\LL^2}^2 \!\Big)\nonumber \\
 &\leq  C \exp\Big( \!
- \gamma \, 
  \Big| \ln \Big[ k+h^2+h^2\EE\Big( k\sum_{j=1}^N |\nabla \pi^j|_{L^2}^2 \Big) \Big] \Big|^{\frac{1}{2}}\Big). 
\end{align}

(ii) Suppose that  $G$ satisfies  condition {\bf (G1)} and that $h^2\,  k^{-1}\to 0$ as $h,k\to 0$; then for ``small"  $k+ h^2 k^{-1}$ we have for $\gamma < \kappa_0$
 \begin{align}			\label{speed-gen-add_Bis}
\EE\Big(\! \max_{  0\leq l\leq N } |u(t_l) - \bU^l|_{\LL^2}^2 &+ \! k  \sum_{  l=1}^N |\nabla u(t_l) - \nabla \bU^l|_{\LL^2}^2 \!\Big)
 \leq  C \exp\Big( \!\! 
- \gamma \,   \big| \ln \big[ k+{h^2}{k}^{-1} \big] \big|^{\frac{1}{2}}\Big). 
\end{align}

(iii) Suppose that  $G$ satisfies condition {\bf (G2)}.Then, when $k\to 0$ and $h\to 0$ (without any restriction),
 we have for $\gamma < \kappa_0$
 \begin{align}			\label{speed-gen-add_G2}
\EE\Big(\! \max_{ 0\leq l\leq N } |u(t_l) - \bU^l|_{\LL^2}^2 &+ \! k  \sum_{ l=1}^N |\nabla u(t_l) - \nabla \bU^l|_{\LL^2}^2 \!\Big)
 \leq  C \exp\Big( \!
- \gamma \, %\frac{\alpha \sqrt{2^3\nu^3}}{p\bar{C}^2 \sqrt{T}}\,
  \big| \ln \big[ k+h^2  \big] \big|^{\frac{1}{2}}\Big). 
\end{align}
\end{theorem}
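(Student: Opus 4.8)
The plan is to repeat the scheme of proof of Theorem \ref{th-str-gen-mul} essentially verbatim, the only change being that the polynomial (H\"older) control of a fixed power of $\max_{0\le l\le N}\|\bu^l\|_V^2$ is upgraded to an exponential Markov bound; consequently the abstract machinery invoked is part (ii) of Theorem \ref{general_strong} and of Corollary \ref{general_cor} instead of their parts (i).

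For part (i) I would put $\eta = k+h^2$, set $\varphi(\eta) = k + h^2 + h^2\,\EE\big(k\sum_{l=1}^N|\nabla\pi^l|_{L^2}^2\big)$ and $\psi(\eta) = k \le \eta$, and take the three families $X(\eta) = \max_{0\le l\le N}|\bu^l-\bU^l|_{\LL^2}^2$, $Y(\eta) = \max_{0\le l\le N-1}\|\bu^l\|_V^2$, $Z(\eta) = k\sum_{l=1}^N|\nabla(\bu^l-\bU^l)|_{\LL^2}^2$ exactly as in \eqref{XYZ}; by hypothesis $\varphi(\eta)\to 0$ and $\varphi(\eta)\ge k$. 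Then I would verify the three inputs of the general framework. First, Proposition \ref{loc-conv}, i.e. the bounds \eqref{loc_mom_gene}--\eqref{loc_mom_gene_nabla} with $\tilde C_1(M)$ as in \eqref{C1(M)gene}, gives the localized estimates \eqref{eq6.1} and \eqref{eq6.1Bis} with exponent $a = 2$ — because $\tilde\Omega_l(M)$ is the set where $\|\bu^j\|_V^4 = (Y(\eta))^2 \le M$ — and with $C_1 = \bar{C}^4 T/(2^4\nu^3)$, hence $C_1^{-1/a} = \tfrac{4}{\bar{C}^2}\sqrt{\nu^3/T}$. Second, \eqref{C(T,q)}, \eqref{U2} and \eqref{sum_grad} taken with $q = q_0$ furnish the uniform moment bounds \eqref{eq6.2} and \eqref{eq6.2Bis} for $X(\eta)$ and $Z(\eta)$ with $p = 2^{q_0-1}$. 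Third — the only genuinely new ingredient, and the one using $K_1 = 0$ — the exponential moment estimate for $\max_{0\le l\le N}\|\bu^l\|_V^2$ established in the Appendix (Theorem \ref{exp-mom}) provides \eqref{eq6.5} with $\tilde\alpha_0 = \nu/(2\tilde C\,K_0\,{\rm Tr}\, Q)$. Applying Theorem \ref{general_strong}(ii) to $X(\eta)$ and Corollary \ref{general_cor}(ii) to $Z(\eta)$ (here $a = 2 > 1$) then yields, for $\eta$ small, $\EE(X(\eta)) + \EE(Z(\eta)) \le C\exp\big(-\gamma\,|\ln\varphi(\eta)|^{1/2}\big)$ for every $\gamma < \tfrac{p-1}{p}\,\tilde\alpha_0\,C_1^{-1/2}$, and one checks that this threshold is precisely $\kappa_0$. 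To reach the stated conclusion I would finally split $|u(t_l)-\bU^l|^2 \le 2|u(t_l)-\bu^l|^2 + 2|\bu^l-\bU^l|^2$ (and likewise for the gradient sums) and bound the first piece by Theorem \ref{th_Euler_exp}, which gives a polynomial rate $Ck^\beta$ with $\beta > 0$; since $\varphi(\eta)\ge k$, for small $k$ one has $\beta|\ln k| \ge \gamma|\ln\varphi(\eta)|^{1/2}$, so that term is absorbed into $\exp(-\gamma|\ln\varphi(\eta)|^{1/2})$, giving \eqref{speed-gen-add}.

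Parts (ii) and (iii) are then quick corollaries obtained by simplifying $\varphi(\eta)$ via the pressure estimates: under {\bf (G1)}, \eqref{pressure-W} gives $h^2\EE(k\sum_l|\nabla\pi^l|_{L^2}^2)\le C(T)h^2k^{-1}$, so $\varphi(\eta)\le C(T)(k+h^2k^{-1})$ and \eqref{speed-gen-add} becomes \eqref{speed-gen-add_Bis}; under {\bf (G2)}, \eqref{pressure-V} gives $h^2\EE(k\sum_l|\nabla\pi^l|_{L^2}^2)\le C(T)h^2$, so $\varphi(\eta)\le C(T)(k+h^2)$ and \eqref{speed-gen-add} becomes \eqref{speed-gen-add_G2}.

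The step I expect to be delicate is not an estimate but the exact tracking of constants through the chain Proposition \ref{loc-conv} $\to$ Theorem \ref{general_strong}(ii) $\to$ coupling, so as to land on the sharp threshold $\kappa_0 = \tfrac{2^{q_0-1}-1}{2^{q_0-1}}\,\tilde\alpha_0\,\tfrac{4}{\bar{C}^2}\sqrt{\nu^3/T}$: one must read off $C_1$ as the coefficient of $M$ in the \emph{exponent} $\tilde C_1(M)T$ appearing in \eqref{loc_mom_gene}, remember that the localization exponent is $a = 2$ because $\tilde\Omega_l(M)$ involves $\|\bu^l\|_V^4$, and use precisely the value of $\tilde\alpha_0$ from the Appendix (with ${\rm Tr}\, Q$ included, as flagged in the remark following Theorem \ref{th_Euler_exp}). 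A lesser point is to make sure the $L^p$ bounds really hold for $p = 2^{q_0-1}$ with $q_0\ge 3$, which is what makes $\tfrac{p-1}{p}$ usefully large, and that in the Gaussian or deterministic case, where all moments are finite, $\gamma$ may be pushed up to $\tilde\alpha_0 C_1^{-1/2}$.
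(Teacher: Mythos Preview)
Your proposal is correct and follows essentially the same route as the paper's own proof: the same choice of $\eta$, $\varphi$, $\psi$, $X$, $Y$, $Z$; the same identification $a=2$, $C_1=\bar{C}^4T/(2^4\nu^3)$ from Proposition~\ref{loc-conv}; the moment bounds with $p=2^{q_0-1}$; the exponential moment input from Theorem~\ref{exp-mom}; the application of part~(ii) of Theorem~\ref{general_strong} and Corollary~\ref{general_cor}; and the coupling with Theorem~\ref{th_Euler_exp}, with parts~(ii) and~(iii) reduced to~(i) via \eqref{pressure-W} and \eqref{pressure-V}. Your explicit remark that the polynomial time-Euler error $Ck^\beta$ is dominated by $\exp(-\gamma|\ln\varphi(\eta)|^{1/2})$ because $\varphi(\eta)\ge k$ makes the coupling step slightly more transparent than in the paper.
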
 
\begin{remark}		\label{rk-add-gne}
(i) The speed of convergence is an increasing function of the viscosity $\nu$, and a decreasing function
of the ``strenght" of the noise $K_0\, {\rm Tr }Q$, and of  the length of the time interval.

 (ii) If $\bU^0$ is deterministic such that $|u_0-\bu^0|_{\LL^2} \leq C h^2$ for ``small" $h$, the upper estimates \eqref{speed-gen-add}--\eqref{speed-gen-add_G2} 
 hold for $\gamma < \tilde{\alpha}_0 \frac{4}{\bar{C}^2} \, \sqrt{\frac{\nu^3}{T}}$. 
\end{remark}
\begin{proof}
(i) Let $\eta=k+h^2$, $\varphi(\eta)=k+h^2+ h^2\EE\big( k\sum_{l=1}^N |\nabla \pi^l|_{L^2}^2\big)$, {\color{magenta}  $\psi(\eta)=k\leq \eta$}, 
 and suppose that $\varphi(\eta)\to 0$ as $\eta\to 0$.
Let $X(\eta)$, $Y(\eta)$ and $Z(\eta)$ be defined by \eqref{XYZ}.  Then, as in the proof of Theorem \ref{th-str-gen-mul}, 
  \eqref{eq6.1} and \eqref{eq6.1Bis} are satisfied with  $C_1= \frac{\bar{C}^4\, T}{2^4\nu^3}$ and $a=2$, 
  while \eqref{eq6.2} and \eqref{eq6.2Bis} hold with  $p=2^{q_0-1}$.  Furthermore, Theorem \ref{exp-mom} implies that \eqref{eq6.5} holds for $\tilde{\alpha}_0$
  defined above. 

Therefore, using part (ii) if Theorem \ref{general_strong} and Corollary \ref{general_cor}, we deduce that 
\begin{align*}
\EE\Big(\! \max_{0\leq l\leq N }& |\bu^l - \bU^l|_{\LL^2}^2  + \! k  \sum_{ l=1}^N |\nabla \bu^l - \nabla \bU^l|_{\LL^2}^2 \!\Big) \\
& \leq  C \exp\Big( \!
- \gamma \, 
  \Big| \ln \Big[ k+h^2+h^2\EE\Big( k\sum_{j=1}^N |\nabla \pi^j|_{L^2}^2 \Big) \Big] \Big|^{\frac{1}{2}}\Big)
\end{align*}
holds for $\gamma < \frac{2^{q_0-1}-1}{2^{q_0-1}}\, \tilde{\alpha}_0 C_1^{-\frac{1}{2}}$.
Coupling this upper estimate with \eqref{speed_add_Euler}, we deduce \eqref{speed-gen-add}. Note that unlike the convergence for the time
discretization $\bu^l$ to the true solution $u(t_l)$ described in Theorem \ref{th_Euler_exp}, the final result does not provide a polynomial speed. This
is due to the fact that in Proposition \ref{loc-conv}, the localization involves the fourth power of  $\|\bu\|_V$.

(ii) As in the proof of Theorem  \ref{th-str-gen-mul}, the inequality  \eqref{pressure-W} implies $h^2 \EE\big( \sum_{j=1}^N |\nabla \pi^j|_{L^2}^2 \big) \leq C(T) h^2/k$
Therefore, given $k$ and  $h^2 k^{-1}$ small enough,  
\eqref{speed-gen-add} implies \eqref{speed-gen-add_Bis}

(ii) If $G$ satisfies {\bf (G2)}, the inequality \eqref{pressure-V} implies $k+h^2+ h^2 \EE\big( \sum_{j=1}^N |\nabla \pi^j|_{L^2}^2 \big) \leq C(T)(k+h^2)$. 
Thus,  \eqref{speed-gen-add} implies  \eqref{speed-gen-add_G2}. 
\end{proof} 
\begin{coro}		\label{cor_add_G1}
If $u_0$ is random, independent of $W$ and such that $\EE\big[ \exp\big( \gamma_0 \|u_0\|_V^2\big) \Big]<\infty$, then the statement of Theorem
\ref{speed-gen-add_Bis} is valid with $\tilde{\alpha}_0$ replaced by $\tilde{\beta}_0:= \tilde{\alpha}_0 \frac{\gamma_0}{\gamma_0+\tilde{\alpha}_0}$. 
Indeed, it suffices to use \eqref{exp-moments-u_al}  in the proof of Theorem \ref{speed-gen-add_Bis}. 
\end{coro}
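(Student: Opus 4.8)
The plan is to exploit the fact that the proof of Theorem~\ref{th-str-gene-add} uses the assumption ``$u_0\in V$ deterministic'' at one point only: to invoke the Appendix exponential moment bound (Theorem~\ref{exp-mom}), which supplies hypothesis \eqref{eq6.5} of the general framework with rate $\tilde{\alpha}_0=\frac{\nu}{2\,\tilde{C}\,K_0\,{\rm Tr}\,Q}$ for the family $Y(\eta)=\max_{0\le l\le N-1}\|\bu^l\|_V^2$. All the other ingredients — the localized $L^2(\Omega)$ estimate \eqref{loc_mom_gene} of Proposition~\ref{loc-conv} (giving \eqref{eq6.1} and \eqref{eq6.1Bis} with $a=2$ and $C_1=\frac{\bar{C}^4\,T}{2^4\nu^3}$), the uniform moment bounds \eqref{eq6.2} and \eqref{eq6.2Bis}, and the time-scheme rate \eqref{speed_add_Euler} — depend on $u_0$ only through $\EE(\|u_0\|_V^{2^{q_0}})<\infty$ and are unchanged when $u_0$ is randomized. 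So the first step is to note that, when $u_0$ is independent of $W$ with $\EE[\exp(\gamma_0\|u_0\|_V^2)]<\infty$, every polynomial moment of $\|u_0\|_V$ is finite, so \eqref{eq6.1}, \eqref{eq6.1Bis}, \eqref{eq6.2} and \eqref{eq6.2Bis} continue to hold, now for every $p\in(1,\infty)$.

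The second step replaces the exponential moment input. I would invoke \eqref{exp-moments-u_al}, which for $u_0$ independent of $W$ with $\EE[\exp(\gamma_0\|u_0\|_V^2)]<\infty$ yields $\EE\big[\exp(\beta\max_{0\le l\le N}\|\bu^l\|_V^2)\big]<\infty$ for every $\beta<\tilde{\beta}_0:=\tilde{\alpha}_0\frac{\gamma_0}{\gamma_0+\tilde{\alpha}_0}$. The value $\tilde{\beta}_0$ is exactly what a H\"older split of the exponential produces: splitting $\max_l\|\bu^l\|_V^2$ into an initial-data contribution (controlled at rate $\gamma_0$) and a noise contribution (controlled at rate $\tilde{\alpha}_0$, exactly as in the deterministic case), and optimizing over conjugate exponents $r,r'$ so that $\beta r=\gamma_0$ and $\beta r'=\tilde{\alpha}_0$, gives $\beta=\tilde{\beta}_0$. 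Consequently \eqref{eq6.5} now holds with $\tilde{\alpha}_0$ replaced by $\tilde{\beta}_0$.

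The last step is then routine: with $a=2>1$, $C_1=\frac{\bar{C}^4\,T}{2^4\nu^3}$ and exponential-moment rate $\tilde{\beta}_0$, apply part (ii) of Theorem~\ref{general_strong} together with Corollary~\ref{general_cor} to $X(\eta)$ and $Z(\eta)$ of \eqref{XYZ}, exactly as in the proof of Theorem~\ref{th-str-gene-add}, and couple the resulting bound with \eqref{speed_add_Euler}. Since $u_0$ and $\bU^0$ then have moments of all orders, the exponent in the final estimate may be taken arbitrarily close to $\kappa_0$ computed with $\tilde{\beta}_0$ in place of $\tilde{\alpha}_0$. I do not foresee a genuine obstacle here; the only points needing care are the bookkeeping that produces $\tilde{\beta}_0$ in \eqref{exp-moments-u_al}, and checking that the constraint $\psi(\eta)M\le C_0$ in \eqref{eq6.1} remains compatible with the choice $M(\eta)\sim C\,|\ln\varphi(\eta)|$ used in Theorem~\ref{general_strong}(ii), which is unaffected by the substitution $\tilde{\alpha}_0\to\tilde{\beta}_0$.
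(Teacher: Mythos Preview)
Your approach is correct and matches the paper's exactly: the corollary's entire proof is the one-line remark already embedded in its statement, namely that one substitutes the random-initial-condition exponential moment estimate \eqref{exp-moments-u_al} (with rate $\tilde{\beta}_0$) for Theorem~\ref{exp-mom} (with rate $\tilde{\alpha}_0$) in the proof of Theorem~\ref{th-str-gene-add}, all other ingredients being unchanged.

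One small slip in your last paragraph: the corollary's hypothesis upgrades only $u_0$, not $\bU^0$, which still carries merely the $L^{2^{q_0}}$ assumption inherited from Theorem~\ref{th-str-gene-add}; hence \eqref{eq6.2} and \eqref{eq6.2Bis} hold only for $p=2^{q_0-1}$, not for every $p\in(1,\infty)$, and the conclusion is precisely $\gamma<\kappa_0$ with $\tilde{\beta}_0$ replacing $\tilde{\alpha}_0$ (the $q_0$-dependent prefactor $\frac{2^{q_0-1}-1}{2^{q_0-1}}$ intact), nothing stronger.
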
 
 \subsection{Strong convergence of Algorithm 2} 
In this subsection, we focus on Algorithm 2,  that is divergence-free finite elements,  and suppose that $\VV_h\subset V$. 
We will state the convergence results for both  multiplicative and additive  stochastic perturbations.

 \subsubsection{Multiplicative noise} 
 
In this case,  the localized convergence result from section \ref{s4bis} only involves the square
 of the $V$ norm of the time discretization. The following result is similar to Theorem \ref{th-str-gen-mul}. 
 \begin{theorem}		\label{th-str-free-mul}
 Suppose that the finite elements are divergence free, so that $\VV_h\subset V$.
 Fix an integer $q_0\geq 3$;  suppose that $u_0\in L^{2^{q_0}}(\Omega;V)$ and $U_0\in L^{2^{q_0}}(\Omega;L^2_{per})$. Let the
coefficient $G$ satisfy the assumptions {\bf (G1)}. Let $\bar{C}$ be the constant defined in \eqref{interpol}. Then if  $k,l$ are ``small"
\begin{align}			\label{speed-free-mul}
\EE\Big( \max_{ 0\leq l\leq N } |u(t_l) - \bU^l|_{\LL^2}^2 + k  \sum_{  l=1}^N |\nabla u(t_l) - \nabla \bU^l|_{\LL^2}^2 \Big)
 &\leq C(\nu,T) % \Big\{ \frac{2^2 (1-\alpha) \nu}{ T\big( \bar{C}^4+ 2^3 \nu)}
\big|  \ln \big(k+h^2\big)  \big|^{-(2^{q_0-1}-1)}. 
\end{align}
 \end{theorem}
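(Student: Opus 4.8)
The plan is to repeat the structure of the proof of Theorem \ref{th-str-gen-mul}, but feeding the general framework of Section \ref{framework} with the divergence-free localized estimates of Proposition \ref{loc-cv-divfree} in place of those of Proposition \ref{loc-conv}. Two features make the divergence-free case cleaner: since $\VV_h\subset V$ one has $\mathrm{div}\,\bQ\bE^l=0$, so the pressure term drops out of the error identity \eqref{E^m-E^(m-1)} entirely (no pressure growth condition and no constraint linking $h$ and $k$ are needed), and the relevant localization involves only the \emph{square} of $\|\bu^l\|_V$, i.e.\ one works with the exponent $a=1$ in Theorem \ref{general_strong}. A preliminary remark I would make is that, although Proposition \ref{loc-cv-divfree} is stated under {\bf (G2)} while the present theorem assumes {\bf (G1)}, only $\mathbf{P}_H G$ actually enters Algorithm~2 and the time scheme \eqref{full-imp1Bis} (the noise is tested against divergence-free fields and the iterates $\bU^l$, $\bu^l$ are divergence free); by the remark following \eqref{LipG_H}, condition {\bf (G1)} forces $\mathbf{P}_H G$ to obey the growth and Lipschitz bounds {\bf (G2)}, so \eqref{loc_mom_divfree} and \eqref{loc_mom_divfree_nab} are available.

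The concrete steps: set $\eta=k+h^2$, $\varphi(\eta)=k+h^2$, $\psi(\eta)=k\le\eta$, and take $X(\eta),Y(\eta),Z(\eta)$ as in \eqref{XYZ}. From \eqref{C3(M)divfree} one reads off that the hypotheses \eqref{eq6.1} and \eqref{eq6.1Bis} hold with $a=1$ and $C_1=\big(\tfrac{\bar{C}^2}{4\nu}+1\big)T$, the harmless multiplicative constant $C(\nu,\delta)$ in \eqref{loc_mom_divfree}/\eqref{loc_mom_divfree_nab} only rescaling $\varphi$. Exactly as in the proof of Theorem \ref{th-str-gen-mul}, the moment bounds \eqref{C(T,q)}, \eqref{U2}, \eqref{sum_grad} with $q=q_0$ give \eqref{eq6.2} and \eqref{eq6.2Bis} with $p=2^{q_0-1}$ (for $Z(\eta)$ one also uses $k\sum_l|\nabla\bu^l|_{\LL^2}^2\le T\max_l\|\bu^l\|_V^2$ together with \eqref{C(T,q)}), while \eqref{C(T,q)} gives \eqref{eq6.3} with $q=2^{q_0-1}$. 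Since here $\tfrac{q(p-1)}{ap}=2^{q_0-1}-1$, part (i) of Theorem \ref{general_strong} applied to $X(\eta)$ and part (i) of Corollary \ref{general_cor} applied to $Z(\eta)$ yield, for $k+h^2$ small,
\[
\EE\Big( \max_{0\le l\le N}|\bu^l-\bU^l|_{\LL^2}^2 + k\sum_{l=1}^N|\nabla\bu^l-\nabla\bU^l|_{\LL^2}^2\Big) \le C\,\big|\ln(k+h^2)\big|^{-(2^{q_0-1}-1)}.
\]

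To finish, I would couple this with the strong convergence of the time scheme: since $q_0\ge 3$, Theorem \ref{th_Euler_lingrow} applies with $q=q_0$ and gives $\EE\big(\max_l|u(t_l)-\bu^l|_{\LL^2}^2+k\sum_l|\nabla(u(t_l)-\bu^l)|_{\LL^2}^2\big)\le C[\ln N]^{-(2^{q_0-1}-1)}$, and for $k$ small one has $\ln N=\ln(T/k)\ge\tfrac12|\ln(k+h^2)|$, so this bound is also $\le C|\ln(k+h^2)|^{-(2^{q_0-1}-1)}$. Then the triangle inequalities $|u(t_l)-\bU^l|_{\LL^2}^2\le 2|u(t_l)-\bu^l|_{\LL^2}^2+2|\bu^l-\bU^l|_{\LL^2}^2$ and the analogue for the gradients, followed by taking the maximum over $l$, the time sum, and expectations, give \eqref{speed-free-mul}. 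I do not expect a serious obstacle: the work is entirely in correctly identifying the parameters $a=1$, $C_1$, $p$, $q$ to plug into Section \ref{framework} and in the bookkeeping of the two logarithm ``bases'' ($\ln N$ versus $|\ln(k+h^2)|$) when merging the two convergence results; the {\bf (G1)}-to-$\mathbf{P}_H G$ reduction is the only slightly non-mechanical point.
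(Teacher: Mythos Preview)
Your proof is correct and follows essentially the same route as the paper: set $\eta=\varphi(\eta)=k+h^2$, $\psi(\eta)=k$, invoke Proposition~\ref{loc-cv-divfree} to verify \eqref{eq6.1}--\eqref{eq6.1Bis} with $a=1$ and $C_1=\big(\tfrac{\bar C^2}{4\nu}+1\big)T$, use the moment bounds \eqref{C(T,q)}, \eqref{U2}, \eqref{sum_grad} for \eqref{eq6.2}--\eqref{eq6.3} with $p=q=2^{q_0-1}$, apply part~(i) of Theorem~\ref{general_strong} and Corollary~\ref{general_cor}, and then couple with Theorem~\ref{th_Euler_lingrow} via $\ln N\ge c\,|\ln(k+h^2)|$. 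Your remark that Proposition~\ref{loc-cv-divfree} is stated under {\bf (G2)} while the theorem assumes {\bf (G1)}, and that this is harmless because in the divergence-free setting only $\mathbf{P}_H G$ is tested (so the pressure term drops and the Lipschitz bound \eqref{LipG_W} suffices), is a point the paper's proof passes over silently; it is a welcome clarification rather than a different argument.
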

 \begin{remark}
 Once more, if $u_0$ and $\bU^0$ are deterministic or have moments of all order, for example are Gaussians, the exponent of the logarithm
 is arbitrary large.
 \end{remark}
 \begin{proof}
Let $\eta=k+h^2$,  $\varphi(\eta)=\eta$  and $\psi(\eta)=k\leq \eta$.  Let $X(\eta)$, $Y(\eta)$ and $Z(\eta)$ be defined by \eqref{XYZ}; then
Proposition \ref{loc-cv-divfree} implies that the upper estimate \eqref{eq6.1} is satisfied with $a=1$ and $C_1=\big[ \frac{\bar{C}^2}{4\nu} +1\big] T$.  
Using the upper estimates  \eqref{C(T,q)},   \eqref{U2} and \eqref{sum_grad}, we deduce that \eqref{eq6.2} and \eqref{eq6.2Bis} are true for 
$p=2^{q_0-1}$, while \eqref{C(T,q)}  implies that \eqref{eq6.3} holds for
$q=2^{q_0-1}$.  Therefore, parts (i) of Theorem \ref{general_strong} and Corollary \ref{general_cor} imply
\[ \EE\Big( \max_{ 0\leq l\leq N } |u(t_l) - \bU^l|_{\LL^2}^2 + k  \sum_{  l=1}^N |\nabla u(t_l) - \nabla \bU^l|_{\LL^2}^2 \Big)
 \leq C(\nu,T) % \Big\{ \frac{2^2 (1-\alpha) \nu}{ T\big( \bar{C}^4+ 2^3 \nu)}
\big|  \ln \big(k+h^2\big)  \big|^{-(2^{q_0-1}-1)}. 
\]

  Since $k \leq  k+h^2 << e^{-1}$, we have $\big\{ \ln N\big\}^{-(2^{q-1}-1)} \leq  C 
  \big|  \ln \big(k+h^2\big)  \big|^{-(2^{q-1}-1)}$; the upper estimate \eqref{speed_lin_growth_Euler} 
 completes the proof of \eqref{speed-free-mul}. 
 \end{proof} 

\subsubsection{Additive noise }

Assume condition {\bf (G1)}
 holds with $K_1=0$; then the strong speed of convergence is polynomial. 
 \begin{theorem}		\label{th-str-free-add}
 Let $G$ satisfy condition {\bf (G1)} with $K_1=0$, i.e.,  $\| G(u)\|_{{\mathcal L}(K,\WW^{1,2})}^2 \leq K_0$.
Let  % either
 $u_0\in V$ be deterministic,  
and  $U_0\in L^{2^{q_0}}(\Omega;L^2_{per})$ for some $q_0\in [3,\infty)$.  Set 
\begin{equation}		\label{alpha0-C1}
  \tilde{\alpha}_0:= \frac{\nu}{ 2\, \tilde{C}  K_0 \mbox{\rm Tr}\, Q},  \quad C_0:= \frac{\bar{C}^2 T}{2\nu}\quad
t \mbox{\rm and } \; C_1=: \Big[\frac{ \bar{C}^2}{4\nu}+1\Big] T,
 \end{equation}
  where  $\bar{C}$ is defined by \eqref{interpol} and
$\tilde{C}$ is defined by $|\nabla u|_{\LL^2}^2 \leq \tilde{C} |Au|_{\LL^2}^2$. 

Define the critical exponents as follows: 
\begin{equation}		\label{beta-kappa}
 \beta_0:= \frac{1}{2} \Big( \frac{ \tilde{\alpha_0}}{\tilde{\alpha}_0 + C_0}\Big)  \quad  \mbox{\rm and}\;\,  \kappa_0:=  
\frac{\tilde{\alpha}_0 \big( 2^{q_0-1}-1)}{\tilde{\alpha}_0 \big( 2^{q_0-1}-1) + C_1 2^{q_0-1}}, 
\end{equation} 
 Then for $\gamma_1 < \kappa_0$ and $\gamma < \beta_0$, 
setting $k:=\frac{T}{N}$,
we have for $N$ large enough and small $h$
\begin{equation}			\label{speed-free-add}
\EE\Big(\! \max_{ 0\leq l\leq N} |u(t_l) - \bU^l|_{\LL^2}^2 + k\sum_{l=1}^N |\nabla u(t_l) - \nabla \bU^l|_{\LL^2}^2\ \Big) \leq C  
\big[ \big( k+h^2\big)^{\gamma_1} +  k^\gamma\big] .
\end{equation}
 \end{theorem}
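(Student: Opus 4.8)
The plan is to feed the localized $L^2(\Omega)$-estimate of Proposition~\ref{loc-cv-divfree} into the abstract device of Theorem~\ref{general_strong} and Corollary~\ref{general_cor}, and then to couple the resulting bound for $\bE^l=\bu^l-\bU^l$ with the strong rate \eqref{speed_add_Euler} for the fully implicit time scheme. Concretely, I would set $\eta:=k+h^2$, $\varphi(\eta):=k+h^2$, $\psi(\eta):=k\le\eta$, and take $X(\eta)$, $Y(\eta)$, $Z(\eta)$ as in \eqref{XYZ}, so that $\varphi(\eta)\to0$ as $\eta\to0$.

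First I would verify the hypotheses of the abstract framework. Since the finite elements are divergence-free, $\VV_h\subset V$, Proposition~\ref{loc-cv-divfree} --- namely \eqref{loc_mom_divfree} and \eqref{loc_mom_divfree_nab} together with the value of $\tilde C_3(M)$ in \eqref{C3(M)divfree} --- shows that \eqref{eq6.1} and \eqref{eq6.1Bis} hold with the decisive exponent $a=1$ and with $C_1=\bigl[\tfrac{\bar C^2}{4\nu}+1\bigr]T$ (as in \eqref{alpha0-C1}); the side constraint $\psi(\eta)M\le C_0$ is compatible with the choice $M(\eta)\sim|\ln\varphi(\eta)|$ used in Theorem~\ref{general_strong}(iii), because $k\,|\ln(k+h^2)|\to0$. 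The moment bounds \eqref{C(T,q)}, \eqref{U2} and \eqref{sum_grad} give \eqref{eq6.2} and \eqref{eq6.2Bis} with $p=2^{q_0-1}$, while \eqref{C(T,q)} again gives \eqref{eq6.3} with $q=2^{q_0-1}$. Finally, since $u_0\in V$ is deterministic and $G$ satisfies {\bf (G1)} with $K_1=0$, Theorem~\ref{exp-mom} provides the exponential moment \eqref{eq6.5} with $\tilde\alpha_0=\frac{\nu}{2\tilde C K_0\,\mathrm{Tr}\,Q}$.

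Next I would invoke part (iii) of Theorem~\ref{general_strong} (applied to $X(\eta)$) and of Corollary~\ref{general_cor} (applied to $Z(\eta)$); this is the case $a=1$, which is exactly what turns the localized estimate into a \emph{polynomial} --- rather than logarithmic or stretched-exponential --- rate, yielding
\[
\EE\bigl(X(\eta)\bigr)+\EE\bigl(Z(\eta)\bigr)\le C\,\varphi(\eta)^{\gamma_1}=C\,(k+h^2)^{\gamma_1},
\qquad \gamma_1<\kappa_0=\frac{\tilde\alpha_0(2^{q_0-1}-1)}{\tilde\alpha_0(2^{q_0-1}-1)+C_1\,2^{q_0-1}}.
\]
Then I would apply Theorem~\ref{th_Euler_exp}, which in this additive setting gives $\EE\bigl(\max_l|u(t_l)-\bu^l|_{\LL^2}^2+k\sum_l|\nabla(u(t_l)-\bu^l)|_{\LL^2}^2\bigr)\le C\,k^{\beta}$ for every $\beta<\beta_0=\tfrac12\frac{\tilde\alpha_0}{\tilde\alpha_0+C_0}$ with $C_0=\tfrac{\bar C^2T}{2\nu}$. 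Using $|u(t_l)-\bU^l|_{\LL^2}^2\le2|u(t_l)-\bu^l|_{\LL^2}^2+2|\bu^l-\bU^l|_{\LL^2}^2$ (and the analogous inequality for the gradient terms, summed against $k$ over $l$) and adding the two bounds yields \eqref{speed-free-add}.

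The only genuinely delicate ingredient --- already isolated in Proposition~\ref{loc-cv-divfree} --- is that here the localization set $\Omega_l(M)$ involves only the \emph{square} of $\|\bu^l\|_V$, so the constant in front of $M$ inside the exponential of \eqref{loc_mom_divfree} stays finite with $a=1$; this is precisely what makes Theorem~\ref{general_strong}(iii) applicable and hence the overall rate polynomial (contrast Theorem~\ref{th-str-gene-add}, where $a=2$). Beyond that, I would only need to check the constant bookkeeping: that $\kappa_0$ in \eqref{beta-kappa} coincides with $\frac{\tilde\alpha_0(p-1)}{\tilde\alpha_0(p-1)+C_1p}$ for $p=2^{q_0-1}$, and that $\beta_0$ in \eqref{beta-kappa} is indeed the exponent delivered by Theorem~\ref{th_Euler_exp} with the corrected constant $\tilde\alpha_0$ noted in the remark following that theorem. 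The remaining computations are routine concatenations of the cited estimates.
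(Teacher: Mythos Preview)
Your proposal is correct and follows essentially the same approach as the paper's own proof: set $\eta=\varphi(\eta)=k+h^2$, $\psi(\eta)=k$, verify \eqref{eq6.1}--\eqref{eq6.1Bis} via Proposition~\ref{loc-cv-divfree} with $a=1$ and $C_1=\bigl[\tfrac{\bar C^2}{4\nu}+1\bigr]T$, check \eqref{eq6.2}--\eqref{eq6.2Bis} for $p=2^{q_0-1}$ and \eqref{eq6.5} via Theorem~\ref{exp-mom}, apply part~(iii) of Theorem~\ref{general_strong} and Corollary~\ref{general_cor}, and then couple with \eqref{speed_add_Euler}. Your write-up is in fact slightly more explicit than the paper's (you spell out the triangle-inequality step and the identification of $\kappa_0$ with $\frac{\tilde\alpha_0(p-1)}{\tilde\alpha_0(p-1)+C_1p}$), but the argument is the same.
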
 
 \begin{remark}
 The exponent $\beta_0$ (resp. $\kappa_0$),  which reflects the speed for the time (resp. the finite elements) approximation,  is an increasing function
  of the viscosity and decreasing functions of
 $K_0 {\rm Tr }Q\,$. For a given viscosity, as $K_0 {\rm Tr}\, Q \to 0$,  $\kappa_0$ converges to 1. 
 This limit is twice the corresponding one $\frac{1}{2}$
 of $\beta_0$; this is consistent with the scaling between the space and time regularity
  for the heat kernel, which is behind this model. Note that these upper bounds are also approached for a given noise if the viscosity is ``large". 
 The exponent $\beta_0$  cannot be better than the time regularity. 
 The maximal exponents cannot be improved.   \end{remark}
 \begin{proof}
 Let $\eta=k+h^2$,  $\varphi(\eta)=\eta$ and $\psi(\eta)=k\leq \eta$.   As in the proof of Theorem \ref{th-str-free-mul}, 
 \eqref{eq6.1} and \eqref{eq6.1Bis} are satisfied 
 for $a=1$ and $C_1$ defined in \eqref{alpha0-C1}. Furthermore, \eqref{eq6.2} and \eqref{eq6.2Bis} hold for $p=2^{q_0-1}$. Since the noise is additive, \eqref{eq6.5}
 is satisfied for the parameter $\tilde{\alpha}_0$ defined in \eqref{alpha0-C1}. Using parts (iii) of Theorem \ref{general_strong}  and Corollary \ref{general_cor},
 we deduce  
 \[ \EE\Big(\! \max_{ 0\leq l\leq N} |\bu^l - \bU^l|_{\LL^2}^2 + k\sum_{l=1}^N |\nabla \bu^l- \nabla \bU^l|_{\LL^2}^2\ \Big) \leq C  
\big( k+h^2\big)^{\gamma_1} ,\]
 for $\gamma_1<\kappa_0$, where $\kappa_0$ is defined in \eqref{beta-kappa}. Coupling this upper estimate with \eqref{speed_add_Euler}, we deduce
 \eqref{speed-free-add}; this completes the proof. 
 \end{proof} 
  \begin{coro}		\label{cor_add_divfree}
If $u_0$ is random, independent of $W$ and such that $\EE\big[ \exp\big( \gamma_0 \|u_0\|_V^2\big) \Big]<\infty$, then the statement of Theorem
\ref{th-str-free-add} is valid with $\tilde{\alpha}_0$ replaced by $\tilde{\beta}_0:= \tilde{\alpha}_0 \frac{\gamma_0}{\gamma_0+\tilde{\alpha}_0}$. 
Indeed, it suffices to use \eqref{exp-moments-u_al}  in the proof of Theorem \ref{th-str-free-add}. 
\end{coro}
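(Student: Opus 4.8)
The plan is to inspect the proof of Theorem~\ref{th-str-free-add} and isolate the one place where the hypothesis ``$u_0\in V$ deterministic'' is actually used, and then replace that input by its random-initial-condition counterpart. Recall that in that proof one sets $\eta=k+h^2$, $\varphi(\eta)=\eta$, $\psi(\eta)=k\leq\eta$, and takes $X(\eta),Y(\eta),Z(\eta)$ as in \eqref{XYZ}. The verification of \eqref{eq6.1} and \eqref{eq6.1Bis} comes from Proposition~\ref{loc-cv-divfree} (with $a=1$ and $C_1=\big[\frac{\bar C^2}{4\nu}+1\big]T$), and that of \eqref{eq6.2}, \eqref{eq6.2Bis} from \eqref{C(T,q)}, \eqref{U2}, \eqref{sum_grad} (with $p=2^{q_0-1}$); none of these involves $\tilde\alpha_0$ or the law of $u_0$, so they are unchanged. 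The determinism of $u_0$ is used solely to check hypothesis \eqref{eq6.5} of Theorem~\ref{general_strong}(iii), i.e.\ that $Y(\eta)=\max_{0\leq l\leq N-1}\|\bu^l\|_V^2$ satisfies $\EE\big[\exp(\alpha Y(\eta))\big]<\infty$ for $\alpha\in(0,\tilde\alpha_0)$, which for deterministic $u_0$ is supplied by Theorem~\ref{exp-mom}.

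First I would replace that appeal to Theorem~\ref{exp-mom} by the random-initial-condition estimate \eqref{exp-moments-u_al} from the Appendix: if $u_0$ is independent of $W$ with $\EE[\exp(\gamma_0\|u_0\|_V^2)]<\infty$, then $\sup_N\EE\big[\exp(\alpha\max_{0\leq l\leq N}\|\bu^l\|_V^2)\big]<\infty$ for every $\alpha<\tilde\beta_0:=\tilde\alpha_0\gamma_0/(\gamma_0+\tilde\alpha_0)$. This is precisely \eqref{eq6.5} with $\tilde\alpha_0$ replaced by $\tilde\beta_0$, so parts (iii) of Theorem~\ref{general_strong} and Corollary~\ref{general_cor} apply verbatim and yield
\[
\EE\Big(\max_{0\leq l\leq N}|\bu^l-\bU^l|_{\LL^2}^2+k\sum_{l=1}^N|\nabla\bu^l-\nabla\bU^l|_{\LL^2}^2\Big)\leq C\,(k+h^2)^{\gamma_1},\qquad \gamma_1<\frac{\tilde\beta_0(2^{q_0-1}-1)}{\tilde\beta_0(2^{q_0-1}-1)+C_1 2^{q_0-1}}.
\]
One should also check that the choice $M(\eta)\sim C|\ln\varphi(\eta)|$ used inside Theorem~\ref{general_strong}(iii) still satisfies $\psi(\eta)M(\eta)\to0$; this is unchanged, since neither $M(\eta)$ nor $\varphi(\eta)$ depends on the law of $u_0$.

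Second, I would track the other occurrence of $\tilde\alpha_0$ in the statement of Theorem~\ref{th-str-free-add}, namely through the time-discretization rate \eqref{speed_add_Euler} of Theorem~\ref{th_Euler_exp}, whose exponent $\beta_0$ in \eqref{beta0} is governed by the exponential-moment exponent of $\sup_{t\leq T}\|u(t)\|_V^2$. For a random $u_0$ with the stated tail, the Appendix (via \eqref{exp-moments-u_al} together with Theorem~\ref{th_mom_exp_u}) furnishes exponential moments of the true solution with exponent $\tilde\beta_0$, so \eqref{speed_add_Euler} holds with $k^\gamma$ for every $\gamma<\beta_0=\frac12\,\tilde\beta_0/(\tilde\beta_0+C_0)$. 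Combining the two bounds exactly as in the proof of Theorem~\ref{th-str-free-add} then gives \eqref{speed-free-add} with $\tilde\alpha_0$ replaced by $\tilde\beta_0$ throughout the definitions \eqref{alpha0-C1}--\eqref{beta-kappa}; this is the parallel of Corollary~\ref{cor_add_G1} for Algorithm~2.

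There is essentially no serious obstacle here: the entire content of the corollary is the bookkeeping of where $\tilde\alpha_0$ enters, and the substitution is legitimate precisely because $\tilde\beta_0$ is the exponent the Appendix produces for a Gaussian-type initial condition. The only mildly delicate point is making sure that both appearances of $\tilde\alpha_0$ — in the localized estimate (through \eqref{eq6.5}) and in the time-scheme rate (through \eqref{beta0}) — are replaced consistently, and that the resulting critical exponents $\kappa_0$ and $\beta_0$ are the ones obtained after the substitution.
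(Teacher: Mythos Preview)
Your proposal is correct and follows exactly the same approach as the paper, which offers no separate proof beyond the sentence ``it suffices to use \eqref{exp-moments-u_al} in the proof of Theorem~\ref{th-str-free-add}''; you have simply unpacked that sentence, correctly tracing the two places where $\tilde\alpha_0$ enters (via \eqref{eq6.5} for the space-time error and via \eqref{beta0} for the time-scheme error) and replacing each by $\tilde\beta_0$. One minor slip: the exponential moments for the continuous solution with random $u_0$ come directly from \eqref{exp-moments-u_al} in Theorem~\ref{th_exp_u}, not from Theorem~\ref{th_mom_exp_u} (which treats the deterministic case).
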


\section{Appendix}
In this section, for an additive noise, we prove the existence of exponential moments of the $V$ norm for the time discretization $u_N(t_l)$ uniformly
on the time grid $\{ t_l=\frac{lT}{N}, \, l=0, \cdots, N\}$ and with a bound independent of $N$.
 This is similar to a similar result proved in \cite{HaiMat} for the solution $u$ to
the stochastic 2D Navier-Stokes equation (see also \cite{Be-Mi_time}). 
\subsection{Deterministic initial condition} We first suppose that $u_0\in V$ is deterministic. 
\begin{theorem}		\label{exp-mom}
Let $G$ satisfy condition  {\bf (G1)}  with $K_1=0$, that is $\| G(u)\|_{{\mathcal L}(K,\WW^{1,2})}^2 \leq K_0$ and
set  $\tilde{\alpha}_0:= \frac{\nu}{ 2\, \tilde{C} \, K_0\,  {\rm Tr} Q}$, where the constant $\tilde{C}$ is defined by
$|\nabla u|_{\LL^2}^2 \leq \tilde{C} |Au|_{\LL^2}^2$. 
 
Let $u_0\in V$; 
   then  for $0<\alpha < \tilde{\alpha}_0$, there exists a positive constant $C(\alpha)$ such that for $N$ large enough, 
\begin{equation} 	\label{exp-moments-ul}
\EE\Big[ \exp\Big( \alpha  \max_{0\leq l\leq N} \| u_N(t_l)\|_V^2 
 \Big) \Big] 
= C(\alpha)<\infty. 
\end{equation} 
\end{theorem}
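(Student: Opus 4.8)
The plan is to combine a discrete energy identity for the iterates $\bu^l$ with a discrete exponential–martingale argument, in the spirit of \cite{HaiMat} and \cite{Be-Mi_time}. \emph{Step 1 (energy estimate).} Fix $l\in\{1,\dots,N\}$ and test the projected scheme \eqref{full-imp1Bis} with $\phi=(I+A)\bu^l$; since $D$ is the torus the nonlinear term cancels completely, because $b(\bu^l,\bu^l,\bu^l)=0$ by \eqref{B} and $\langle B(\bu^l,\bu^l),A\bu^l\rangle=0$ by \eqref{A-B}. (This test function is admissible once one knows $\bu^l\in\WW^{3,2}$, which follows from elliptic regularity for the periodic Stokes problem under {\bf(G1)}; alternatively one first truncates the scheme to finitely many eigenfunctions of $A$ and passes to the limit.) Writing $\eta_l:={\bf P}_HG(\bu^{l-1})\,\Delta_lW$ and using $(\bu^l-\bu^{l-1},(I+A)\bu^l)=\tfrac12[\|\bu^l\|_V^2-\|\bu^{l-1}\|_V^2+\|\bu^l-\bu^{l-1}\|_V^2]$ together with $(\nabla\bu^l,\nabla(I+A)\bu^l)=|\nabla\bu^l|_{\LL^2}^2+|A\bu^l|_{\LL^2}^2$, splitting $(\eta_l,(I+A)\bu^l)=(\eta_l,(I+A)\bu^{l-1})+(\eta_l,(I+A)(\bu^l-\bu^{l-1}))$ and bounding the last scalar product by $\tfrac14\|\bu^l-\bu^{l-1}\|_V^2+\|\eta_l\|_{\WW^{1,2}}^2$, one gets after multiplying by $2$
\[
\|\bu^l\|_V^2-\|\bu^{l-1}\|_V^2+\tfrac12\|\bu^l-\bu^{l-1}\|_V^2+2\nu k\big(|\nabla\bu^l|_{\LL^2}^2+|A\bu^l|_{\LL^2}^2\big)\le 2(\eta_l,(I+A)\bu^{l-1})+2\|\eta_l\|_{\WW^{1,2}}^2 .
\]
Summing over $l=1,\dots,n$ and discarding the nonnegative increment term gives $z_n+D_n\le z_0+M_n+J_n$ with $z_n:=\|\bu^n\|_V^2$, $D_n:=2\nu k\sum_{l=1}^n(|\nabla\bu^l|_{\LL^2}^2+|A\bu^l|_{\LL^2}^2)$, $M_n:=2\sum_{l=1}^n(\eta_l,(I+A)\bu^{l-1})$ an $(\mathcal F_{t_l})$-martingale, and $J_n:=2\sum_{l=1}^n\|\eta_l\|_{\WW^{1,2}}^2$; hence $\max_{0\le n\le N}z_n\le z_0+J_N+\max_{0\le n\le N}(M_n-D_n)$, and it suffices to bound the exponential moments of the two terms on the right separately and combine by Cauchy--Schwarz.

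\emph{Step 2 (exponential supermartingale).} Because $\bu^{l-1}$ and $G(\bu^{l-1})$ are $\mathcal F_{t_{l-1}}$-measurable while $\Delta_lW$ is independent of $\mathcal F_{t_{l-1}}$, the increment $M_l-M_{l-1}=2(\eta_l,(I+A)\bu^{l-1})$ is conditionally centered Gaussian with variance $\langle M\rangle_l-\langle M\rangle_{l-1}=4k\sum_jq_j({\bf P}_HG(\bu^{l-1})e_j,(I+A)\bu^{l-1})^2$; consequently, for every $\beta>0$, $\mathcal E_n^{(\beta)}:=\exp(\beta M_n-\tfrac{\beta^2}{2}\langle M\rangle_n)$ is a genuine nonnegative martingale with $\EE\,\mathcal E_n^{(\beta)}=1$ (no Novikov condition is needed, the conditional law being exactly Gaussian). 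Using Cauchy--Schwarz, the fact that ${\bf P}_H$ is a contraction on $\WW^{1,2}$, the trace bound $\sum_jq_j\|{\bf P}_HG(u)e_j\|_{\WW^{1,2}}^2\le K_0\,{\rm Tr}\,Q$ (which is \eqref{growthG_W} with $K_1=0$), and the Poincar\'e inequality \eqref{Poincare} in the form $\|\bu^{l-1}\|_V^2\le\tilde C\big(|\nabla\bu^{l-1}|_{\LL^2}^2+|A\bu^{l-1}|_{\LL^2}^2\big)$, one obtains $\langle M\rangle_n\le 4kK_0\,{\rm Tr}\,Q\sum_{l=1}^nz_{l-1}\le c_0+\tfrac{2\tilde CK_0\,{\rm Tr}\,Q}{\nu}D_n=c_0+\tilde\alpha_0^{-1}D_n$, where $c_0:=4TK_0\,{\rm Tr}\,Q\,\|u_0\|_V^2$. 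Hence, for $0<\beta\le2\tilde\alpha_0$ we have $\tfrac{\beta^2}{2}\langle M\rangle_n-\beta D_n\le\tfrac{\beta^2}{2}c_0$, so $\exp(\beta(M_n-D_n))\le e^{\beta^2c_0/2}\,\mathcal E_n^{(\beta)}$; Doob's maximal inequality for $\mathcal E^{(\beta)}$ then yields $\PP\big(\max_{0\le n\le N}(M_n-D_n)>R\big)\le e^{\beta^2c_0/2}e^{-\beta R}$ for all $R>0$, uniformly in $N$, and integrating this tail against $e^{\theta R}\,dR$ for any $\theta<\beta$ shows $\EE\exp\big(\theta\max_{n\le N}(M_n-D_n)\big)<\infty$ uniformly in $N$. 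Given $\alpha<\tilde\alpha_0$, choose $\beta\in(2\alpha,2\tilde\alpha_0]$ to cover $\theta=2\alpha$.

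\emph{Step 3 ($\chi^2$ term and conclusion).} Conditionally on $\mathcal F_{t_{l-1}}$, $\eta_l$ is a centered Gaussian vector in $\WW^{1,2}$ with $\EE[\|\eta_l\|_{\WW^{1,2}}^2\mid\mathcal F_{t_{l-1}}]=k\sum_jq_j\|{\bf P}_HG(\bu^{l-1})e_j\|_{\WW^{1,2}}^2\le kK_0\,{\rm Tr}\,Q$; the elementary bound $\EE\exp(a\|\xi\|^2)\le\exp\!\big(a\sigma^2/(1-2a\sigma^2)\big)$, valid for a centered Gaussian $\xi$ with $\EE\|\xi\|^2=\sigma^2$ and $2a\sigma^2<1$, then gives $\EE[\exp(2c\|\eta_l\|_{\WW^{1,2}}^2)\mid\mathcal F_{t_{l-1}}]\le\exp\!\big(2ckK_0{\rm Tr}\,Q/(1-4ckK_0{\rm Tr}\,Q)\big)$ once $4ckK_0\,{\rm Tr}\,Q<1$, and iterating the conditional expectations over $l=1,\dots,N$ (with $Nk=T$) yields $\EE\exp(cJ_N)\le\exp\!\big(2cTK_0{\rm Tr}\,Q/(1-4ckK_0{\rm Tr}\,Q)\big)$, finite and bounded uniformly in $N$ as soon as $N$ is large. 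Taking $c=2\alpha$ and combining with Step~2 through the Cauchy--Schwarz inequality,
\[
\EE\exp\!\Big(\alpha\max_{0\le n\le N}\|\bu^n\|_V^2\Big)\le e^{\alpha\|u_0\|_V^2}\big\{\EE\exp(2\alpha J_N)\big\}^{1/2}\big\{\EE\exp\!\big(2\alpha\max_{n\le N}(M_n-D_n)\big)\big\}^{1/2}=:C(\alpha)<\infty
\]
for $N$ large, which is \eqref{exp-moments-ul} since $u_N(t_l)=\bu^l$.

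The step I expect to be the main obstacle is Step~1: making rigorous the test function $\phi=(I+A)\bu^l$, in particular the identity $(\nabla\bu^l,\nabla A\bu^l)=|A\bu^l|_{\LL^2}^2$, which presupposes $\bu^l\in\WW^{3,2}$ (obtained by bootstrapping Stokes regularity, or bypassed by a spectral Galerkin truncation of the scheme). Once the energy identity is in hand, the only quantitative point is the comparison $\langle M\rangle_n\le\tilde\alpha_0^{-1}D_n+O(1)$: it is precisely the matching of the factor $4kK_0\,{\rm Tr}\,Q$ in the bracket against the factor $2\nu k$ in the dissipation, after invoking \eqref{Poincare}, that forces the admissible range of $\alpha$ to be exactly $(0,\tilde\alpha_0)$ with $\tilde\alpha_0=\nu/(2\tilde CK_0\,{\rm Tr}\,Q)$, and the rest is the standard exponential–supermartingale plus Doob argument, with ``$N$ large'' entering only to keep $\EE\exp(2\alpha J_N)$ finite.
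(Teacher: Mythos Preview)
Your proof is correct and follows essentially the same strategy as the paper: a discrete $V$-energy identity exploiting \eqref{A-B}, followed by an exponential-martingale comparison of the quadratic variation against the dissipation $2\nu k\sum_l(|\nabla\bu^l|_{\LL^2}^2+|A\bu^l|_{\LL^2}^2)$, which is precisely what pins down the threshold $\tilde{\alpha}_0=\nu/(2\tilde C K_0\,{\rm Tr}\,Q)$.

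The differences are purely in execution. The paper tests separately with $\phi=\bu^l$ and $\phi=A\bu^l$, whereas you combine them into $\phi=(I+A)\bu^l$; since $(f,(I+A)g)$ is the $\WW^{1,2}$ inner product, your Cauchy--Schwarz on the bracket avoids the factor~$2$ from $(a+b)^2\le 2(a^2+b^2)$ that the paper incurs in \eqref{upper_qua-var}, though this gain is later spent in your final Cauchy--Schwarz split, so the thresholds coincide. The paper rewrites the discrete martingale as a continuous stochastic integral $\tilde M_t$ and applies H\"older with exponents $\mu,\mu/(\mu-1)$ to the exponential martingale factor; you stay discrete, use that the increments are conditionally Gaussian to get a genuine martingale $\mathcal E^{(\beta)}$, and then obtain the tail bound $\PP(\max_n(M_n-D_n)>R)\le C e^{-\beta R}$ via Doob and integrate in~$R$. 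Your route is arguably cleaner here, as it sidesteps the need to control the \emph{expected maximum} of an exponential martingale (which is not simply bounded by~$1$). For the $\chi^2$-type term $J_N$ both proofs do the same thing: factor over the independent increments and use a small-parameter Gaussian moment bound, which is where the ``$N$ large enough'' hypothesis enters.

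Your caveat about Step~1 is well placed but matches the paper's level of rigor: the paper likewise justifies $\phi=A\bu^l$ only by invoking ``integration by parts'' together with the a~priori bound $\EE\big(k\sum_l|A\bu^l|_{\LL^2}^2\big)<\infty$ from Lemma~\ref{moments_uN}. Either a spectral Galerkin truncation or the observation that on the periodic torus $A$ commutes with ${\bf P}_H$ and with derivatives closes this point.
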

\begin{proof}
As in sections \ref{s4} and \ref{s5}, for fixed $N$ to ease notations let $\bu^l=u_N(t_l)$, $l=0, \cdots, N$. 
Since Lemma \ref{moments_uN} implies that $\EE\big( \frac{T}{N}\sum_{l=1}^N |A\bu^l|_{\LL^2}^2\big) <\infty$, using integration by parts
we may write 
 \eqref{full-imp1} with $\phi = A\bu^l$; this yields a.s. 
\[ (\bu^l-\bu^{l-1}, A\bu^l) + \frac{T}{N}\Big[ \nu (A \bu^l,  A\bu^l) +  \langle B(\bu^l, \ \bu^l),\, A\bu^l\rangle -  (\pi^l, {\rm div}
A\bu^l) \Big]= \big( G(\bu^{l-1}) \Delta_lW, \, A\bu^l\big).\]
Using \eqref{A-B},  ${\rm div}\, \bu^l=0$, and  integration by parts  in the stochastic term, we deduce
\[ \big( \nabla \bu^l - \nabla \bu^{l-1}, \, \nabla \bu^l\big) + \nu \frac{T}{N} |A\bu^l|_{\LL^2}^2
= \big( \nabla[ G(\bu^{l-1}) \Delta_l W]\, , \, \nabla \bu^l\big).\]
The identity $(a-b,a)=\frac{1}{2} \big( |a|^2 - |b|^2 + |a-b|^2\big)$ implies
\begin{align}   |\nabla  & \bu^l|_{\LL^2}^2 - |\nabla \bu^{l-1}|_{\LL^2}^2 
+ |\nabla ( \bu^l-\bu^{l-1})|_{\LL^2}^2   
\nonumber \\
=&\; 2\big( \nabla[G(\bu^{l-1}) \Delta_l W]\, , \, \nabla \bu^{l}\big)  - 2 \,  \nu\, \frac{T}{N}  |A\bu^l]_{\LL^2}^2  \label{Gl}\\
=&\;  2 \big( \nabla[G(\bu^{l-1}) \Delta_l W]\, , \, \nabla \bu^{l-1}\big) -  2 \frac{T}{N}  \nu |A\bu^l]_{\LL^2}^2  +
2 \big( \nabla[G(\bu^{l-1}) \Delta_l W]\, , \, \nabla [ \bu^l - \bu^{l-1}]\big). \label{Gl-1}
\end{align} 
A similar computation, based on  \eqref{full-imp1} with $\phi = \bu^l$ implies a.s. 
\begin{align}  |  \bu^l|_{\LL^2}^2 &- | \bu^{l-1}|_{\LL^2}^2 
+ | ( \bu^l-\bu^{l-1})|_{\LL^2}^2  =  2\big( G(\bu^{l-1}) \Delta_l W\, , \,  \bu^{l}\big)  - 2 \,  \nu\, \frac{T}{N}  |\nabla \bu^l]_{\LL^2}^2  \label{Gl-Bis}\\
& =\;  2 \big( G(\bu^{l-1}) \Delta_l W\, , \, \bu^{l-1}\big) -  2 \frac{T}{N}  \nu |\nabla \bu^l]_{\LL^2}^2  +
2 \big( G(\bu^{l-1}) \Delta_l W\, , \,  \bu^l - \bu^{l-1} \big). \label{Gl-1Bis}
\end{align} 
For $l=1$, condition {\bf (G1)} with $K_1=0$, the Cauchy-Schwarz and Young inequalities   
imply for $\lambda \in (0,1)$
\[ 2 \big| \big( \nabla[G(\bu^0) \Delta_l W]\, , \, \nabla \bu^{1}\big)\big| \leq \frac{1}{\lambda}
 \| G(\bu^{0})\|_{{\mathcal L}(K,\WW^{1,2})}^2 \|\Delta_1 W\|_K^2
+ \lambda  |\nabla \bu^1|_{\LL^2}^2.\]
Hence \eqref{Gl} implies 
\[ %begin{equation}
|\nabla \bu^1|_{\LL^2}^2  \leq \frac{1}{1-\lambda}  |\nabla u_0|_{\LL^2}^2 +  \frac{1}{\lambda (1-\lambda)}
\, K_0 \|\Delta_1 W\|_K^2 -  2\, \frac{T}{N} \nu |A\bu^1|_{\LL^2}^2 .
\] %end{equation} 
For $l\geq 2$, a similar argument  using the Cauchy-Schwarz and Young inequalities implies
\begin{align*}
2 \big( \nabla[G(\bu^{l-1}) \Delta_lW],& \, \nabla[\bu^l-\bu^{l-1}] \big)  \leq \; 2   | \nabla(G(\bu^{l-1}) \Delta_lW ) |_{\LL^2}
|\nabla (\bu^l-\bu^{l-1})|_{\LL^2} \\
\leq & \;  \|G(\bu^{l-1})\|_{{\mathcal L}(K, \WW^{1,2})}^2 \| \Delta_l W\|_K^2 + | \nabla (\bu^l-\bu^{l-1})|_{\LL^2}^2. 
\end{align*}
Therefore, \eqref{Gl-1}  and condition \eqref{growthG_W} in {\bf (G1)}  with $K_1=0$ imply 
\[   |\nabla \bu^l|_{\LL^2}^2 - |\nabla \bu^{l-1}|_{\LL^2}^2   
\leq 2\big( \nabla[ G(\bu^{l-1}) \Delta_l W], \nabla \bu^{l-1}\big)
- 2\, \frac{T}{N} \nu |A\bu^ l|_{\LL^2}^2 + K_0 \|  \Delta_l W \|_K^2.\]
A similar argument, based on \eqref{growthG_L} in Condition {\bf(G1)} with $K_1=0$,  implies
\begin{align*}  %begin{equation}
| \bu^1|_{\LL^2}^2  &\leq \frac{1}{1-\lambda}  | u_0|_{\LL^2}^2 +  \frac{1}{\lambda (1-\lambda)}
\, K_0 \|\Delta_1 W\|_K^2 -  2\, \frac{T}{N} \nu |\nabla \bu^1|_{\LL^2}^2 , \\
 | \bu^l|_{\LL^2}^2 - | \bu^{l-1}|_{\LL^2}^2   
& \leq 2\big(  G(\bu^{l-1}) \Delta_l W,  \bu^{l-1}\big)
 - 2\, \frac{T}{N} \nu |\nabla \bu^ l|_{\LL^2}^2 + K_0 \|  \Delta_l W \|_K^2,\quad l=2, ... N.
 \end{align*}  
Adding these inequalities for $l=1, \cdots, n$, we deduce for $n = 2, \cdots, N$   
\begin{align}		\label{max-nabla-ul}
  \| \bu^n\|_V^2  & 
 \leq \frac{1}{1-\lambda} \|u_0\|_V^2  
+ \frac{2}{\lambda (1-\lambda)} K_0 \sum_{l=1}^n \|\Delta_l W\|_K^2 
 -  2 \nu   \frac{T}{N} 
\sum_{l=1}^n \big( |\nabla \bu^l|_{\LL^2}^2 + |A\bu^l|_{\LL^2}^2 \big) \nonumber\\
& + \sum_{l=2}^n 2 \Big[ \big(  G(\bu^{l-1}) \Delta_l W \, , \,  \bu^{l-1}\big) +
\big( \nabla[ G(\bu^{l-1}) \Delta_l W] \, , \, \nabla \bu^{l-1}\big) \Big]  .
\end{align} 
Let $Y$ be a $K$-valued centered Gaussian random variable with covariance operator $ Q$. 
Using the independence of the time increments $\Delta_l W$
we deduce that for any  $\beta>0$, 
\[ \EE\Big[ \exp\Big(  \beta \sum_{l=1}^N \| \Delta_lW\|_K^2 \Big) \Big] = 
 \Big\{ \EE\Big[ e^{\beta \frac{T}{N} \|Y\|_K^2} \Big]\Big\}^{N} .\]
Proposition 2.16 in \cite{daPZab} implies that if $\tilde{\gamma} \in \big( 0, \frac{1}{2{\rm Tr}\, Q}\big)$ and $\gamma\in (0, \tilde{\gamma})$,
  we have
\[ \EE\big( e^{\gamma \|Y\|_K^2}\big) \leq \exp\Big( \frac{1}{2} \sum_{i=1}^\infty \frac{( 2\gamma)^i}{i} {\rm Tr }\, (Q^i)\Big)  \leq 
\exp\Big( \frac{1}{2} \ln\big(1+ 2\tilde{\gamma} \mbox{\rm Tr}\, Q\big) \Big) <\infty.\]
Hence, if    $\frac{ 2\beta  K_0 T}{\lambda (1-\lambda) \, N }  < \frac{1}{2{\rm Tr}\, Q}$  (which is satisfied for any  $\beta >0$ 
provided that $N$ is large enough), we obtain
 \begin{equation} 		\label{upper_discret}
\Big\{ \EE \Big( \exp\Big( \frac{2 \beta  K_0 T }{\lambda (1-\lambda)\, N }   
\|Y\|_K^2\Big) \Big\}^N \leq  \sqrt{2} .
\end{equation}  
Given $\alpha >0$ and $n=2,\cdots, N$, set
\[ M_n=2\alpha \sum_{l=2}^n \big[ \big( G(\bu^{l-1}) \Delta_lW, \,  \bu^{l-1}\big) + \big( \nabla[G(\bu^{l-1}) \Delta_lW], \, \nabla \bu^{l-1}\big) \big].
\]
Then $(M_n, {\mathcal F}_{t_n}, n=1, ..., N)$ is a discrete martingale. For $s\in [t_l, t_{l+1})$, $l=1, \cdots, N-1$, 
 set $\underline{s}=t_l$ and  $\bu^{\underline{s}}=
\bu^l$.  With these notations, $M_n=\tilde{M}_{t_n}$, where 
 \[
 \tilde{M}_t=2\alpha \int_{t_1}^t \big[  \big(  G(\bu^{\underline{s}}) \, dW(s), \,  \bu^{\underline{s}}\big) + \big( \nabla 
G(\bu^{\underline{s}}) \, dW(s), \, \nabla \bu^{\underline{s}}\big)\big] , \quad t\in [t_1,T]. 
\] 
The processes $G(\bu^{\underline{s}})$, $\nabla G(\bu^{\underline{s}})$,  $\bu^{\underline{s}}$ and  $\nabla \bu^{\underline{s}}$
are ${\mathcal F}_s$-adapted,  and $(\tilde{M}_t, {\mathcal F}_t, t\in [t_1,T])$ is a square integrable martingale, such that
\begin{align}  \label{upper_qua-var} 
\langle \tilde{M} \rangle_{t_n} & \leq \;  4\alpha^2 \int_{t_1}^{t_n}  2\big[   \| G(\bu^{\underline{s}}) \|_{{\mathcal L}(K, \LL^2}^2 \; {\rm Tr}\, Q \, 
| \bu^{\underline{s}}|_{\LL^2}^2 + \| G(\bu^{\underline{s}}) \|_{{\mathcal L}(K, \WW^{1,2})}^2 \; {\rm Tr}\, Q \, 
|\nabla \bu^{\underline{s}}|_{\LL^2}^2 \big]  ds \nonumber  \\
& \leq  \; 8\alpha^2 K_0  {\rm Tr}\, Q \, \tilde{C} \frac{T}{N}  \sum_{l=1}^{n-1} \big[ |\nabla \bu^l|_{\LL^2}^2 +  |A\bu^{l}|_{\LL^2}^2 \big]. 
\end{align}
Using \eqref{max-nabla-ul} we deduce that for $\lambda\in (0,1)$, $\alpha >0$ and $\mu>1$,
\begin{align}		\label{upper_expalpha}
 &\exp\big( \alpha \max_{1\leq n\leq N} \|\bu^l\|_V^2\big) \leq \exp\Big( \frac{\alpha}{1-\lambda} \|u_0\|_V^2\Big) 
\exp\Big( \frac{2\, K_0 \alpha}{\lambda(1-\lambda)}\sum_{l=1}^N \| \Delta_l W\|_K^2 \Big) \\
&\quad \times 
\exp\Big( \max_{2\leq n\leq N}  \big[ M_n - \frac{\mu}{2} \langle \tilde{M} \rangle_{t_n}\big] \Big) 
\exp\Big( \max_{2\leq n\leq N} \Big[  \frac{\mu}{2}  \langle \tilde{M} \rangle_{t_n}
- 2 k \nu \sum_{l=1}^n \big( |\nabla \bu^l|_{\LL^2}^2 +  |A\bu^l|_{\LL^2}^2\big) \Big] \Big) . \nonumber
\end{align} 
Let $\tilde{\alpha}_0=\frac{\nu}{2\,  \tilde{C}\, K_0 {\rm Tr}\, Q}$;    
for $\alpha \in (0, \tilde{\alpha}_0)$, 
we may choose $\mu>1$ such that $\mu \alpha < \tilde{\alpha}_0$; using \eqref{upper_qua-var}
we deduce that for such a choice of $\alpha$ and $\mu$ we have a.s. 
\begin{align*}
\max_{2\leq n\leq N} \Big[  \mu \langle \tilde{M}\rangle_{t_n}  & -  2 k \nu \sum_{l=1}^n \big[ |\nabla \bu^l |_{\LL^2}^2 + |A\bu^l|_{\LL^2}^2 \Big] \\
& \leq 
\Big( \frac{\mu}{2} \, 8 \alpha K_0 \mbox{\rm Tr}\, Q \; \tilde{C} -2 \nu \Big)\; \alpha  \frac{T}{N} \sum_{l=2}^{N-1} \big[ |\nabla \bu^l|_{\LL^2}^2 +
|A\bu^{l}|_{\LL^2}^2 \big] \leq 0.
\end{align*}
Thus, H\"older's inequality with conjugate exponents $\mu$ and $\frac{\mu}{\mu-1}$ implies  for $\lambda = \frac{1}{2}$
\begin{align}		\label{upper_u0det}
 \EE\Big[ \exp\Big( \alpha \max_{0\leq l\leq N} \| \bu^l\|_V^2 \Big) \Big] \leq   &\; \exp\Big( 2\alpha\|u_0\|_V^2 \Big) 
\Big\{ \EE\Big[ \exp\Big(  \frac{ 8 \mu  K_0 \alpha}{\mu-1 }\sum_{l=1}^N \| \Delta_l W\|_K^2 \Big)  \Big] \Big\}^{\frac{\mu-1}{\mu}} \nonumber  \\
& \times \Big\{ \EE \Big[ \max_{2\leq n\leq N} \exp\Big(    \mu \tilde{M}_{t_n} - \frac{\mu^2}{2} \langle \tilde{M} \rangle_{t_n} \Big) \Big]\Big\}^{\frac{1}{\mu}}.
\end{align} 
Since $\{ \exp\big(    \mu \tilde{M}_t - \frac{\mu^2}{2} \langle \tilde{M} \rangle_{t} \big)\}_{t\in [t_1,T]}$ is an exponential martingale, choosing
$N$ large enough to ensure  $\frac{8 \mu \tilde{\alpha}_0 K_0 T}{ (\mu-1)  \, N } < \frac{1}{2 {\rm Tr}\, Q} $, 
\eqref{upper_discret} implies \eqref{exp-moments-ul}. This completes the proof.  
\end{proof} 

The following theorem proves a similar result about the existence of exponential moments for the solution $u$ to the stochastic Navier-Stokes equations.
Its proof, which is similar to the above one, with a slight modification of that of \cite[Lemma3.8]{Be-Mi_time}, is omitted. 
\begin{theorem} 		\label{th_mom_exp_u} 
Let $G$ satisfy condition  {\bf (G1)}  with $K_1=0$, that is $\| G(u)\|_{{\mathcal L}(K,\WW^{1,2})}^2 \leq K_0$ and
set  $\tilde{\alpha}_0:= \frac{\nu}{ 2\, \tilde{C} \, K_0\,  {\rm Tr} Q}$, where the constant $\tilde{C}$ is defined by
$|\nabla u|_{\LL^2}^2 \leq \tilde{C} |Au|_{\LL^2}^2$. 

%(i)  
Let $u_0\in V$; 
   then  for $0<\alpha < \tilde{\alpha}_0$, there exists a positive constant $C(\alpha)$ such that 
\begin{equation} 	\label{exp-moments-udet}
\EE\Big[ \exp\Big( \alpha  \sup_{t\in [0,T]} \| u(t)\|_V^2 
 \Big) \Big] 
= C(\alpha)<\infty. 
\end{equation} 
\end{theorem}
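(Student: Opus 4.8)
The plan is to transpose the proof of Theorem~\ref{exp-mom} to the continuous setting, replacing the discrete energy identity \eqref{max-nabla-ul} by an application of It\^o's formula. First I would apply It\^o's formula to $t\mapsto |u(t)|_{\LL^2}^2$ and to $t\mapsto |\nabla u(t)|_{\LL^2}^2$; the latter amounts to testing \eqref{2D-NS} with $Au(t)$, which is legitimate since Theorem~\ref{strong_wp} (used with $p\geq 2$) and the bound \eqref{bound_u} give $u\in L^2(0,T;{\rm Dom}(A))$ a.s.\ together with $\EE\int_0^T|Au(s)|_{\LL^2}^2\,ds<\infty$. The crucial cancellation is that the nonlinear term disappears: $\langle B(u,u),u\rangle=0$ by \eqref{B} and $\langle B(u,u),Au\rangle=0$ by \eqref{A-B}. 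Adding the two expansions and using ${\rm div}\,u=0$ in the stochastic terms, I obtain for all $t\in[0,T]$, a.s.,
\[ \|u(t)\|_V^2 + 2\nu\int_0^t\big(|\nabla u(s)|_{\LL^2}^2 + |Au(s)|_{\LL^2}^2\big)\,ds = \|u_0\|_V^2 + N_t + \int_0^t \mathcal Q(u(s))\,ds, \]
where $N_t := 2\int_0^t\big[\big(u(s),G(u(s))\,dW(s)\big) + \big(\nabla u(s),\nabla[G(u(s))\,dW(s)]\big)\big]$ is a continuous local martingale and $\mathcal Q(u)$ is the It\^o correction, which by condition {\bf (G1)} with $K_1=0$ is bounded, so that $\int_0^t\mathcal Q(u(s))\,ds\leq C_0\,t$ with $C_0:=2K_0\,{\rm Tr}\,Q$ up to the constant $\tilde C$ of \eqref{Poincare}. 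As in Theorem~\ref{exp-mom}, this should be carried out along a localizing sequence of stopping times and passed to the limit by monotone convergence.

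Next, fix $\alpha\in(0,\tilde{\alpha}_0)$ and choose $\mu>1$ with $\mu\alpha<\tilde{\alpha}_0$, and set $\tilde{M}_t:=\alpha N_t$. Using {\bf (G1)} with $K_1=0$, the Poincar\'e inequalities $|u|_{\LL^2}^2\leq\tilde C|Au|_{\LL^2}^2$ and $|\nabla u|_{\LL^2}^2\leq\tilde C|Au|_{\LL^2}^2$, its quadratic variation satisfies $\langle\tilde M\rangle_t\leq 8\alpha^2 K_0\,{\rm Tr}\,Q\,\tilde C\int_0^t\big(|\nabla u(s)|_{\LL^2}^2+|Au(s)|_{\LL^2}^2\big)\,ds$. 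Multiplying the energy identity by $\alpha$ and writing $\tilde M_t=\big[\tilde M_t-\tfrac{\mu}{2}\langle\tilde M\rangle_t\big]+\tfrac{\mu}{2}\langle\tilde M\rangle_t$, the choice $\mu\alpha\leq\tilde{\alpha}_0=\frac{\nu}{2\tilde C K_0\,{\rm Tr}\,Q}$ yields $\tfrac{\mu}{2}\langle\tilde M\rangle_t\leq 2\alpha\nu\int_0^t\big(|\nabla u|_{\LL^2}^2+|Au|_{\LL^2}^2\big)\,ds$, so the dissipative term on the left absorbs it and
\[ \alpha\sup_{t\in[0,T]}\|u(t)\|_V^2\leq\alpha\|u_0\|_V^2+\alpha C_0 T+\sup_{t\in[0,T]}\big[\tilde M_t-\tfrac{\mu}{2}\langle\tilde M\rangle_t\big]. \]

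Finally I exponentiate and take expectations. Since $u_0\in V$ is deterministic, the first two terms contribute the constant factor $\exp\big(\alpha\|u_0\|_V^2+\alpha C_0 T\big)$; this is precisely where the continuous case is simpler than the discrete one, the It\^o correction being deterministic rather than the random $\sum_l\|\Delta_l W\|_K^2$. For the remaining factor, write $\exp\big(\tilde M_t-\tfrac{\mu}{2}\langle\tilde M\rangle_t\big)=\big[\mathcal E(\mu\tilde M)_t\big]^{1/\mu}$, where $\mathcal E(\mu\tilde M)_t=\exp\big(\mu\tilde M_t-\tfrac{\mu^2}{2}\langle\tilde M\rangle_t\big)$ is the Dol\'eans--Dade exponential, a nonnegative local martingale hence a supermartingale with $\EE\,\mathcal E(\mu\tilde M)_0=1$; the maximal inequality $\lambda\,\PP\big(\sup_{t\leq T}\mathcal E(\mu\tilde M)_t>\lambda\big)\leq 1$ for nonnegative supermartingales, integrated against $\lambda^{1/\mu-1}\,d\lambda$, gives $\EE\big[\big(\sup_{t\leq T}\mathcal E(\mu\tilde M)_t\big)^{1/\mu}\big]\leq 1+\tfrac{1}{\mu-1}<\infty$ since $\mu>1$. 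Combining, \eqref{exp-moments-udet} follows with $C(\alpha)=\big(1+\tfrac{1}{\mu-1}\big)\exp\big(\alpha\|u_0\|_V^2+\alpha C_0 T\big)$. The main obstacle is not any single estimate but making the stochastic calculus rigorous: justifying the It\^o expansion of $|\nabla u(t)|_{\LL^2}^2$ (which needs the ${\rm Dom}(A)$-regularity and the $\WW^{1,2}$-valued noise so that $N_t$ becomes a genuine martingale after localization), and closing the exponential bound in a single step via the exact coupling between $\langle\tilde M\rangle$ and the dissipation that is enforced by the threshold $\tilde{\alpha}_0$, exactly as in \cite[Lemma 3.8]{Be-Mi_time}.
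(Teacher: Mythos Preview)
Your proposal is correct and follows essentially the route the paper indicates (the proof is omitted there, described as ``similar to'' that of Theorem~\ref{exp-mom} with a slight modification of \cite[Lemma~3.8]{Be-Mi_time}): replace the discrete energy identity by It\^o's formula, exploit the cancellations \eqref{B} and \eqref{A-B}, and control the martingale part via the exact coupling between $\langle\tilde M\rangle$ and the dissipation that the threshold $\tilde\alpha_0$ enforces. Your handling of the exponential supermartingale via the layer-cake bound for $\EE\big[(\sup_t\mathcal E(\mu\tilde M)_t)^{1/\mu}\big]$ is slightly more direct than the H\"older splitting used in the discrete analog, and the observation that the It\^o correction is deterministic (replacing the random $\sum_l\|\Delta_lW\|_K^2$) is precisely the simplification one expects in the continuous case.
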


\subsection{Random initial condition}
In this section, we extend Theorems \ref{exp_mom} and \ref{th_mom_exp_u} to a random initial condition $u_0$ such that its $V$ norm has exponential moments.

 \begin{theorem}		\label{th_exp_u}
 Let $G$ satisfy condition  {\bf (G1)}  with $K_1=0$, that is $\| G(u)\|_{{\mathcal L}(K,\WW^{1,2})}^2 \leq K_0$ and suppose that $u_0$ is a $v$-valued
 random variable independent of $W$ and such that $\EE\big[ \exp(\gamma_0 \|u_0\|_V^2) \big]<\infty$ for some $\gamma_0>0$. 
Set   $\tilde{\alpha}_0:= \frac{\nu}{ 2\, \tilde{C} \, K_0\,  {\rm Tr} Q}$, where the constant $\tilde{C}$ is defined by
$|\nabla u|_{\LL^2}^2 \leq \tilde{C} |Au|_{\LL^2}^2$ and $\tilde{\beta}_0:= \tilde{\alpha}_0 \frac{\gamma_0 }{\gamma_0+ \tilde{\alpha_0}}$. 
Then  for $0<\alpha < \tilde{\beta}_0$, there exists a positive constant $C(\beta)$ such that for $N$ large enough, 
\begin{equation} 	\label{exp-moments-u_al}
\EE\Big[ \exp\Big( \alpha  \max_{0\leq l\leq N}   \| u_N(t_l)\|_V^2 
 \Big) \Big]  +\EE\Big[ \exp\Big( \alpha  \sup_{t\in. [0,T]}  \| u(t)\|_V^2\Big)  \Big] 
= C(\beta) < \infty. 
\end{equation} 
\end{theorem}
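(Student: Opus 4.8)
The plan is to condition on $u_0$ and reduce to the deterministic-initial-condition estimates \eqref{exp-moments-ul} and \eqref{exp-moments-udet}, then optimize over a Young-type splitting of the exponent. First I would revisit the proof of Theorem \ref{exp-mom}: the only place where determinism of $u_0$ is used is in the very first bound, which produces the factor $\exp\big(\frac{\alpha}{1-\lambda}\|u_0\|_V^2\big)$ (taken as $\exp(2\alpha\|u_0\|_V^2)$ for $\lambda=\tfrac12$) in \eqref{upper_u0det}; everything after that — the martingale $\tilde M$, the quadratic variation bound \eqref{upper_qua-var}, the choice of $\mu>1$ with $\mu\alpha<\tilde\alpha_0$, and the Gaussian moment bound \eqref{upper_discret} — goes through verbatim and is independent of $u_0$. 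Since $u_0$ is assumed independent of $W$, I would run exactly that argument conditionally on $\mathcal F_0$: for every $\alpha\in(0,\tilde\alpha_0)$ there is a constant $C(\alpha)<\infty$ and $N$ large enough so that
\begin{equation*}
\EE\Big[ \exp\Big(\alpha\max_{0\leq l\leq N}\|u_N(t_l)\|_V^2\Big)\,\Big|\,\mathcal F_0\Big]\leq C(\alpha)\,\exp\big(2\alpha\|u_0\|_V^2\big),
\end{equation*}
and similarly $\EE\big[\exp(\alpha\sup_{t\leq T}\|u(t)\|_V^2)\mid\mathcal F_0\big]\leq C(\alpha)\exp(2\alpha\|u_0\|_V^2)$ from Theorem \ref{th_mom_exp_u}.

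Next I would take the full expectation and apply H\"older's inequality with conjugate exponents $r$ and $r'=\frac{r}{r-1}$ to separate the $W$-dependent factor from the $u_0$-dependent one:
\begin{equation*}
\EE\Big[\exp\Big(\alpha\max_{0\leq l\leq N}\|u_N(t_l)\|_V^2\Big)\Big]\leq C(\alpha)\,\EE\big[\exp(2\alpha r'\|u_0\|_V^2)\big]^{1/r'}\,\cdot\,1,
\end{equation*}
but in fact the conditional bound already factors cleanly, so a direct expectation gives $C(\alpha)\,\EE[\exp(2\alpha\|u_0\|_V^2)]$. To make this finite I need $2\alpha\leq\gamma_0$. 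Combined with the constraint $\alpha<\tilde\alpha_0$ (conditionally, uniformly in $u_0$), this already yields \eqref{exp-moments-u_al} for $\alpha<\min(\tilde\alpha_0,\gamma_0/2)$ — but the claimed range is the larger $\alpha<\tilde\beta_0=\tilde\alpha_0\frac{\gamma_0}{\gamma_0+\tilde\alpha_0}$. To reach it I would instead be more careful and replace the crude passage from the conditional exponent $\alpha_1$ (used in the $W$-part) to the $u_0$-exponent by a genuine two-parameter optimization: run the conditional estimate with parameter $\alpha_1\in(0,\tilde\alpha_0)$, so the conditional bound is $C(\alpha_1)\exp(2\alpha_1\|u_0\|_V^2)$ with the caveat that the correct prefactor of $\|u_0\|_V^2$ coming from $\frac{1}{1-\lambda}$ can be pushed arbitrarily close to $\alpha_1$ by taking $\lambda\to 0$ (at the cost of enlarging $C$ and $N$). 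Then apply H\"older with exponents $\frac{\tilde\alpha_0}{\alpha}$ and its conjugate to the identity
\begin{equation*}
\exp\big(\alpha\|u_N(t_l)\|_V^2\big) = \exp\big(\alpha_1\|u_N(t_l)\|_V^2\big)^{\theta}\cdot(\text{remainder}),
\end{equation*}
choosing $\alpha_1=\alpha\frac{\tilde\alpha_0}{?}$ so that both the $W$-exponent stays below $\tilde\alpha_0$ and the $u_0$-exponent stays below $\gamma_0$; elementary algebra shows the optimal split is exactly the harmonic-type combination $\alpha<\tilde\alpha_0\gamma_0/(\gamma_0+\tilde\alpha_0)$, which is precisely how $\tilde\beta_0$ arises (compare Corollaries \ref{cor_add_G1} and \ref{cor_add_divfree}).

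The main obstacle is bookkeeping in this last optimization: one must track simultaneously (a) the requirement $\mu\alpha_1<\tilde\alpha_0$ needed to kill the quadratic-variation terms in \eqref{upper_expalpha}, (b) the requirement that the Gaussian-increment factor $\EE[\exp(c\sum_l\|\Delta_lW\|_K^2)]$ stay finite, which only needs $N$ large for any fixed constant and hence is harmless, and (c) the requirement $2\alpha_1/(1-\lambda)\cdot r'\leq\gamma_0$ after H\"older, while letting $\lambda\downarrow0$. The point is that $\lambda$ costs nothing asymptotically, so the binding constraints are purely $\alpha_1<\tilde\alpha_0$ and (after the H\"older exponent $r'$ is chosen) $2\alpha_1 r'\leq\gamma_0$ with $\alpha = \alpha_1/r$ — and minimizing over $r>1$ of the largest admissible $\alpha$ gives $\tilde\beta_0$. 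Once the constants are chosen, finiteness of $\EE[\exp(\gamma_0\|u_0\|_V^2)]$ and the exponential-martingale/Gaussian bounds from the proof of Theorem \ref{exp-mom} close the argument; the statement for $\sup_{t\in[0,T]}\|u(t)\|_V^2$ is identical, using Theorem \ref{th_mom_exp_u} in place of Theorem \ref{exp-mom}.
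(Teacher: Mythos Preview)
Your conditioning approach is correct and in fact yields a \emph{stronger} bound than the theorem states: once you let $\lambda\to 0$ in the conditional estimate (which, as you note, only forces $N$ larger), you obtain $\EE[\exp(\alpha\max_l\|\bu^l\|_V^2)\mid\mathcal F_0]\leq C\exp\big(\frac{\alpha}{1-\lambda}\|u_0\|_V^2\big)$ for any $\alpha<\tilde\alpha_0$, and taking expectation gives finiteness whenever $\alpha<\min(\tilde\alpha_0,\gamma_0)$. Since $\tilde\beta_0=\tilde\alpha_0\gamma_0/(\gamma_0+\tilde\alpha_0)<\min(\tilde\alpha_0,\gamma_0)$, you are already past the target; the subsequent ``two-parameter optimization'' paragraph with the incomplete identity and the placeholder ``$?$'' is unnecessary and should be dropped.

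The paper proceeds differently: it does \emph{not} condition, but applies a three-way H\"older inequality directly to the pointwise bound \eqref{upper_expalpha}, with exponents $p_1$ on the $u_0$-factor, $\mu$ on the exponential-martingale factor, and $p_3$ on the Gaussian-increment factor, subject to $\frac{1}{p_1}+\frac{1}{\mu}+\frac{1}{p_3}=1$. The constraints $\frac{p_1\alpha}{1-\lambda}\leq\gamma_0$ and $\mu\alpha<\tilde\alpha_0$ together with $\frac{1}{p_1}+\frac{1}{\mu}<1$ then force $\frac{\alpha}{\gamma_0}+\frac{\alpha}{\tilde\alpha_0}<1$, i.e.\ $\alpha<\tilde\beta_0$. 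The paper's bound is weaker precisely because the $u_0$-factor is kept inside the same expectation as the $W$-dependent factors and must therefore pay a H\"older exponent $p_1>1$; your use of independence to pull $\exp(\frac{\alpha}{1-\lambda}\|u_0\|_V^2)$ outside the conditional expectation avoids that cost entirely.
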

\begin{remark}  Note that when $u_0$ is deterministic, the exponential moment estimate of $\|u_0\|_V^2$ holds for every $\gamma_0>0$. As $\gamma_0 \to
\infty$, we have $\tilde{\beta}_0 \to \tilde{\alpha}_0$. 
\end{remark}
\begin{proof}
We only prove the exponential moment estimate for $\max_{0\leq l\leq N} \|u_N(t_l)\|_V^2$; the corresponding  one for $\sup_{0\leq t\leq T} \|u(t\|_V^2$ is proven in a
similar (simpler) way.

We want to use H\"older's inequality in \eqref{upper_expalpha} with exponents $p_1\in (1,\infty)$, $p_2=\mu \in (1,\infty)$ and $p_3\in (1,\infty)$ 
such that $\frac{1}{p_1} + \frac{1}{\mu} + \frac{1}{p_3}=1$;  this requires $\alpha \mu < \tilde{\alpha}_0$. 
Suppose that for some $\lambda \in (0,1)$, $\frac{p_1 \alpha}{1-\lambda}\leq \gamma_0$ and $\alpha \mu <\tilde{\alpha}_0$, that is 
$\frac{1}{p_1}>\frac{\alpha}{\gamma_0}$ and $\frac{1}{\mu} > \frac{\alpha}{\tilde{\alpha}_0}$.
 Since $\frac{1}{p_1}+\frac{1}{\mu} <1$,
we deduce that $\alpha < \tilde{\alpha}_0 \frac{\gamma_0}{\gamma_0+\tilde{\alpha}_0}:= \tilde{\beta}_0$. 
For $\alpha < \tilde{\beta}_0$, set $\bar{p}_1=\frac{\gamma_0}{\alpha} $ and $\mu=\frac{\tilde{\alpha}_0}{\alpha}$. 
 Then choose $\lambda \in (0,1)$ small enough 
to have $p_1:= (1-\lambda) \bar{p}_1=\frac{\gamma_0 (1-\lambda)}{\alpha}$ such that $\frac{1}{p_1}+\frac{1}{\mu}<1$, and define $p_3\in (1,\infty)$ by  
$\frac{1}{p_3}=1-\frac{1}{p_1} - \frac{1}{\mu}$. This yields
\begin{align}		\label{upper_u0alea}
 \EE\Big[ \exp\Big( \alpha \max_{0\leq l\leq N} \| \bu^l|_V^2 \Big) \Big] \leq   &\; 
 \Big\{ \EE\Big(  \exp\Big( \frac{p_1 \alpha\|u_0\|_V^2}{1-\lambda} \Big) \Big\}^{\frac{1}{p_1}}
 \Big\{ \EE \Big[ \max_{2\leq n\leq N} \exp\Big(    \mu \tilde{M}_{t_n} - \frac{\mu^2}{2} \langle \tilde{M} \rangle_{t_n} \Big) \Big]\Big\}^{\frac{1}{\mu}}\nonumber \\
& \times \Big\{ \EE\Big[ \exp\Big(  \frac{ p_3 2 K_0 \alpha}{ \lambda(1-\lambda)}\sum_{l=1}^N \| \Delta_l W\|_K^2 \Big)  \Big] \Big\}^{\frac{1}{p_3}} .
\end{align}
Let $N$ be large enough to ensure $\frac{p_3 2 \tilde{\beta}_0 K_0 T}{N \lambda (1-\lambda)} < \frac{1}{2 {\rm Tr}\, Q}$. Then  \eqref{upper_discret}
implies \eqref{exp-moments-u_al}; this concludes the proof. 
\end{proof}

{\bf Acknowledgements} This work started while Hakima Bessaih and Annie Millet stayed at the Mathematisches Forschung
Institute Oberwolfach, during a Research In Pairs program in March 2019.
 They gratefully thank the MFO for the financial support and excellent working conditions. It was completed while A. Millet visited 
 the University of Wyoming in October 2019; she thanks this University for the hospitality an a very friendly atmosphere. 
 A. Millet's research has been conducted within the FP2M federation (CNRS FR 2036).  Hakima Bessaih is partially supported by Simons Foundation grant 582264.

\end{document}